\newtheorem{thm}{Theorem}[section]
\newtheorem{lem}[thm]{Lemma}
\newtheorem{qst}[thm]{Question}
\newtheorem{prop}[thm]{Proposition}
\newtheorem{cor}[thm]{Corollary}
\theoremstyle{definition}
\newtheorem{df}[thm]{Definition}
\newtheorem{rk}[thm]{Remark}
\newtheorem{ex}[thm]{Example}
\newtheorem{nt}[thm]{Standard Notation/Terminology}
\newtheorem*{mainthm}{Theorem}
\newtheorem*{mainprop}{Proposition}
\begin{document}

\title{Ideal Whitehead Graphs in $Out(F_r)$ I: Some Unachieved Graphs}
\author{Catherine Pfaff}
\date{}
\maketitle

\abstract{In \cite{ms93}, Masur and Smillie proved precisely which singularity index lists arise from pseudo-Anosov mapping classes.  In search of an analogous theorem for outer automorphisms of free groups, Handel and Mosher ask in \cite{hm11}: Is each connected, simplicial, ($2r-1$)-vertex graph the ideal Whitehead graph of a fully irreducible $\phi \in Out(F_r)$?  We answer this question in the negative by exhibiting, for each $r$, examples of connected (2r-1)-vertex graphs that are not the ideal Whitehead graph of any fully irreducible $\phi \in Out(F_r)$.  In the course of our proof we also develop machinery used in \cite{p12a} to fully answer the question in the rank-three case.}

\section{Introduction}

For a compact surface $S$, the \emph{mapping class group $\mathcal{MCG}(S)$} is the group of isotopy classes of homeomorphisms $h \colon S\to S$. A generic (see, for example, \cite{m11}) mapping class is \emph{pseudo-Anosov}, i.e. has a representative leaving invariant a pair of transverse measured singular minimal foliations. From the foliation comes a singularity index list. Masur and Smillie determined precisely which singularity index lists, permitted by the Poincare-Hopf index formula, arise from pseudo-Anosovs \cite{ms93}. The search for an analogous theorem in the setting of an outer automorphism group of a free group is still open.

We let $Out(F_r)$ denote the outer automorphism group of the free group of rank r. Analogous to pseudo-Anosov mapping classes are fully irreducible outer automorphisms, i.e. those such that no power leaves invariant the conjugacy class of a proper free factor. In fact, some fully irreducible outer automorphisms, called \emph{geometrics}, are induced by pseudo-Anosovs. The index lists of geometrics are understood through the Masur-Smillie index theorem.

In \cite{gjll}, Gaboriau, Jaeger, Levitt, and Lustig defined singularity indices for fully irreducible outer automorphisms. Additionally, they proved an $Out(F_r)$-analogue to the Poincare-Hopf index equality, namely the index sum inequality $i(\phi) \geq 1-r$ for a fully irreducible $\phi \in Out(F_r)$.

Having an inequality, instead of just an equality, makes the search for an analogue to the Masur-Smillie theorem richly more complicated. Toward this goal, Handel and Mosher asked in \cite{hm11}:

\begin{qst}{\label{Q:Q1}}
Which index types, satisfying $i(\phi) > 1-r$, are achieved by nongeometric fully irreducible $\phi \in Out(F_r)$?
\end{qst}

There are several results on related questions. For example, \cite{jl09} gives examples of automorphisms with the maximal number of fixed points on $\partial F_r$, as dictated by a related inequality in \cite{gjll}. However, our work focuses on an $Out(F_r)$-version of the Masur-Smillie theorem. Hence, in this paper, in \cite{p12c}, and in \cite{p12d} we restrict attention to fully irreducibles and the \cite{gjll} index inequality.

Beyond the existence of an inequality, instead of just an equality, ``ideal Whitehead graphs'' give yet another layer of complexity for fully irreducibles. An ideal Whitehead graph describes the structure of singular leaves, in analogue to the boundary curves of principle regions in Nielsen theory \cite{n86}. In the surface case, ideal Whitehead graphs are all circles. However, the ideal Whitehead graph $\mathcal{IW}(\phi)$ for a fully irreducible $\phi \in Out(F_r)$ (see \cite{hm11} or Definition \ref{D:whiteheadgraphs} below) gives a strictly finer outer automorphism invariant than just the corresponding index list. Indeed, each connected component $C_i$ of $\mathcal{IW}(\phi)$ contributes the index $1-\frac{k_i}{2}$ to the list, where $C_i$ has $k_i$ vertices. One can see many complicated ideal Whitehead graph examples, including complete graphs in every rank (in \cite{p12c}) and in the eighteen of the twenty-one connected, five-vertex graphs achieved by fully irreducibles in rank-three (\cite{p12d}). The deeper, more appropriate question is thus:

\begin{qst}{\label{Q:Q2}}
Which isomorphism types of graphs occur as the ideal Whitehead graph $\mathcal{IW}(\phi)$ of a fully irreducible outer automorphism $\phi$?
\end{qst}

\cite{p12d} will give a complete answer to Question \ref{Q:Q2} in rank 3 for the single-element index list $(-\frac{3}{2})$. In Theorem \ref{T:MainTheorem} of this paper we provide examples in each rank of connected (2r-1)-vertex graphs that are not the ideal Whitehead graph $\mathcal{IW}(\phi)$ for any fully irreducible $\phi \in Out(F_r)$, i.e. that are \emph{unachieved} in rank r:

\begin{mainthm}  \emph{For each $r \geq 3$, let $\mathcal{G}_r$ be the graph consisting of $2r-2$ edges adjoined at a single vertex.}
~\\
\vspace{-\baselineskip}
\begin{description}
\item [A.] \emph{For no fully irreducible $\phi \in Out(F_r)$ is $\mathcal{IW}(\phi) \cong \mathcal{G}_r$.} \\[-5.5mm]
\item [B.] \emph{The following connected graphs are not the ideal Whitehead graph $\mathcal{IW}(\phi)$ for any fully irreducible $\phi \in Out(F_3)$:} \\[-6mm]

\centering
\includegraphics[width=2.6in]{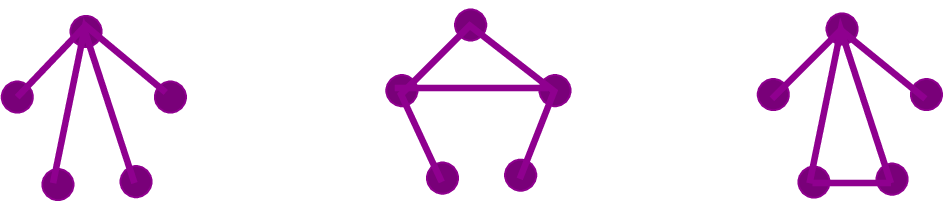}
\end{description}
\end{mainthm}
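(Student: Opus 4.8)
The plan is to convert the hypothesis into the combinatorics of a train track representative on the rose and then to extract a contradiction. Since $\mathcal{G}_r$ and each graph in (B) has $2r-1$ vertices, which is the maximum possible for an ideal Whitehead graph in $Out(F_r)$, a fully irreducible $\phi$ with $\mathcal{IW}(\phi)$ isomorphic to such a graph has index $\frac{3}{2}-r$ and in particular is ageometric. Passing to a rotationless power (which does not change the ideal Whitehead graph), Bestvina--Handel and Handel--Mosher theory supplies a PNP-free (``clean'') affine train track representative $g \colon R_r \to R_r$ on the rose whose single vertex $v$ has exactly $2r-1$ gates --- one of size two and $2r-2$ singletons. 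Hence the direction map $Dg$ fixes $2r-1$ of the $2r$ directions at $v$ and collapses the remaining ``exceptional'' direction $d_\infty$ onto its gate-mate $d_0$. In this situation $\mathcal{IW}(\phi)$ is the stable Whitehead graph $SW(v)$, which is the local Whitehead graph $LW(v)$ on the $2r$ directions with $d_\infty$ deleted, and a birecurrency requirement forces $LW(v)$ to be connected, so in particular $d_\infty$ has at least one neighbor in $LW(v)$.

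The main tool I would build --- and the part of the paper reused in \cite{p12a} --- is an \emph{ideal decomposition} of $g$: a factorization $g = g_n \circ \cdots \circ g_1$ as a cyclic sequence of single ``proper full folds'' through roses $R_r = \Gamma_0, \Gamma_1, \dots, \Gamma_n = R_r$. Attached to each $\Gamma_i$ is a local-Whitehead-graph datum, and crossing a fold changes it by a controlled rule --- pull back along the fold's direction map, then adjoin one new edge --- so that, since the composition closes up, the isomorphism type of $\mathcal{IW}(\phi)$ is forced to be an invariant of the whole loop of transformations. In rank $3$ this loop has bounded length, so the admissible fold sequences, and the graphs they output, can be enumerated.

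For Part A the key observation is that in the star $\mathcal{G}_r$ every edge meets the central vertex, so every turn taken by the attracting lamination meets the central direction $X$ or the exceptional direction $d_\infty$. Feeding this into the images $g(e_i)$ --- after passing to a power so that each $g(e_i)$ has length at least two --- one sees that, provided $d_\infty$ is not the partner direction $\bar X$ of the center, every direction occurring in any $g(e_i)$ lies among the four directions of the two edges of $R_r$ carrying $X$ and $d_\infty$; thus the image of $g$ lies in a rank-$\le 2$ subgraph, contradicting that $g$ is a homotopy equivalence once $r \ge 3$. The remaining configuration $d_\infty = \bar X$ is precisely where this counting collapses, and I would eliminate it using the ideal decomposition: the turn restrictions it would impose around the fold loop are incompatible with $g$ being expanding, irreducible and PNP-free in rank $\ge 3$ (such a configuration amounts to a periodic Nielsen path or to a geometric/parageometric structure, either of which is excluded here). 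For Part B, the same machinery specialized to rank $3$ reduces the statement to verifying that neither of the two displayed $5$-vertex graphs is produced by any of the finitely many admissible cyclic fold sequences.

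I expect the ideal-decomposition machinery to be the main obstacle: proving that the maximal-index hypothesis really does force a clean rose representative with the $2r-1$-gate structure, that its fold factorization is cyclic, and --- most delicately --- formulating and verifying the exact transformation rule for the Whitehead-graph datum across a single fold. Granting that, the star is excluded uniformly by the short surjectivity argument above, and the Part~B graphs by a finite (though involved) inspection of fold sequences.
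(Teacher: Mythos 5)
Your overall strategy for Part~A differs from the paper's, and the step that drives it has a genuine gap. You correctly observe that every edge of the star $\mathcal{G}_r$ meets the central vertex $X$, so every turn in $\mathcal{LW}(g)$ contains $X$ or the red vertex $d_\infty$, and you want to conclude that (when $d_\infty\neq\bar X$) every direction in any $g(e_i)$ lies among $\{X,\bar X,d_\infty,\overline{d_\infty}\}$, hence the image is rank $\le 2$. That conclusion does not follow. The turn constraint says that for a subpath $f_{j-1}f_jf_{j+1}$ with $f_j$ carried by an edge other than $E_X,E_\infty$, one has $D_0(\overline{f_{j-1}})\in\{X,d_\infty\}$ and $D_0(f_{j+1})\in\{X,d_\infty\}$; it does \emph{not} preclude $f_j$ itself. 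Concretely in rank $3$, take $X=a$, $d_\infty=c$, so the star has vertices $\{a,\bar a,b,\bar b,\bar c\}$ and edges $[a,\bar a],[a,b],[a,\bar b],[a,\bar c]$, with a red edge $[c,b]$. Then $\bar a\,b\,a$ takes only the allowed turns $\{a,b\}$ and $\{\bar b,a\}$, so the ``other'' edge $b$ can occur in $g(e_i)$ and the image need not lie in a rank-$2$ subgraph. What \emph{is} forced by these turn constraints is that a smooth line rapidly becomes trapped (e.g.\ any direction $\bar a$ or $a$ can only be followed by itself here), which is exactly what Proposition~\ref{P:BC} detects: the paper's Part~A proof enumerates the two possible $(r;(\frac{3}{2}-r))$ ltt structures for $\mathcal{G}_r$ up to EPP-isomorphism and shows directly that neither is birecurrent, because any smooth line hitting the black edge $[v_1,\overline{v_1}]$ is forced into a short cycle that omits the other black edges. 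So your intended contradiction (surjectivity failing) needs to be replaced by a trap/recurrence contradiction, which is precisely the birecurrency argument.

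Two smaller points. First, you split off the case $d_\infty=\bar X$ for separate treatment via the fold decomposition, but the paper handles both configurations uniformly by the birecurrency criterion, and the description of the excluded case as amounting to a pNp or (para)geometric structure would itself need justification. Second, your Part~B plan --- enumerate admissible cyclic fold sequences in rank $3$ --- is indeed the paper's strategy in spirit, but the paper makes it precise and finite via the ideal decomposition diagrams $\mathcal{ID}(\mathcal{G})$ and the Irreducibility Potential Test (Corollary~\ref{C:ReferenceLoop}): one checks that in no strongly connected component do all three edge pairs occur as red-vertex labels. As written, your Part~B is too coarse to evaluate; granting the $\mathcal{ID}$-diagram machinery, that is where the finite check actually lives. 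Your reduction to a rotationless, pNp-free representative on the rose with $2r-1$ gates, and to a cyclic factorization into proper full folds, matches the paper's Proposition~\ref{P:IdealDecomposition} and is the correct setup.
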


Nongeometric fully irreducible outer automorphisms are either ``ageometric'' or ``parageometric,'' as defined by Lustig. Ageometric outer automorphisms are our focus, since the index sum for a parageometric, as is true for a geometric, satisfies the Poincare-Hopf equality \cite{gjll}. Parageometrics have been studied in papers including \cite{hm07}. In \cite{bf94}, Bestvina and Feighn prove the \cite{gjll} index inequality is strict for ageometrics.

For a fully irreducible $\phi \in Out(F_r)$, to have the index list $(\frac{3}{2}-r)$, $\phi$ must be ageometric with a connected, (2r-1)-vertex ideal Whitehead graph $\mathcal{IW}(\phi)$. We chose to focus on the single-element index list $(\frac{3}{2}-r)$ because it is the closest to that achieved by geometrics, without being achieved by a geometric. We denote the set of connected (2r-1)-vertex, simplicial graphs by $\mathcal{PI}_{(r; (\frac{3}{2}-r))}$.

One often studies outer automorphisms via geometric representatives. Let $R_r$ be the $r$-petaled rose, with its fundamental group identified with $F_r$. For a finite graph $\Gamma$ with no valence-one vertices, a homotopy equivalence $R_r \to \Gamma$ is called a \emph{marking}. Such a graph $\Gamma$, together with its marking $R_r \to \Gamma$, is called a \emph{marked graph}. Each $\phi \in Out(F_r)$ can be represented by a homotopy equivalence $g\colon \Gamma \to \Gamma$ of a marked graph ($\phi= g_{*}\colon \pi_1(\Gamma) \to \pi_1(\Gamma)$). Thurston defined such a homotopy equivalence to be a \emph{train track map} when $g^k$ is locally injective on edge interiors for each $k>0$. When $g$ induces $\phi \in Out(F_r)$ and sends vertices to vertices, one says $g$ is a \emph{train track (tt) representative} for $\phi$ \cite{bh92}.

To prove Theorem \ref{T:MainTheorem}A, we give a necessary \emph{Birecurrency Condition} (Proposition \ref{P:BC}) on ``lamination train track structures.'' For a train track representative $g \colon \Gamma \to \Gamma$ on a marked rose, we define a \emph{lamination train track (ltt) Structure} \emph{$G(g)$} obtainable from $\Gamma$ by replacing the vertex $v$ with the ``local Whitehead graph'' $\mathcal{LW}(g; v)$. The local Whitehead graph encodes how lamination leaves enter and exit $v$. In our circumstance, $\mathcal{IW}(\phi)$ will be a subgraph of $\mathcal{LW}(g; v)$, hence of $G(g)$.

The lamination train track structure $G(g)$ is given a smooth structure so that leaves of the expanding lamination are realized as locally smoothly embedded lines. It is called \emph{birecurrent} if it has a locally smoothly embedded line crossing each edge infinitely many times as $\bold{R}\to \infty$ and as $\bold{R}\to -\infty$.

\begin{mainprop} \textbf{(Birecurrency Condition)} \emph{The lamination train track structure for each train track representative of each fully irreducible outer automorphism $\phi \in Out(F_r)$ is birecurrent.} \end{mainprop}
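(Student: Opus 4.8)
The plan is to exploit the fact that leaves of the attracting lamination $\Lambda_\phi$ of a fully irreducible $\phi$ are precisely the lines carried by the train track structure, and that $\Lambda_\phi$ is, by general theory of attracting laminations for fully irreducibles (as in Bestvina--Feighn--Handel), minimal and both recurrent and "co-recurrent" — i.e. a generic leaf, read in either direction, returns to every finite subword infinitely often. First I would make precise the identification of $G(g)$ with a train track structure on $\Gamma$: a locally smoothly embedded line in $G(g)$ corresponds exactly to a bi-infinite path in $\Gamma$ all of whose length-two subpaths are taken (via the edge-pair-to-vertex-of-$\mathcal{LW}(g;v)$ correspondence) to edges of the local Whitehead graph, and such a path is precisely a line realizing a leaf segment of the expanding lamination — this is essentially the definition of $\mathcal{LW}(g;v)$ unwound. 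So "$G(g)$ crosses edge $e$" translates to "the corresponding turn or edge of $\Gamma$ is crossed."

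Next I would invoke minimality of $\Lambda_\phi$: since $\phi$ is fully irreducible, $\Lambda_\phi$ is a minimal lamination, so every leaf is dense in $\Lambda_\phi$, and in particular every finite subpath of $\Gamma$ that appears in some leaf appears in every leaf, infinitely often in both directions. Concretely, let $\ell$ be any leaf of $\Lambda_\phi$; realize it as a locally smooth line $\tilde\ell$ in $G(g)$. I claim $\tilde\ell$ already crosses every edge of $G(g)$ infinitely often as $\mathbf{R}\to\pm\infty$. For the edges of $G(g)$ coming from edges of $\Gamma$: each edge $e_i$ of $\Gamma$ is crossed by the leaf because $g$ is an expanding irreducible train track map, so the transition matrix is Primitive (Perron--Frobenius), hence every edge appears in the $g$-image of every edge, hence in every leaf, and by minimality it recurs infinitely often in both directions. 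For the edges of $G(g)$ inside the vertex blow-up $\mathcal{LW}(g;v)$: such an edge is a turn $\{d_i, d_j\}$ at $v$ that is taken by the lamination, i.e. the length-two path $\bar e_i e_j$ occurs in some leaf; by minimality it occurs in $\ell$ infinitely often in both directions, and each such occurrence is exactly a crossing of the corresponding edge of $\mathcal{LW}(g;v)$.

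The one point demanding care — and the step I expect to be the main obstacle — is the passage from "the leaf realizes a bi-infinite locally injective edge-path" to "the line in $G(g)$ is locally \emph{smoothly} embedded." This requires checking that the smooth structure put on $G(g)$ is the one for which a length-two subpath $\bar e_i e_j$ of a leaf is smooth at $v$ if and only if $\{d_i,d_j\}$ spans an edge of $\mathcal{LW}(g;v)$ — i.e. that the taken turns are exactly the "smooth" ones — and that leaves of $\Lambda_\phi$ take only legal turns, so they never backtrack and are genuinely embedded lines rather than merely immersed paths. This follows because $g$ is a train track map (no cancellation under iteration, so leaf segments are reduced and legal) combined with the definition of the smooth structure on $G(g)$; I would state this as a lemma identifying smooth lines in $G(g)$ with lines carried by the train track structure, and then the birecurrency is immediate from minimality plus primitivity of the transition matrix. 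Assembling: pick any leaf $\ell$ of $\Lambda_\phi$, realize it in $G(g)$; it is a locally smoothly embedded line crossing every edge infinitely often in both directions, so $G(g)$ is birecurrent.
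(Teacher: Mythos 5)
Your proposal is essentially the paper's argument: identify the realization of a lamination leaf with a smooth line in $G(g)$, use the correspondence between turns taken by leaves and edges of $\mathcal{LW}(g)$ (the paper isolates this as Lemma~\ref{L:LeafTurns}), verify that the resulting line is smooth (the paper's Lemma~\ref{L:SmoothPathsforLeaves}), and then invoke recurrence of lamination leaves to get that every edge of $G(g)$ is crossed infinitely often in both directions. You correctly flag the smoothness verification as the step needing care, and your handling of black edges via Perron--Frobenius primitivity is a reasonable variant of the paper's reduction of black edges to adjacent colored edges. The one place you overreach is in deducing from minimality of $\Lambda_\phi$ that \emph{every} leaf, read in either direction, returns to every finite subword infinitely often; density of a leaf in a minimal lamination does not by itself give bi-directional recurrence of that particular leaf, and the paper sidesteps this by citing (from \cite{bfh00}) merely that attracting laminations \emph{contain} a birecurrent leaf, then choosing such a leaf. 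Since one birecurrent leaf suffices for the conclusion, this is an easily patched imprecision rather than a fatal gap, but you should weaken ``any leaf'' to ``a birecurrent leaf, which exists.''
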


Combinatorial proofs (not included here) of Theorem \ref{T:MainTheorem}A exist. However, we include a proof using the Birecurrency Condition to highlight what we have observed to be a significant obstacle to achievability, namely the birecurrency of ltt structures. The Birecurrency Condition is also used in our proof of Theorem \ref{T:MainTheorem}B. We use it in \cite{p12c}, where we prove the achievability of the complete graph in each rank. Finally, the condition is used in \cite{p12d} to prove precisely which of the twenty-one connected, simplicial, five-vertex graphs are $\mathcal{IW}(\phi)$ for fully irreducible $\phi \in Out(F_3)$.

In Proposition \ref{P:IdealDecomposition} we show that each $\phi$, such that $\mathcal{IW}(\phi) \in \mathcal{PI}_{(r;(\frac{3}{2}-r))}$, has a power $\phi^R$ with a rotationless representative whose Stallings fold decomposition (see Subsection \ref{SS:StallingsFoldDecompositions}) consists entirely of proper full folds of roses (see Subsection \ref{SS:Folds}). The representatives of Proposition \ref{P:IdealDecomposition} are called ``ideally decomposable.'' We define in Section \ref{Ch:AMDiagrams} automata, ideal ``decomposition ($\mathcal{ID}$) diagrams'' with ltt structures as nodes. Every ideally decomposed representative is realized by a loop in an $\mathcal{ID}$ diagram. To prove Theorem \ref{T:MainTheorem}B we show ideally decomposed representatives cannot exist by showing that the $\mathcal{ID}$ diagrams do not have the correct kind of loops.

We again use the ideally decomposed representatives and $\mathcal{ID}$ diagrams in \cite{p12c} and \cite{p12d} to construct ideally decomposed representatives with particular ideal Whitehead graphs.

To determine the edges of the $\mathcal{ID}$ diagrams, we prove in Section \ref{Ch:AMProperties} a list of ``Admissible Map (AM) properties'' held by ideal decompositions. In Section \ref{Ch:Peels} we use the AM properties to determine the two geometric moves one applies to ltt structures in defining edges of the $\mathcal{ID}$ diagrams. The geometric moves turn out to have useful properties expanded upon in \cite{p12c} and \cite{p12d}.

\subsection*{Acknowledgements}

\indent The author would like to thank Lee Mosher for his truly invaluable conversations and Martin Lustig for his interest in her work. She also extends her gratitude to Bard College at Simon's Rock and the CRM for their hospitality.

\section{Preliminary definitions and notation}{\label{Ch:PrelimDfns}}

We continue with the introduction's notation. \emph{\textbf{Further we assume throughout this document that all representatives $g$ of $\phi \in Out(F_r)$ are train tracks (tts).}}

We let $\mathcal{FI}_r$ denoted the subset of $Out(F_r)$ consisting of all fully irreducible elements.

\vskip10pt

\noindent \textbf{2.1. Directions and turns}

\vskip1pt

In general we use the definitions from \cite{bh92} and \cite{bfh00} when discussing train tracks. We give further definitions and notation here. $g: \Gamma \to \Gamma$ will represent $\phi \in Out(F_r)$.

$\mathcal{E}^+(\Gamma)= \{E_1, \dots, E_{n}\}= \{e_1, e_1, \dots, e_{2n-1}, e_{2n} \}$ will be the edge set of $\Gamma$ with some prescribed orientation. For $E \in \mathcal{E}^+(\Gamma)$, $\overline{E}$ will be $E$ oppositely oriented. $\mathcal{E}(\Gamma)$:$=\{E_1, \overline{E_1}, \dots, E_n, \overline{E_n} \}$. If the indexing $\{E_1, \dots, E_{n}\}$ of the edges (thus the indexing $\{e_1, e_1, \dots, e_{2n-1}, e_{2n} \}$) is prescribed, we call $\Gamma$ an \emph{edge-indexed} graph. Edge-indexed graphs differing by an index-preserving homeomorphism will be considered equivalent.

$\mathcal{V}(\Gamma)$ will denote the vertex set of $\Gamma$ ($\mathcal{V}$, when $\Gamma$ is clear) and $\mathcal{D}(\Gamma)$ will denote $\underset{v \in \mathcal{V}(\Gamma)}{\cup} \mathcal{D}(v)$, where $\mathcal{D}(v)$ is the set of directions (germs of initial edge segments) at $v$.

For each $e \in \mathcal{E}(\Gamma)$, $D_0(e)$ will denote the initial direction of $e$ and $D_0 \gamma := D_0(e_1)$ for each path $\gamma=e_1 \dots e_k$ in $\Gamma$. \emph{$Dg$} will denote the direction map induced by $g$. We call $d \in \mathcal{D}(\Gamma)$ \emph{periodic} if $Dg^k(d)=d$ for some $k>0$ and \emph{fixed} if $k=1$.

$Per(x)$ will consist of the periodic directions at an $x \in \Gamma$ and $Fix(x)$ of those fixed. $Fix(g)$ will denote the fixed point set for $g$.

$\mathcal{T}(v)$ will denote the set of turns (unordered pairs of directions) at a $v \in \mathcal{V}(\Gamma)$ and $D^tg$ the induced map of turns. For a path $\gamma=e_1e_2 \dots e_{k-1}e_k$ in $\Gamma$, we say $\gamma$ \emph{contains (or crosses over)} the turn $\{\overline{e_i}, e_{i+1}\}$ for each $1 \leq i < k$. Sometimes we abusively write $\{\overline{e_i}, e_j\}$ for $\{D_0(\overline{e_i}), D_0(e_j)\}$. Recall that a turn is called \emph{illegal} for $g$ if $Dg^k(d_i)=Dg^k(d_j)$ for some $k$ ($d_i$ and $d_j$ are in the same \emph{gate}).

\vskip10pt

\noindent \textbf{2.2. Periodic Nielsen paths and ageometric outer automorphisms}

\vskip1pt

Recall \cite{bf94} that a \emph{periodic Nielsen path (pNp)} is a nontrivial path $\rho$ between $x,y \in Fix(g)$ such that, for some $k$, $g^k(\rho) \simeq \rho$ rel endpoints (\emph{Nielsen path (Np)} if $k=1$). In later sections we use \cite{gjll} that a $\phi \in \mathcal{FI}_r$ is ageometric if and only if some $\phi^k$ has a representative with no pNps (closed or otherwise). $\mathcal{AFI}_r$ will denote the subset of $\mathcal{FI}_r$ consisting precisely of its ageometric elements.

\vskip10pt

\noindent \textbf{2.3. Local Whitehead graphs, local stable Whitehead graphs, and ideal Whitehead graphs}{\label{S:IWGs}}

\vskip1pt

Please note that the ideal Whitehead graphs, local Whitehead graphs, and stable Whitehead graphs used here (defined in \cite{hm11}) differ from other Whitehead graphs in the literature. We clarify a difference. In general, Whitehead graphs record turns taken by immersions of 1-manifolds into graphs. In our case, the 1-manifold is a set of lines, the attracting lamination. In much of the literature the 1-manifolds are circuits representing conjugacy classes of free group elements. For example, for the Whitehead graphs of \cite{cv86}, edge images are viewed as cyclic words. This is not true for ours.

The following can be found in \cite{hm11}, though it is not their original source, and versions here are specialized. See \cite{p12a} for more extensive explanations of the definitions and their invariance. \emph{\textbf{For this subsection $g: \Gamma \to \Gamma$ will be a pNp-free train track.}}

\begin{df}{\label{D:whiteheadgraphs}} Let $\Gamma$ be a connected marked graph, $v \in \Gamma$, and $g: \Gamma \to \Gamma$ a representative of $\phi \in Out(F_r)$. The \emph{local Whitehead graph} for $g$ at $v$ (denoted \emph{$\mathcal{LW}(g; v)$}) has:

(1) a vertex for each direction $d \in \mathcal{D}(v)$ and

(2) edges connecting vertices for $d_1, d_2 \in \mathcal{D}(v)$ where $\{d_1, d_2 \}$ is taken by some $g^k(e)$, with $e \in \mathcal{E}(\Gamma)$.

\noindent The \emph{local Stable Whitehead graph} $\mathcal{SW}(g; v)$ is the subgraph obtained by restricting precisely to vertices with labels in $Per(v)$. For a rose $\Gamma$ with vertex $v$, we denote the single local stable Whitehead graph $\mathcal{SW}(g; v)$ by $\mathcal{SW}(g)$ and the single local Whitehead graph $\mathcal{LW}(g; v)$ by $\mathcal{LW}(g)$.

For a pNp-free $g$, the \emph{ideal Whitehead graph of $\phi$}, \emph{$\mathcal{IW}(\phi)$}, is isomorphic to $\underset{\text{singularities v} \in \Gamma}{\bigsqcup} \mathcal{SW}(g;v)$, where a \emph{singularity} for $g$ in $\Gamma$ is a vertex with at least three periodic directions. In particular, when $\Gamma$ is a rose, $\mathcal{IW}(\phi) \cong \mathcal{SW}(g)$.
\end{df}

\begin{ex}{\label{Ex:whiteheadgraphs}}
Let $g: \Gamma \to \Gamma$, where $\Gamma$ is a rose and $g$ is the train track such that the following describes the edge-path images of its edges:
$$g =
\begin{cases}
a \mapsto abacbaba\bar{c}abacbaba \\
b \mapsto ba\bar{c} \\
c \mapsto c\bar{a}\bar{b}\bar{a}\bar{b}\bar{a}\bar{b}\bar{c}\bar{a}\bar{b}\bar{a}c
\end{cases}.
$$

The vertices for $\mathcal{LW}(g)$ are $\{a, \bar a, b, \bar b, c, \bar c \}$ and the vertices of $\mathcal{SW}(g)$ are $\{a, \bar a, b, c, \bar c \}$: The periodic (actually fixed) directions for $g$ are $\{a, \bar a, b, c, \bar c \}$. $\bar b$ is not periodic since $Dg(\bar b)=c$, which is a fixed direction, meaning that $Dg^k(\bar b)=c$ for all $k \geq 1$, and thus $Dg^k(\bar{b})$ does NOT equal $\bar{b}$ for any $k \geq 1$.

The turns taken by the $g^k(E)$, for $E \in \mathcal{E}(\Gamma)$, are $\{a,\bar{b}\}$, $\{\bar{a},\bar{c}\}$, $\{b,\bar{a}\}$, $\{b,\bar{c}\}$, $\{c,\bar{a}\}$, and $\{a, c\}$. Since $\{a,\bar{b}\}$ contains the nonperiodic direction $\bar{b}$, this turn does not give an edge in $\mathcal{SW}(g)$, though does give an edge in $\mathcal{LW}(g)$. All other turns listed give edges in both $\mathcal{SW}(g)$ and $\mathcal{LW}(g)$.

$\mathcal{LW}(g)$ and $\mathcal{SW}(g)$ respectively look like (reasons for colors become clear in Subsection 2.4):
~\\
\vspace{-6.5mm}
\begin{figure}[H]
\centering
\noindent \includegraphics[width=2in]{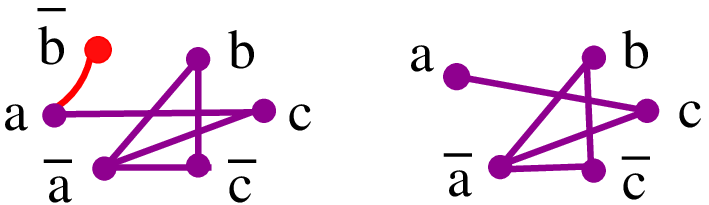}
\label{fig:IdealWhiteheadGraphs} \\[-2mm]
\end{figure}
\end{ex}

\vskip7pt

\noindent \textbf{2.4. Lamination train track structures}{\label{SS:Realltts}}

\vskip1pt

We define here ``lamination train track (ltt) structures.'' Bestvina, Feighn, and Handel discussed in their papers slightly different train track structures. However, those we define contain as smooth paths lamination (see \cite{bfh00}) leaf realizations. This makes them useful for deeming unachieved particular ideal Whitehead graphs and for constructing representatives (see \cite{p12c} and \cite{p12d}). \emph{\textbf{Again, $g: \Gamma \to \Gamma$ will be a pNp-free train track on a marked rose with vertex $v$.}}

The \emph{colored local Whitehead graph $\mathcal{CW}(g)$ at $v$}, is $\mathcal{LW}(g)$, but with the subgraph $\mathcal{SW}(g)$ colored purple and $\mathcal{LW}(g)- \mathcal{SW}(g)$ colored red (nonperiodic direction vertices are red).

Let $\Gamma_N=\Gamma-N(v)$ where $N(v)$ is a contractible neighborhood of $v$. For each $E_i \in \mathcal{E}^+$, add vertices $d_i$ and $\overline{d_i}$ at the corresponding boundary points of the partial edge $E_i-(N(v) \cap E_i)$. A \emph{lamination train track (ltt) Structure} \emph{$G(g)$} for $g$ is formed from $\Gamma_N \bigsqcup \mathcal{CW}(g)$ by identifying the vertex $d_i$ in $\Gamma_N$ with the vertex $d_i$ in $\mathcal{CW}(g)$. Vertices for nonperiodic directions are red, edges of $\Gamma_N$ black, and all periodic vertices purple.

An ltt structure $G(g)$ is given a \emph{smooth structure} via a partition of the edges at each vertex into two sets: $\mathcal{E}_b$ (containing the black edges of $G(g)$) and $\mathcal{E}_c$ (containing the colored edges of $G(g)$). A \emph{smooth path} we will mean a path alternating between colored and black edges.

An edge connecting a vertex pair $\{d_i, d_j \}$ will be denoted [$d_i, d_j$], with interior ($d_i, d_j$). Additionally, $[e_i]$ will denote the black edge [$d_i, \overline{d_i}$] for $e_i \in \mathcal{E}(\Gamma)$.

For a smooth (possibly infinite) path $\gamma$ in $G(g)$, the \emph{path (or line) in $\Gamma$ corresponding to $\gamma$} is $\dots e_{-j}e_{-j+1} \dots e_{-1}e_0e_1 \dots e_j \dots$, with $\gamma= \dots [d_{-j}, \overline{d_{-j}}][\overline{d_{-j}}, d_{-j+1}] \dots [d_0, \overline{d_0}][\overline{d_0}, d_1] \dots [d_j, \overline{d_j}] \dots,$ where each $d_i=D_0(e_i)$, each $[d_i, \overline{d_i}]$ is the black edge $[e_i]$, and each $[d_i, \overline{d_{i+1}}]$ is a colored edge. We denote such a path $\gamma=  [\dots, d_{-j}, \overline{d_{-j}}, d_{-j+1}, \dots, \overline{d_{-1}}, d_0, \overline{d_0}, d_1, \dots, d_j, \overline{d_j} \dots].$

\begin{ex}{\label{Ex:G(g)}} Let $g$ be as in Example \ref{Ex:whiteheadgraphs}. The vertex $\bar b$ in $G(g)$ is red. All others are purple. $G(g)$ has a purple edge for each edge in $\mathcal{SW}(g)$ and a single red edge for the turn $\{a,\bar{b}\}$ (represented by an edge in $\mathcal{LW}(g)$, but not in $\mathcal{SW}(g)$). $\mathcal{CW}(g)$ is $\mathcal{LW}(g)$ with the coloring of Example \ref{Ex:whiteheadgraphs}. And $G(g)$ is obtained from $\mathcal{CW}(g)$ by adding black edges connecting the vertex pairs $\{a,\bar{a}\}$, $\{b,\bar{b}\}$, and $\{c,\bar{c}\}$ (corresponding precisely to the edges $a, b,$ and $c$ of $\Gamma$).
~\\
\vspace{-7.2mm}
\begin{figure}[H]
\centering
\noindent \includegraphics[width=1in]{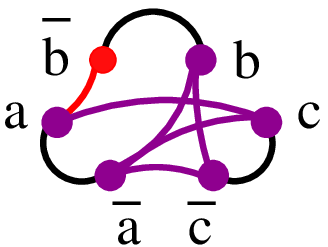}
\label{fig:lttExample} \\[-3mm]
\end{figure}
\indent Once can check that each $g(e)$ is realized by a smooth path in $G(g)$.
\end{ex}

\begin{rk} If $\Gamma$ had more than one vertex, one could define $G(g)$ by creating a colored graph $\mathcal{CW}(g;v)$ for each vertex, removing an open neighborhood of each vertex when forming $\Gamma_N$, and then continuing with the identifications as above in $\Gamma_N \bigsqcup (\cup \mathcal{CW}(g;v))$.
\end{rk}

\section{Ideal decompositions}{\label{Ch:IdealDecompositions}}

In this section we prove (Proposition \ref{P:IdealDecomposition}): if $\mathcal{G} \in \mathcal{PI}_{(r;(\frac{3}{2}-r))}$ is $\mathcal{IW}(\phi)$ for a $\phi \in \mathcal{AFI}_r$, then $\phi$ has a rotationless power with a representative satisfying several nice properties, including that its Stallings fold decomposition consists entirely of proper full folds of roses. We call such a decomposition an \emph{ideal decomposition}. Proving an ideal decomposition cannot exist will suffice to deem a $\mathcal{G}$ unachieved.

We remind the reader of definitions of folds and a Stallings fold decomposition before introducing ideal decompositions, as our Proposition \ref{P:IdealDecomposition} proof relies heavily upon them.

\subsection{Folds}{\label{SS:Folds}}

Stallings introduced ``folds'' in \cite{s83} and Bestvina and Handel use several versions in their train track algorithm of \cite{bh92}.

Let $g: \Gamma \to \Gamma$ be a homotopy equivalence of marked graphs. Suppose $g(e_1)=g(e_2)$ as edge paths, where $e_1, e_2 \in \mathcal{E}(\Gamma)$ emanate from a common vertex $v \in \mathcal{V} (\Gamma)$. One can obtain a graph $\Gamma_1$ by identifying $e_1$ and $e_2$ in such a way that $g:\Gamma \to \Gamma$ projects to $g_1: \Gamma_1 \to \Gamma_1$ under the quotient map induced by the identification of $e_1$ and $e_2$. $g_1$ is also a homotopy equivalence and one says $g_1$ and $\Gamma_1$ are obtained from $g$ by an \emph{elementary fold} of $e_1$ and $e_2$.

To generalize one requires $e_1' \subset e_1$ and $e_2' \subset e_2$ only be maximal, initial, nontrivial subsegments of edges emanating from a common vertex such that $g(e_1')=g(e_2')$ as edge paths and such that the terminal endpoints of $e_1$ and $e_2$ are in $g^{-1}(\mathcal{V}(\Gamma))$. Possibly redefining $\Gamma$ to have vertices at the endpoints of $e_1'$ and $e_2'$, one can fold $e_1'$ and $e_2'$ as $e_1$ and $e_2$ were folded above. We say $g_1\colon\Gamma_1 \to \Gamma_1$ is obtained by
~\\
\vspace{-6mm}
\begin{itemize}
\item a \emph{partial fold} of $e_1$ and $e_2$: if both $e_1'$ and $e_2'$ are proper subedges; \\[-6mm]
\item  a \emph{proper full fold} of $e_1$ and $e_2$: if only one of $e_1'$ and $e_2'$ is a proper subedge (the other a full edge); \\[-6mm]
\item an \emph{improper full fold} of $e_1$ and $e_2$: if $e_1'$ and $e_2'$ are both full edges.
\end{itemize}

\subsection{Stallings fold decompositions}{\label{SS:StallingsFoldDecompositions}}

Stallings \cite{s83} also showed a tight homotopy equivalence of graphs is a composition of elementary folds and a final homeomorphism. We call such a decomposition a \emph{Stallings fold decomposition}.

A description of a Stallings Fold Decomposition can be found in \cite{s89}, where Skora described a Stallings fold decomposition for a $g\colon \Gamma \to \Gamma'$ as a sequence of folds performed continuously. Consider a lift $\tilde{g}\colon \tilde{\Gamma} \to \tilde{\Gamma}'$, where here $\tilde{\Gamma}'$ is given the path metric. Foliate $\tilde{\Gamma}$ x $\tilde{\Gamma}'$ with the leaves $\tilde{\Gamma}$ x $\{x'\}$ for $x' \in \Gamma'$. Define $N_t(\tilde{g})=\{(x,x') \in \tilde{\Gamma}$ x $\tilde{\Gamma}'$ $\vert$ $d(\tilde{g}(x),x') \leq t \}$. For each $t$, by restricting the foliation to $N_t$ and collapsing all leaf components, one obtains a tree $\Gamma_t$. Quotienting by the $F_r$-action, one sees the sequence of folds performed on the graphs below over time.

Alternatively, at an illegal turn for $g\colon \Gamma  \to \Gamma$, fold maximal initial segments having the same image in $\tilde{\Gamma}'$ to obtain a map $g^1: \Gamma_1 \to \Gamma'$ of the quotient graph $\Gamma_1$. Repeat for $g^1$. If some $g^k$ has no illegal turn, it will be a homeomorphism and the fold sequence is complete. Using this description, we can assume only the final element of the decomposition is a homeomorphism. Thus, a Stallings fold decomposition of $g:\Gamma \to \Gamma$ can be written $\Gamma_0 \xrightarrow{g_1} \Gamma_1 \xrightarrow{g_2} \cdots \xrightarrow{g_{n-1}} \Gamma_{n-1} \xrightarrow{g_n} \Gamma_n$ where each $g_k$, with $1 \leq k \leq n-1$, is a fold and $g_n$ is a homeomorphism.

\subsection{Ideal Decompositions}{\label{SS:Folds}}

In this subsection we prove Proposition \ref{P:IdealDecomposition}. For the proof, we need \cite{hm11}: For $\phi \in \mathcal{AFI}_r$ such that $\mathcal{IW}(\phi) \in \mathcal{PI}_{(r;(\frac{3}{2}-r))}$, $\phi$ is \emph{rotationless} if and only if the vertices of $\mathcal{IW}(\phi)$ are fixed by the action of $\phi$. We also need that a representative $g$ of $\phi \in Out(F_r)$ is rotationless if and only if $\phi$ is rotationless. Finally, we need the following lemmas.

\begin{lem}{\label{L:pNpFreePreserved}} Let $g \colon \Gamma \to \Gamma$ be a pNp-free tt representative of $\phi \in \mathcal{FI}_r$ and $\Gamma = \Gamma_0 \xrightarrow{g_1} \Gamma_1 \xrightarrow{g_2} \cdots \xrightarrow{g_{n-1}} \Gamma_{n-1} \xrightarrow{g_n} \Gamma_n = \Gamma$ a decomposition of $g$ into homotopy equivalences of marked graphs with no valence-one vertices. Then the composition $h \colon \Gamma_k \xrightarrow{g_{k+1}} \Gamma_{k+1} \xrightarrow{g_{k+2}} \cdots \xrightarrow{g_{k-1}} \Gamma_{k-1} \xrightarrow{g_k} \Gamma_k$ is also a pNp-free tt representative of $\phi$ (in particular, $\mathcal{IW}(h) \cong \mathcal{IW}(g)$).
\end{lem}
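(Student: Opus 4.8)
The plan is to transport each property of $g$ to $h$ through the factorization of $g$ across $\Gamma_k$. I would write $\psi:=g_k\circ\cdots\circ g_1\colon\Gamma_0\to\Gamma_k$ and $\chi:=g_n\circ\cdots\circ g_{k+1}\colon\Gamma_k\to\Gamma_n=\Gamma_0$, so that $g=\chi\circ\psi$, $h=\psi\circ\chi$, and the two identities $h^m=\psi\circ g^{m-1}\circ\chi$ and $g^m\circ\chi=\chi\circ h^m$ hold for all $m\geq 1$. First I would verify that $h$ represents $\phi$: equipping $\Gamma_k$ with the marking $\psi\circ\sigma$, where $\sigma$ is the marking of $\Gamma_0$ (legitimate since $\Gamma_k$ has no valence-one vertices, by hypothesis), the outer automorphism determined by $h=\psi\circ\chi$ with respect to this marking is $[\,\sigma^{-1}\circ(\chi\circ\psi)\circ\sigma\,]=\phi$, i.e. $h$ and $g$ are conjugate via the marking change $\psi$. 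That $h$ carries vertices to vertices is inherited from each $g_i$.

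The heart of the argument is showing $h$ is a train track map, i.e. that $h^m(E)$ is a reduced edge-path for every edge $E$ of $\Gamma_k$ and every $m\geq 1$. After subdividing $\Gamma_0$, every edge $E$ of $\Gamma_k$ is $\psi(E')$ for an edge $E'$ of $\Gamma_0$, whence $\chi(E)=\chi(\psi(E'))=g(E')$ and so $h^m(E)=\psi\big(g^{m-1}(\chi(E))\big)=\psi\big(g^m(E')\big)$. Since $g$ is a train track map, $g^m(E')$ is a reduced path, and moreover every turn it crosses is legal for $g$ — an illegal turn inside $g^m(E')$ would force a cancellation in some $g^{m+N}(E')$. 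So it suffices to prove the following, by induction on $j$: \emph{the partial map $\psi_j:=g_j\circ\cdots\circ g_1$ carries every reduced path of $\Gamma_0$ whose turns are all legal for $g$ to a reduced path of $\Gamma_j$.} The inductive step reduces to checking that the path $\psi_{j-1}(\gamma)$ — reduced by the inductive hypothesis — crosses no turn that $Dg_j$ collapses, for then $g_j$ creates no backtracking upon it; and a turn of $\Gamma_{j-1}$ collapsed by $Dg_j$, pulled back through $\psi_{j-1}$ along $g=(g_n\circ\cdots\circ g_{j+1})\circ g_j\circ\psi_{j-1}$, is a turn of $\Gamma_0$ collapsed by $Dg$ — i.e. an illegal turn of $g$, which a $g$-legal path avoids. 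Taking $j=k$ gives $h^m(E)=\psi(g^m(E'))$ reduced, so $h$ is a train track map.

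The step I expect to be the main obstacle is this legal/illegal turn bookkeeping: one must control \emph{all} turns appearing in $\psi_{j-1}(\gamma)$, both the $D\psi_{j-1}$-images of the turns of $\gamma$ and the turns internal to the edge-path images $\psi_{j-1}(E)$, and see that each pulls back to a non-collapsed (indeed $g$-legal) turn. For a Stallings fold decomposition this is manageable because each $\psi_j(E)$ reads off as a subpath of $g(E)$ inside $\Gamma_j$ with no $\chi_j$-cancellation, but stating and proving this cleanly — tracking how the illegal turns of the successive maps $g_j\cdots g_1$ and $g_n\cdots g_{j+1}$ sit relative to one another — is the delicate part. Next I would handle pNp-freeness: if $\rho$ were a periodic Nielsen path for $h$, with $h^p(\rho)\simeq\rho$ rel endpoints, $\rho$ nontrivial and its endpoints fixed by $h^p$, then $g^p\circ\chi=\chi\circ h^p$ yields $g^p(\chi(\rho))\simeq\chi(\rho)$ rel endpoints; a short argument — again a turn check, using that $\chi$ is a $\pi_1$-injective homotopy equivalence sending edges to nondegenerate edge-paths, so it cannot cancel $\rho$ away — shows the tightened path $[\chi(\rho)]$ is nontrivial, hence a periodic Nielsen path for $g$, contradicting the pNp-freeness of $g$. (One could instead cite that, for a fully irreducible $\phi$, whether a train track representative carries a periodic Nielsen path depends only on $\phi$, by \cite{gjll}.)

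Finally, once $h$ is established as a pNp-free train track representative of $\phi$, the parenthetical claim $\mathcal{IW}(h)\cong\mathcal{IW}(g)$ follows at once: by Definition \ref{D:whiteheadgraphs} and the invariance of the ideal Whitehead graph under the choice of pNp-free train track representative, $\mathcal{IW}(h)\cong\mathcal{IW}(\phi)\cong\mathcal{IW}(g)$.
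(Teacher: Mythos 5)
Your overall strategy for the train track portion is genuinely different from the paper's, and as written it has a real gap --- one you partly flag yourself. You reduce to the claim that $\psi_j=g_j\circ\cdots\circ g_1$ sends every reduced, $g$-legal path of $\Gamma_0$ to a reduced path of $\Gamma_j$, and you propose to prove the inductive step by ``pulling back'' any turn collapsed by $Dg_j$ through $\psi_{j-1}$ to a $Dg$-collapsed turn of $\Gamma_0$. That pullback only makes sense for turns of $\psi_{j-1}(\gamma)$ that are $D\psi_{j-1}$-images of turns of $\gamma$. The turns internal to an edge image $\psi_{j-1}(e)$ have no preimage turn in $\Gamma_0$ at all --- their preimages are interior points of $e$ --- so $g$-legality of $\gamma$ says nothing about them, and the inductive step does not close. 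What you would actually need is that no $\psi_j(e)$ crosses a turn collapsed by later derivatives, which is essentially the statement being proven; your remark that ``for a Stallings fold decomposition this is manageable'' both leaves this unproven and narrows the scope, since the lemma is stated (and used) for an arbitrary decomposition of $g$ into homotopy equivalences of marked graphs with no valence-one vertices, not only for fold decompositions. The paper avoids all of this bookkeeping by arguing forward rather than backward: if $h(e)$ crossed an $h$-illegal turn $\{d_1,d_2\}$, surjectivity gives an edge $e_i$ of $\Gamma_0$ with $(g_k\circ\cdots\circ g_1)(e_i)$ traversing $e$, and then $g^2(e_i)=(g_n\circ\cdots\circ g_{k+1})\circ h\circ(g_k\circ\cdots\circ g_1)(e_i)$ crosses the turn $\{D(g_n\circ\cdots\circ g_{k+1})(d_1),D(g_n\circ\cdots\circ g_{k+1})(d_2)\}$, which is illegal or degenerate for $g$, contradicting that $g$ is a train track. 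No induction through the stages, and no claim that $\psi$ preserves reducedness, is needed; you should replace your key claim by this two-line forward push.

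The marking-change argument and the periodic Nielsen path transport ($\rho\mapsto\chi(\rho)$, using $g^p\circ\chi=\chi\circ h^p$) do match the paper. Two caveats there: nontriviality of $[\chi(\rho)]$ should not be justified by ``$\pi_1$-injectivity'' --- a homotopy equivalence need not be injective on the universal cover, so a nontrivial-rel-endpoints path could a priori have null image; the paper's fix is again to push forward, noting that if $\chi(\rho)$ were (homotopically) trivial then $h^p(\rho)=(g_k\circ\cdots\circ g_1\circ g^{p-1})(\chi(\rho))$ would be trivial rel endpoints, contradicting $h^p(\rho)\simeq\rho$ with $\rho$ nontrivial. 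Also, your fallback citation --- that existence of a pNp for a train track representative depends only on $\phi$ --- is not something you can lean on: it is not what \cite{gjll} provides, and it sits uneasily with the way ageometricity is defined in this paper (some power has \emph{some} pNp-free representative), so keep the direct transport argument as your actual proof.
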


\begin{proof} Suppose $h$ had a pNp $\rho$ and $h^p(\rho) \simeq \rho$ rel endpoints. Let $\rho_1=g_n \circ \cdots \circ g_{k+1}(\rho)$.  If $\rho_1$ were trivial, $h^p(\rho)=(g_k \circ \cdots \circ g_1 \circ g^{p-1})(g_n \circ \cdots \circ g_{k+1}(\rho))=(g_k \circ \cdots \circ g_1 \circ g^{p-1})(\rho_1)$ would be trivial, contradicting $\rho$ being a pNp. So assume $\rho_1$ is not trivial.

$g^p(\rho_1)=g^p((g_k \circ \cdots \circ g_1)(\rho))=(g_n \circ \cdots \circ g_{k+1}) \circ h^p(\rho)$.  Now, $h^p(\rho) \simeq \rho$ rel endpoints and so $(g_n \circ \cdots \circ g_{k+1}) \circ h^p(\rho) \simeq (g_n \circ \cdots \circ g_{k+1})(\rho)$ rel endpoints. So $g^p(\rho_1)=g^p((g_k \circ \cdots \circ g_1)(\rho))=(g_n \circ \cdots \circ g_{k+1}) \circ h^p(\rho)$ is homotopic to $(g_n \circ \cdots \circ g_{k+1})(\rho)=\rho_1$ rel endpoints. This makes $\rho_1$ a pNp for $g$, contradicting that $g$ is pNp-free. Thus, $h$ is pNp-free.

Let $\pi\colon R_r \to \Gamma$ mark $\Gamma_1$. Since $g_1$ is a homotopy equivalence, $g_1 \circ \pi$ gives a marking on $\Gamma$. So $g$ and $h$ differ by a change of marking and thus represent the same outer automorphism $\phi$.

Finally, we show $h$ is a train track. For contradiction's sake suppose $h(e)$ crossed an illegal turn $\{d_1, d_2 \}$. Since each $g_j$ is necessarily surjective, some $(g_k \circ \cdots \circ g_1)(e_i)$ would traverse $e$. So $(g_k \circ \cdots \circ g_1)(e_i)$ would cross $\{d_1, d_2 \}$. And $g^2(e_i)=(g_n \circ \cdots \circ g_{k+1}) \circ h \circ (g_k \circ \cdots \circ g_1)(e_i)$ would cross $\{D(g_n \circ \cdots \circ g_{k+1})(d_1), D(g_n \circ \cdots \circ g_{k+1})(d_2) \}$, which would either be illegal or degenerate (since $\{d_1, d_2 \}$ is an illegal turn). This would contradict that $g$ is a tt.  So $h$ is a tt.  \qedhere
\end{proof}

\begin{lem}{\label{L:GateCollapsing}} Let $g: \Gamma \to \Gamma$ be a pNp-free tt representative of $\phi \in \mathcal{FI}_r$ with $2r-1$ fixed directions and Stallings fold decomposition $\Gamma_0 \xrightarrow{g_1} \Gamma_1 \xrightarrow{g_2} \cdots \xrightarrow{g_{n-1}} \Gamma_{n-1} \xrightarrow{g_n} \Gamma_n$. Let $g^i$ be such that $g=g^i \circ g_i \circ \cdots \circ g_1$. Let $d_{(1,1)}, \dots, d_{(1,2r-1)}$ be the fixed directions for $Dg$ and let $d_{j,k}=D(g_j \circ \cdots \circ g_1)(d_{1,k})$ for each $1 \leq j \leq n$ and $1 \leq k \leq 2r-1$. Then $D(g^i)$ is injective on $\{d_{(i,1)}, \dots, d_{(i,2r-1)}\}$.
\end{lem}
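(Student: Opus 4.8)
The plan is to argue that if $D(g^i)$ identified two distinct directions $d_{(i,k)}$ and $d_{(i,\ell)}$, then $g$ itself would have too few fixed directions, contradicting the hypothesis that $g$ has exactly $2r-1$ of them. The key observation is that the direction map $Dg$ factors as $Dg = D(g^i)\circ D(g_i\circ\cdots\circ g_1)$, and more generally $Dg = D(g^i)\circ D(g_i)\circ\cdots\circ D(g_1)$, so each intermediate composition $D(g_j\circ\cdots\circ g_1)$ is a step toward applying $Dg$. Each $d_{(1,k)}$ is a fixed direction, so $Dg(d_{(1,k)}) = d_{(1,k)}$, and hence tracking $d_{(1,k)}$ through the fold sequence and back gives $d_{(n,k)} = d_{(1,k)}$, i.e. the $2r-1$ directions $d_{(j,1)},\dots,d_{(j,2r-1)}$ at each stage $\Gamma_j$ form an orbit cycle under the fold decomposition that closes up after the final homeomorphism $g_n$.

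First I would record the basic factorization identities and note that each $Dg_j$ is a well-defined direction map (folds and homeomorphisms induce direction maps), so that $d_{(j,k)}$ is well-defined and $D(g^i)(d_{(i,k)}) = d_{(n,k)}$ up to the identification $\Gamma_n = \Gamma$; composing all the way around, $Dg(d_{(1,k)}) = d_{(1,k)}$ recovers the fixedness. Next, suppose for contradiction that $d_{(i,k)} = d_{(i,\ell)}$ for some $k\neq\ell$. Applying $D(g^i)$, which is the remaining part of the cycle, we get $d_{(n,k)} = d_{(n,\ell)}$, hence (identifying $\Gamma_n=\Gamma_0=\Gamma$) $d_{(1,k)} = d_{(1,\ell)}$, contradicting that the $d_{(1,1)},\dots,d_{(1,2r-1)}$ are $2r-1$ \emph{distinct} fixed directions. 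Wait — this only works if I already know the $d_{(1,k)}$ are distinct and that the cycle genuinely returns each to itself; the subtlety is whether a collapse at stage $i$ can be ``undone'' later. It cannot: once two directions are identified by a quotient map, they stay identified under all subsequent maps, so $d_{(i,k)}=d_{(i,\ell)}$ forces $d_{(j,k)}=d_{(j,\ell)}$ for all $j\ge i$, in particular at $j=n$.

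So the cleaner route is: the $2r-1$ fixed directions $d_{(1,1)},\dots,d_{(1,2r-1)}$ are distinct by hypothesis; propagating them through $g_1,\dots,g_n$ and using that the composite is $g$ (whose direction map fixes each of them) shows the map $d_{(1,k)}\mapsto d_{(n,k)}$ is, after the identification $\Gamma_n=\Gamma$, the identity, hence a bijection on this set; therefore no two of them can be identified at \emph{any} intermediate stage, since an identification persists. In particular the partial composite $D(g^i)$ restricted to $\{d_{(i,1)},\dots,d_{(i,2r-1)}\}$ must be injective, for otherwise $d_{(i,k)}=d_{(i,\ell)}$ (impossible, just shown) or $D(g^i)(d_{(i,k)})=D(g^i)(d_{(i,\ell)})$ with $d_{(i,k)}\ne d_{(i,\ell)}$; but the latter says $d_{(n,k)}=d_{(n,\ell)}$, contradicting that $d\mapsto d_{(n,\cdot)}$ is a bijection on the fixed directions. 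The main obstacle is purely bookkeeping: making precise the claim that identifications made by fold quotient maps are never reversed, and being careful about the identification $\Gamma_n = \Gamma_0 = \Gamma$ used to close the cycle and compare $d_{(n,k)}$ with $d_{(1,k)}$ — this is where one must invoke that $g = g_n\circ\cdots\circ g_1$ really is the original $g$ and that its fixed directions are exactly the prescribed $2r-1$.
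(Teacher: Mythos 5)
Your argument is correct and is essentially the paper's own proof: since $Dg = D(g^i)\circ D(g_i\circ\cdots\circ g_1)$ fixes the $2r-1$ distinct directions $d_{(1,k)}$, any identification among the $d_{(i,k)}$ or their $D(g^i)$-images would force $Dg$ to have fewer than $2r-1$ distinct images on its fixed directions, a contradiction. The extra discussion of persistence of identifications and the $\Gamma_n=\Gamma_0$ bookkeeping is harmless but not needed beyond the one-line factorization argument the paper gives.
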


\begin{proof} Let $d_{(1,1)}, \dots, d_{(1,2r-1)}$ be the fixed directions for $Df$. If $D(g^i)$ identified any of $d_{(i,1)}, \dots, d_{(i,2r-1)}$, then $Df$ would have fewer than 2r-1 directions in its image. \qedhere
\end{proof}

\begin{prop}{\label{P:IdealDecomposition}} Let $\phi \in Out(F_r)$ be an ageometric, fully irreducible outer automorphism whose ideal Whitehead graph $\mathcal{IW}(\phi)$ is a connected, (2r-1)-vertex graph. Then there exists a train track representative of a power $\psi=\phi^R$ of $\phi$ that is:
~\\
\vspace{-\baselineskip}
\begin{enumerate}[itemsep=-1.5pt,parsep=3pt,topsep=3pt]
\item on the rose, \\[-5.5mm]
\item rotationless, \\[-5.5mm]
\item pNp-free, and \\[-5.5mm]
\item decomposable as a sequence of proper full folds of roses.
\end{enumerate}
\noindent In fact, it decomposes as $\Gamma = \Gamma_0 \xrightarrow{g_1} \Gamma_1 \xrightarrow{g_2} \cdots \xrightarrow{g_{n-1}}
\Gamma_{n-1} \xrightarrow{g_n} \Gamma_n = \Gamma$, where: \newline
\noindent (I) the index set $\{1, \dots, n \}$ is viewed as the set $\mathbf {Z}$/$n \mathbf {Z}$ with its natural cyclic ordering; \newline
\noindent (II) each $\Gamma_k$ is an edge-indexed rose with an indexing $\{e_{(k,1)}, e_{(k,2)}, \dots, e_{(k,2r-1)}, e_{(k,2r)}\}$ where:
~\\
\vspace{-\baselineskip}
{\begin{description}
\item (a) one can edge-index $\Gamma$ with $\mathcal{E}(\Gamma)=\{e_1, e_2, \dots, e_{2r-1}, e_{2r} \}$ such that, for each $t$ with $1 \leq t \leq 2r$, $g(e_t)=e_{i_1} \dots e_{i_s}$ where $(g_n \circ \cdots \circ g_1)(e_{0,t})=e_{n, i_1} \dots e_{n, i_s}$; \\[-5mm]
\item (b) for some $i_k,j_k$ with $e_{k,i_k} \neq (e_{k,j_k})^{\pm 1}$
$$g_k(e_{k-1,t}):=
\begin{cases}
e_{k,t} e_{k,j_k} \text{ for $t=i_k$} \\
e_{k,t} \text{ for all $e_{k-1,t} \neq e_{k-1,j_k}^{\pm 1}$; and}
\end{cases}$$
\noindent (the edge index permutation for the homeomorphism in the decomposition is trivial, so left out)
\item (c) for each $e_t \in \mathcal{E}(\Gamma)$ such that $t \neq j_n$, we have $Dh(d_t)=d_t$, where $d_t=D_0(e_t)$.
\end{description}}
\end{prop}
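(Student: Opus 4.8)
The plan is to build the desired representative in stages, peeling off one structural requirement at a time, and then to extract the Stallings-fold decomposition and read off the combinatorial bookkeeping in (I)--(II). First I would invoke the standard theory to get a starting point: since $\phi$ is fully irreducible, it has a train track representative, and since $\phi$ is ageometric (Subsection 2.2, citing \cite{gjll}), after passing to a power we may assume the representative is pNp-free. By passing to a further power we may assume the representative is rotationless (using the cited fact that a representative is rotationless iff $\phi$ is, together with the \cite{hm11} characterization that rotationless here means the vertices of $\mathcal{IW}(\phi)$ are $\phi$-fixed); this gives items (2) and (3). Because $\mathcal{IW}(\phi)$ is connected with $2r-1$ vertices, the index computation recalled in the introduction (each component $C_i$ with $k_i$ vertices contributes $1-\tfrac{k_i}{2}$, so here the single-element index list is $(\tfrac32 - r)$) forces, via the $\mathcal{SW}$/$\mathcal{IW}$ relationship in Definition \ref{D:whiteheadgraphs}, that the representative has a single vertex which is the unique singularity and carries exactly $2r-1$ periodic (hence, after the rotationless reduction, fixed) directions. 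The remaining point for item (1) is that the underlying graph is a rose: if the train track graph $\Gamma$ had a non-singularity vertex it could be removed by subdividing/collapsing a forest as in \cite{bh92} without disturbing the other properties, and an Euler-characteristic/valence count using that all $2r-1$ periodic directions sit at one vertex pins the graph down to the $r$-petaled rose.

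Next I would produce the decomposition into folds. Apply the Stallings/Skora fold decomposition of Subsection \ref{SS:StallingsFoldDecompositions} to the rose representative $g\colon \Gamma \to \Gamma$, writing $g = g_n \circ \cdots \circ g_1$ with each $g_k$ ($k<n$) an elementary fold and $g_n$ a homeomorphism; by Lemma \ref{L:pNpFreePreserved} the intermediate graphs $\Gamma_k$ may be taken with no valence-one vertices, and cyclically composing (again Lemma \ref{L:pNpFreePreserved}) each ``rotation'' $h_k$ of the sequence is again a pNp-free tt representative of $\phi$ with $\mathcal{IW}(h_k)\cong\mathcal{IW}(\phi)$. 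The key claim is that every $g_k$ is a \emph{proper full fold of roses}. That each $\Gamma_k$ is a rose I would get from the fact (which one can argue from the fold combinatorics, or again by an Euler-characteristic plus valence argument using Lemma \ref{L:GateCollapsing}) that folding cannot create valence-one vertices here and the number of non-singular vertices stays zero; Lemma \ref{L:GateCollapsing} is exactly the tool that guarantees no two of the relevant fixed directions get identified along the way, which is what prevents the folds from being ``improper'' (an improper full fold would collapse a whole petal and destroy a direction) and also rules out partial folds, since a partial fold would leave a non-rose intermediate graph or create an extra vertex. This pins each $g_k$ to the form described in (II)(b): on the rose there is a single illegal turn being folded at each stage, one of the two edges is swallowed entirely, and the edge-index formula $g_k(e_{k-1,t}) = e_{k,t}e_{k,j_k}$ for $t=i_k$ and $e_{k,t}$ otherwise is just the bookkeeping for a proper full fold after an index-preserving identification of the quotient rose with the standard rose. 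Then one obtains (1) and (4).

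For the explicit bookkeeping (I), (II)(a), and (II)(c): the cyclic indexing in (I) is set up precisely so that we can regard $g$ itself as the loop $\Gamma_0 \to \cdots \to \Gamma_n = \Gamma_0$, and (II)(a) is the statement that the composite of the per-stage edge-index formulas recovers the original edge-path images of $g$ — this is a mechanical unwinding of (II)(b) composed $n$ times together with the trivial final homeomorphism. For (II)(c) I would argue directionally: $h = h_1$ (or whichever rotation is named) acts on directions through $Dh$, and the per-stage direction maps $D g_k$ fix every direction $d_t$ except possibly those involved in the illegal turn folded at that stage; tracking which single index $j_n$ is the ``swallowed'' one at the last fold before closing the loop shows $Dh(d_t) = d_t$ for all $t \neq j_n$. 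Here one uses rotationlessness so that the fixed directions really are fixed, not merely periodic, and Lemma \ref{L:GateCollapsing}'s injectivity so that distinct $d_t$'s stay distinct all the way around.

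The main obstacle I expect is the rigidity claim that \emph{all} the folds are proper full folds of roses, rather than partial folds or folds producing a temporarily non-rose graph — in a general Stallings decomposition partial folds are the norm. Making this work seems to require genuinely using the hypothesis that the singularity data is extremal ($2r-1$ directions at one vertex, index $\tfrac32 - r$): one must show that any partial fold, or any fold creating a valence-$\geq 3$ second vertex, would either lower the number of gates/fixed directions (contradicting Lemma \ref{L:GateCollapsing} and the $2r-1$ count) or force a pNp to appear somewhere in the sequence (contradicting Lemma \ref{L:pNpFreePreserved}). Carefully matching up the gate structure at each $\Gamma_k$ with the fixed-direction count, and checking that the final homeomorphism's index permutation is trivial (so it can be absorbed), is where the real work lies; the rest is bookkeeping.
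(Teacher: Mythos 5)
Your high-level plan tracks the paper closely up to a point (pNp-free from ageometricity, rotationless from a power, Stallings folds, index bookkeeping and the final homeomorphism absorbed into a further power), but there is a genuine gap at the step where you reduce to the rose, and the partial-fold argument misidentifies the contradiction the paper actually uses.

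The reduction to the rose is not obtained by ``subdividing/collapsing a forest as in \cite{bh92}.'' The Bestvina--Handel collapse move applies to an $f$-invariant forest, and there is no reason the offending valence-$3$ vertex in the candidate graphs sits at the end of an invariant forest; collapsing an arbitrary spanning-tree edge need not preserve the train track property (backtracking can appear at the merged vertex), and certainly does not automatically preserve pNp-freeness or rotationlessness. The paper instead observes that a rotationless pNp-free representative with $2r-1$ fixed directions at one vertex forces the graph to be one of three explicit types $A_1$, $A_2$, $A_3$ (that is your Euler-characteristic/valence count, which is fine), and then \emph{rules out} $A_2$ and $A_3$ by analyzing what the first Stallings fold could possibly be: the illegal turn must sit at the valence-$3$ vertex $w$, and case-by-case each choice of fold there either immediately yields a rose (cases (A),(B)) or, after the single maximal fold, leaves all directions fixed at both vertices, forcing the remaining decomposition to be a homeomorphism and contradicting irreducibility (case (C), and the $A_3$ analysis). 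That is a qualitatively different argument from collapsing, and it is where the actual work of item (1) lives.

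The same mismatch appears in your treatment of partial folds once on the rose. You claim a partial fold would ``lower the number of gates/fixed directions (contradicting Lemma \ref{L:GateCollapsing})'' or ``force a pNp.'' Neither is the mechanism. Lemma \ref{L:GateCollapsing} only says $D(g^i)$ is injective on the images of the fixed directions; a partial fold does not by itself violate that, nor does it produce a pNp. What the paper shows is that a partial fold creates a new valence-$3$ vertex $w'$ at which the next fold would have to occur (else the decomposition terminates in a homeomorphism on a non-rose, contradicting that $h$ returns to the rose), and then all three turns at $w'$ are forced legal --- $\{b'',c''\}$ by maximality of the previous fold, and $\{b'',\bar d\}$, $\{c'',\bar d\}$ because illegality would have produced edge backtracking for $h$. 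So no further fold exists, contradicting irreducibility. You have the right lemmas in hand (Lemma \ref{L:pNpFreePreserved} to carry pNp-freeness and the train track property around cyclic shifts, Lemma \ref{L:GateCollapsing} to bound illegal turns at each stage to at most one), but the contradiction you need is with irreducibility of $h$, not with the gate count or with pNp-freeness directly. Filling in that case analysis --- at the level of the first fold on $A_2$/$A_3$ and again whenever a putative partial fold appears --- is the content of the proposition, and your sketch currently skips it.
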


\begin{proof} Since $\phi \in \mathcal{AFI}_r$, there exists a pNp-free tt representative $g$ of a power of $\phi$. Let $h=g^k: \Gamma \to \Gamma$ be rotationless. Then $h$ is also a pNp-free tt representative of some $\phi^R$ and $h$ (and all powers of $h$) satisfy (2)-(3). Since $h$ has no pNps (meaning $\mathcal{IW}(\phi^R) \cong \underset{\text{singularities v} \in \Gamma}{\bigsqcup} \mathcal{SW}(h;v)$ and, if $\Gamma$ is the rose, $\mathcal{SW}(h) \cong \mathcal{IW}(\phi^R)$ ), since $h$ fixes all its periodic directions, and since $\mathcal{IW}(\phi)$ (hence $\mathcal{IW}(\phi^R)$) is in $\mathcal{PI}_{(r;(\frac{3}{2}-r))}$, $\Gamma$ must have a vertex with $2r-1$ fixed directions. Thus, $\Gamma$ must be one of:
~\\
\vspace{-7mm}
\begin{figure}[H]
\centering
\noindent \includegraphics[width=3.3in]{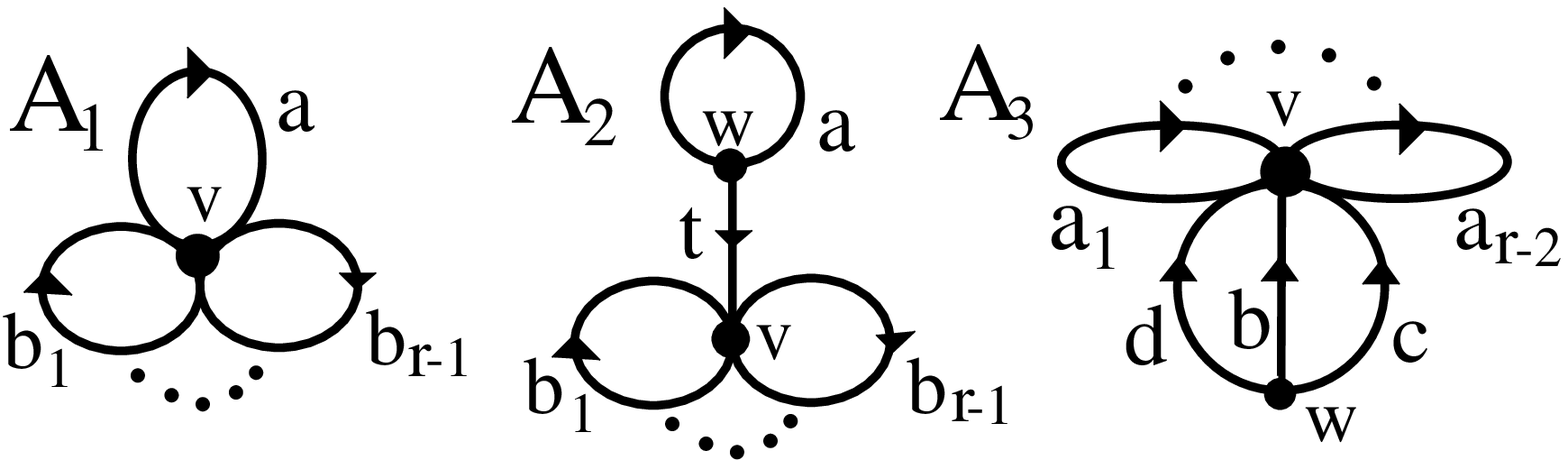}
\label{fig:HigherRankGraphChoices} \\[-3.75mm]
\end{figure}

If $\Gamma=A_1$, $h$ satisfies (3). We show, in this case, we also have the decomposition for (4). However, first we show $\Gamma$ cannot be $A_2$ or $A_3$ by ruling out all possibilities for folds in $h$'s Stallings decomposition.

If $\Gamma=A_2$, $v$ has to be the vertex with 2r-1 fixed directions. $h$ has an illegal turn unless it it is a homeomorphism, contradicting irreducibility. Note $w$ could not be mapped to $v$ in a way not forcing an illegal turn at $w$, as this would force either an illegal turn at $v$ (if $t$ were wrapped around some $b_i$) or we would have backtracking on $t$. Because all 2r-1 directions at $v$ are fixed by $h$, if $h$ had an illegal turn, it would have to occur at $w$ (no two fixed directions can share a gate).

The turns at $w$ are $\{a, \bar{a}\}$, $\{a, t\}$, and $\{\bar{a}, t\}$. By symmetry we only need to rule out illegal turns at $\{a, \bar{a}\}$ and $\{a, t\}$.

First, suppose $\{a, \bar{a}\}$ were illegal and the first fold in the Stallings decomposition. Fold $\{a, \bar{a}\}$ maximally to obtain $(A_2)_1$. Completely collapsing $a$ would change the homotopy type of $A_2$.
~\\
\vspace{-7mm}
\begin{figure}[H]
\centering
\noindent \includegraphics[width=3.5in]{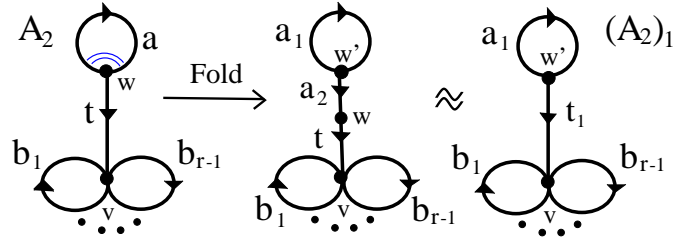}
\hspace{1.6in}\parbox{5.75in}{\caption{{\small{\emph{$a_1$ is the portion of $a$ not folded, $a_2$ is the edge created by the fold, $w'$ is the vertex created by the fold, and $t_1$ is $a_2 \cup t$ without the (now unnecessary) vertex $w$}}}}}
\label{fig:RosewithStemPetalFold} \\[-3mm]
\end{figure}

Let $h_1: (A_2)_1 \to (A_2)_1$ be the induced map of [BH92]. Since the fold of $\{a, \bar{a}\}$ was maximal, $\{a_1, \overline{a_1}\}$ must be legal. Since $h$ was a train track, $\{t_1, a_1\}$ and $\{t_1, \overline{a_1}\}$ would also be legal. But then $h_1$ would fix all directions at both vertices of $\Gamma_1$ (since it still would need to fix all directions at $v$). This would make $h_1$ a homeomorphism, again contradicting irreducibility. So $\{a, \bar{a}\}$ could not have been the first turn folded.  We are left to rule out $\{a, t\}$.

Suppose the first turn folded in the Stallings decomposition were $\{a, t\}$. Fold $\{a, t\}$ maximally to obtain $(A_2)'_1$. Let $h_1'\colon (A_2)'_1 \to (A_2)'_1$ be the induced map of [BH92]. Either \newline
\indent \indent \indent A. all of $t$ was folded with a full power of $a$; \newline
\indent \indent \indent B. all of $t$ was folded with a partial power of $a$; or \newline
\indent \indent \indent C. part of $t$ was folded with either a full or partial power of $a$.

If (A) or (B) held, $(A_2)_1'$ would be a rose and $h_1'$ would give a representative on the rose, returning us to the case of $A_1$. So we just need to analyze (C).

Consider first (C), i.e. suppose that part of $t$ is folded with either a full or partial power of $a$:
~\\
\vspace{-6.8mm}
\begin{figure}[H]
\centering
\noindent \includegraphics[width=3.8in]{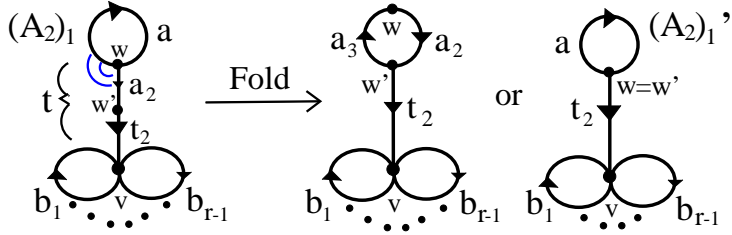}
\hspace{1.6in}\parbox{5.8in}{\caption{{\small{\emph{Of the two scenarios on the right, the leftmost is where the fold ends in the middle of $a$. $a_2$ is a possible portion of $a$ folded with the portion of $t$, $a_3$ would be the portion of $a$ not folded with $t$, and $t_2$ would be the portion of $t$ not folded with $a$}}}}}
\label{fig:RoseWithStemChoices2} \\[-3mm]
\end{figure}

If $h=h^1 \circ g_1$, where $g_1$ is the single fold performed thus far, then $h^1$ could not identify any directions at $w'$: identifying $a_2$ and $t_2$ would lead to $h$ back-tracking on $t$; identifying $t_2$ and $\bar{a}$ would lead to $h$ back-tracking on $a$; and $h^1$ could not identify $t_2$ and $\overline{a_3}$ because the fold was maximal. But then all directions of $(A_2)_{1}'$ would be fixed by $h^1$, making $h^1$ a homeomorphism and the decomposition complete. However, this would make $h$ consist of the single fold $g_1$ and a homeomorphism, contradicting $h$'s irreducibility. Thus, all cases where $\Gamma = A_2$ are either impossible or yield the representative on the rose for (1).

Now assume $\Gamma=A_3$. $v$ must have $2r-1$ fixed directions. As with $A_2$, since $h$ must fix all directions at $v$, if $h$ had an illegal turn (which it still has to) it would be at $w$. Without losing generality assume $\{b, d\}$ is an illegal turn and that the first Stallings fold maximally folds $\{b, d\}$. Folding all of $b$ and $d$ would change the homotopy type. So assume (again without generality loss) either:
~\\
\vspace{-6.5mm}
\begin{itemize}
\item all of $b$ is folded with part of $d$ or \\[-6mm]
\item only proper initial segments of $b$ and $d$ are folded with each other. \\[-6mm]
\end{itemize}
\noindent If all of $b$ is folded with part of $d$, we get a pNp-free tt on the rose. So suppose only proper initial segments of $b$ and $d$ are identified. Let $h_1\colon (A_3)_1 \to (A_3)_1$ be the \cite{bh92} induced map.
~\\
\vspace{-7.25mm}
\begin{figure}[H]
\centering
\noindent \includegraphics[width=4.8in]{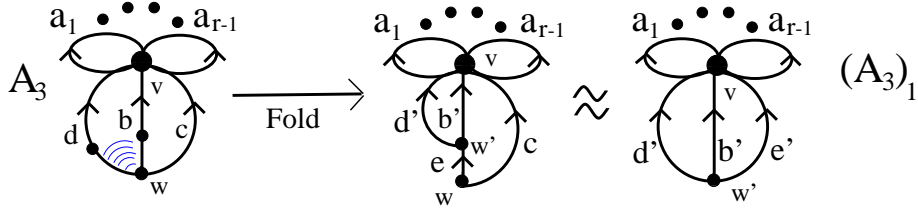}
~\\
\vspace{-2mm}
\parbox{6in}{\caption{{\small{\emph{$e$ was created by the fold and $e'$ is $\bar{e} \cup c$ without the (now unnecessary) vertex $w$}}}}}
\label{fig:TheFold} \\[-3mm]
\end{figure}

The new vertex $w'$ has 3 distinct gates: $\{b', d'\}$ is legal since the fold was maximal and $\{b', \bar{e}\}$ and $\{d', \bar{e}\}$ must be legal or $h$ would have back-tracked on $b$ or $d$, respectively. This leaves that the entire decomposition is a single fold and a homeomorphism, again contradicting $h$'s irreducibility.

We have ruled out $A_3$ and proved for (1) that we have a pNp-free representative on the rose of some $\psi=\phi^R$. We now prove (4).

Let $h$ be the pNp-free tt representative of $\phi^R$ on the rose and $\Gamma_0 \xrightarrow{g_1} \Gamma_1 \xrightarrow{g_2} \cdots \xrightarrow{g_{n-1}} \Gamma_{n-1} \xrightarrow{g_n} \Gamma_n$ the Stallings decomposition. Each $g_i$ is either an elementary fold or locally injective (thus a homeomorphism). We can assume $g_n$ is the only homeomorphism. Let $h^i=g_n \circ \dots \circ g_{i+1}$. Since $h$ has precisely $2r-1$ gates, $h$ has precisely one illegal turn.  We first determine what $g_1$ could be.  $g_1$ cannot be a homeomorphism or $h=g_1$, making $h$ reducible.  So $g_1$ must maximally fold the illegal turn. Suppose the fold is a proper full fold. (If it is not, see the analysis below of cases of improper or partial folds.)
~\\
\vspace{-7.2mm}
\begin{figure}[H]
\centering
\noindent \includegraphics[width=3.8in]{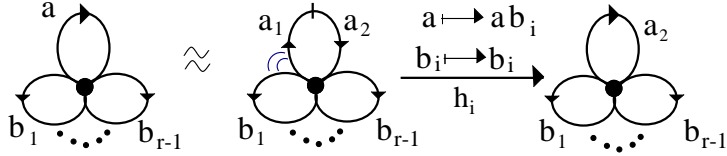}
~\\
\vspace{-2mm}
\parbox{6in}{\caption{\small{\emph{Proper full fold}}}
\label{fig:HigherRankRoseProperFullFold}} \\[-3mm]
\end{figure}

By Lemma \ref{L:GateCollapsing}, $h^1$ can only have one turn $\{d_1, d_2\}$ where $Dh^1(\{d_1, d_2\})$ is degenerate (we call such a turn an \emph{order-1 illegal turn} for $h^1$). If it has no order-1 illegal turn, $h^1$ is a homeomorphism and the decomposition is determined. So suppose $h^1$ has an order-1 illegal turn (with more than one, $h$ could not have 2r-1 distinct gates). The next Stallings fold must maximally fold this turn. With similar logic, we can continue as such until either $h$ is obtained, in which case the desired decomposition is found, or until the next fold is not a proper full fold. The next fold cannot be an improper full fold or the homotopy type would change. Suppose after the last proper full fold we have:
~\\
\vspace{-7.25mm}
\begin{figure}[H]
\centering
\noindent \includegraphics[width=1.2in]{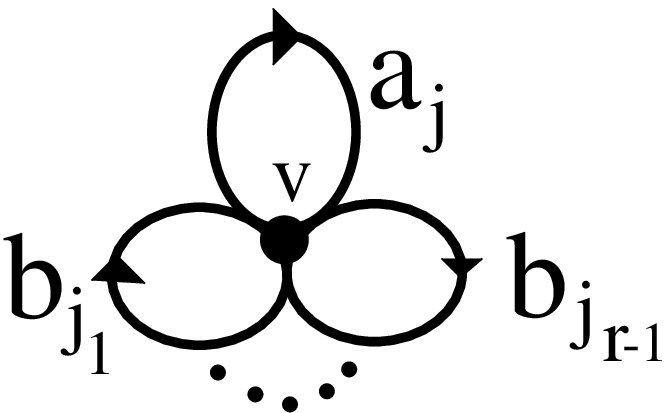}
\label{fig:AfterLastPFF} \\[-3mm]
\end{figure}

Without losing generality, suppose the illegal turn is $\{a_j, \overline{a_j}\}$.  Maximally folding $\{a_j, \overline{a_j}\}$ yields $A_2$, as above. This cannot be the final fold in the decomposition since $A_1$ is not homeomorphic to $A_2$. By Lemma \ref{L:pNpFreePreserved}, the illegal turn must be at $w$. The fold of Figure 3 cannot be performed, as our fold was maximal. If the fold of Figure \ref{fig:HigherRankRoseProperFullFold} were performed, there would be backtracking on $a$.

Now suppose, without loss of generality, that the first Stallings fold that is not a proper full fold is a partial fold of $b'$ and $c'$, as in the following figure.
~\\
\vspace{-7.1mm}
\begin{figure}[H]
\centering
\noindent \includegraphics[width=3in]{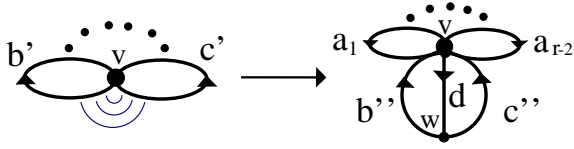}
~\\
\vspace{-2mm}
\parbox{5.6in}{\caption{{\small{\emph{$d$ is the edge created by folding initial segments of $b'$ and $c'$, $b''$ is the terminal segment of $b'$ not folded, and $c''$ is the terminal segment of $c'$ not folded}}}}}
\label{fig:ImproperFold} \\[-2.85mm]
\end{figure}

As in the case of $\Gamma=A_3$ above, the next fold has to be at $w$ or the next generator would be a homeomorphism, contradicting that the image of $h$ is a rose, while $A_3$ is not a rose. Since the previous fold was maximal, the next fold cannot be of $\{b'', c''\}$. Also, $\{b'', \bar{d}\}$ and $\{c'', \bar{d}\}$ cannot be illegal turns or $h$ would have had edge backtracking. Thus, $h_i$ was not possible in the first place, meaning that all folds in the Stallings decomposition must be proper full folds between roses, proving (4).

Since all Stallings folds are proper full folds of roses, for each $1 \leq k \leq n-1$, one can index $\mathcal{E}_k = \mathcal{E}(\Gamma_k)$ as $\{E_{(k,1)},\overline{E_{(k,1)}}, E_{(k,2)}, \overline{E_{(k,2)}}, \dots, E_{(k,r)}, \overline{E_{(k,r)}} \} = \{e_{(k,1)}, e_{(k,2)}, \dots, e_{(k,2r-1)}, e_{(k,2r)}\}$ so that \newline
\indent(a) $g_k\colon e_{k-1,j_k} \mapsto e_{k,i_k} e_{k,j_k}$ where $e_{k-1,j_k} \in \mathcal{E}_{k-1}$, $e_{k,i_k}, e_{k,j_k} \in \mathcal{E}_k$ and \newline
\indent (b) $g_k(e_{k-1,i})=e_{k,i}$ for all $e_{k-1,i} \neq e_{k-1,j_k}^{\pm 1}$. \newline
\noindent Suppose we similarly index the directions $D(e_{k,i}) = d_{k,i}$.

Let $g_n=h'$ be the Stallings decomposition's homeomorphism and suppose its edge index permutation were nontrivial. Some power $p$ of the permutation would be trivial. Replace $h$ by $h^p$, rewriting $h^p$'s decomposition as follows. Let $\sigma$ be the permutation defined by $h'(e_{n-1,i})= e_{n-1,\sigma(i)}$ for each $i$. For $n \leq k \leq 2n-p$, define $g_k$ by $g_k: e_{k-1,\sigma^{-s+1}(j_t)} \mapsto e_{k,\sigma^{-s+1}(i_t)} e_{k,\sigma^{-s+1}(j_t)}$ where $k=sp+t$ and $0 \leq t \leq p$. Adjust the corresponding proper full folds accordingly. This decomposition still gives $h^p$, but now the homeomorphism's edge index permutation is trivial, making it unnecessary for the decomposition. \qedhere
\end{proof}

\vskip8pt

Representatives with a decomposition satisfying (I)-(II) of Proposition \ref{P:IdealDecomposition} will be called and \emph{ideally decomposable ($\mathcal{ID}$)} representative with an \emph{ideal decomposition}.

\vskip5pt

\begin{nt}{\label{N:IdealDecompositions}}
\textbf{(Ideal Decompositions)} \newline
We will consider the notation of the proposition standard for an ideal decomposition.  Additionally,
\begin{enumerate}[itemsep=-1.5pt,parsep=3pt,topsep=3pt]
\item We denote $e_{k-1,j_k}$ by $e^{pu}_{k-1}$, denote $e_{k,j_k}$ by $e^u_k$, denote $e_{k,i_k}$ by $e^a_k$, and denote $e_{k-1,i_{k-1}}$ by $e^{pa}_{k-1}$.
\item $\mathcal{D}_k$ will denote the set of directions corresponding to $\mathcal{E}_k$.
\item $f_k:= g_k \circ \cdots \circ g_1 \circ g_n \circ  \cdots \circ g_{k+1}: \Gamma_k \to \Gamma_k$.
\item $$g_{k,i}:=
\begin{cases}
g_k \circ \cdots \circ g_i\colon \Gamma_{i-1} \to \Gamma_k \text{ if $k>i$}\text{ and } \\
 g_k \circ \cdots \circ g_1 \circ g_n \circ  \cdots \circ g_i \text{ if $k<i$}
\end{cases}.$$
\item $d^u_k$ will denote $D_0(e^u_k)$, sometimes called the \emph{\textbf{u}nachieved direction} for $g_k$, as it is not in $Im(Dg_k)$.
\item $d^a_k$ will denote $D_0(e^a_k)$, sometimes called the \emph{twice-\textbf{a}chieved direction} for $g_k$, as it is the image of both $d^{pu}_{k-1}$ ($=D_0(e_{k-1,j_k})$) and $d^{pa}_{k-1}$ ($=D_0(e_{k-1,i_k})$) under $Dg_k$. $d^{pu}_{k-1}$ will sometimes be called the \emph{\textbf{p}re-unachieved direction} for $g_k$ and $d^{pa}_{k-1}$ the \emph{\textbf{p}re-twice-achieved direction} for $g_k$.
\item $G_k$ will denote the ltt structure $G(f_k)$
\item $G_{k,l}$ will denote the subgraph of $G_l$ containing
~\\
\vspace{-6.5mm}
{\begin{itemize}
\item all black edges and vertices (given the same colors and labels as in $G_l$) and \\[-5.5mm]
\item all colored edges representing turns in $g_{k,l}(e)$ for some $e \in \mathcal{E}_{k-1}$. \\[-5.5mm]
\end{itemize}}
\item For any $k,l$, we have a direction map $Dg_{k,l}$ and an induced map of turns $Dg_{k,l}^t$.  The \emph{induced map of ltt Structures} $Dg_{k,l}^T: G_{l-1} \mapsto G_k$ (which we show below exists) is such that
~\\
\vspace{-6.5mm}
{\begin{itemize}
\item the vertex corresponding to a direction $d$ is mapped to the vertex corresponding to $Dg_{k,l}(d)$, \\[-5mm]
\item the colored edge [$d_1, d_2$] is mapped linearly as an extension of the vertex map to the edge [$Dg_{k,l}^t(\{d_1, d_2 \})$] $=$ [$Dg_{k,l}(d_1), Dg_{k,l}(d_2)$], and \\[-5mm]
\item the interior of the black edge of $G_{l-1}$ corresponding to the edge $E \in \mathcal{E}(\Gamma_{l-1})$ is mapped to the interior of the smooth path in $G_k$ corresponding to $g(E)$.
\end{itemize}}

\begin{ex}{\label{Ex:InducedMap}} We describe an induced map of rose-based ltt structures for $g_2: x \mapsto xz$.
~\\
\vspace{-7mm}
\begin{figure}[H]
\centering
\includegraphics[width=2.7in]{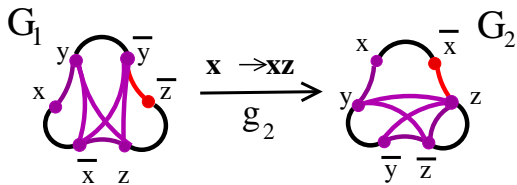}
\hspace{1.7in}\parbox{6.5in}{\caption{\small{\emph{The induced map for $g_2: x \mapsto xz$ sends vertex $\bar{x}$ of $G_1$ to vertex $\bar{z}$ of $G_2$ and every other vertex of $G_1$ to the identically labeled vertex of $G_2$. [$y$] in $G_1$ maps to [$y$] in $G_2$, [$z$] in $G_1$ maps to [$z$] in $G_2$, and [$x$] in $G_1$ maps to $[x] \cup [\bar{x}, z] \cup [z]$ in $G_2$. The purple edge $[\bar{x}, y]$ in $G_1$ maps to the purple edge $[\bar{z}, y]$ in $G_2$, the purple edge $[\bar{x}, \bar{y}]$ in $G_1$ maps to the purple edge $[\bar{z}, \bar{y}]$ in $G_2$, $[\bar{x}, z]$ in $G_1$ maps to the purple edge $[\bar{z}, z]$ in $G_2$, and each other purple edge in $G_1$ is sent to the identically labeled purple edge in $G_2$. The red edge $[\bar{z}, \bar{y}]$ in $G_1$ maps to the purple edge $[\bar{z}, \bar{y}]$ in $G_2$. }}}}
\end{figure}
\end{ex}
\item $\mathcal{C}(G_k)$ will denote the subgraph of $G_k$, coming from $\mathcal{LW}(f_k)$ and containing all colored (red and purple) edges of $G_k$.
\item Sometimes we use $\mathcal{PI}(G_k)$ to denote the purple subgraph of $G_k$ coming from $\mathcal{SW}(f_k)$.
\item $Dg_{k,l}^C$ will denote the restriction (which we show below exists) to $\mathcal{C}(G_{l-1})$ of $Dg_{k,l}^T$.
\item If we additionally require $\phi \in \mathcal{AFI}_r$ and $\mathcal{IW}(\phi) \in \mathcal{PI}_{(r;(\frac{3}{2}-r))}$, then we will say $g$ \emph{has $(r;(\frac{3}{2}-r))$ potential}. (By saying $g$ \emph{has $(r;(\frac{3}{2}-r))$ potential}, it will be implicit that, not only is $\phi \in \mathcal{AFI}_r$, but $\phi$ is ideally decomposed, or at least $\mathcal{ID}$).
\end{enumerate}
\end{nt}

\begin{rk} For typographical clarity, we sometimes put parantheses around subscripts. We refer to $E_{k,i}$ as $E_i$, and $\Gamma_k$ as $\Gamma$, for all $k$ when $k$ is clear.
\end{rk}

\section{Birecurrency Condition}{\label{S:bc}}

Proposition \ref{P:BC} of this section gives a necessary condition for an ideal Whitehead graph to be achieved. We use it to prove Theorem \ref{T:MainTheorem}a, and implicitly throughout this paper and \cite{p12d}.

Definitions of lines and the attracting lamination for a $\phi \in Out(F_r)$ will be as in \cite{bfh00}. A complete summary of relevant definitions can be found in \cite{p12a}. We use \cite{bfh00} that a $\phi \in \mathcal{FI}_r$ has a unique attracting lamination (we denote by $\Lambda_{\phi}$) and that attracting laminations contain birecurrent leaves.

Note that there is both notational and terminology variance in the name assigned to an attracting lamination. It is called a \emph{stable lamination} in \cite{bfh97} and is sometimes also referred to in the literature as an \emph{expanding lamination}. In \cite{bfh97} and \cite{bfh00}, it is denoted $\Lambda^+_{\phi}$, or just $\Lambda^+$, while the authors of \cite{hm11} used the notation $\Lambda_-$, more consistent with dynamical systems terminology.

\begin{df} A \emph{train track (tt) graph} is a finite graph $G$ satisfying:
~\\
\vspace{-\baselineskip}
\begin{description}
\item[tt1:] $G$ has no valence-1 vertices; \\[-6mm]
\item[tt2:] each edge of $G$ has 2 distinct vertices (single edges are never loops); and \\[-6mm]
\item[tt3:] the edge set of $G$ is partitioned into two subsets, $\mathcal{E}_b$ (the ``black'' edges) and $\mathcal{E}_c$ (the ``colored'' edges), such that each vertex is incident to at least one $E_b \in \mathcal{E}_b$ and at least one $E_c \in \mathcal{E}_c$. \\[-6mm]
\end{description}

\noindent tt graphs are \emph{equivalent} that are isomorphic as graphs via an isomorphism preserving the edge partition. And a path in a tt graph is \emph{smooth} that alternates between edges in $\mathcal{E}_b$ and edges in $\mathcal{E}_c$.
\end{df}

\begin{ex}
The ltt structure $G(g)$ for a pNp-free representative $g$ on the rose is a train track graph where the black edges are in $\mathcal{E}_b$ and $\mathcal{E}_c$ is the edge set of $\mathcal{C}(G(g))$.
\end{ex}

\begin{df} A smooth tt graph is \emph{birecurrent} if it has a locally smoothly embedded line crossing each edge infinitely many times as $\bold{R}\to \infty$ and as $\bold{R}\to -\infty$. \end{df}

\begin{prop}{\label{P:BC}}\textbf{(Birecurrency Condition)} The lamination train track structure for each train track representative of each fully irreducible outer automorphism is birecurrent. \end{prop}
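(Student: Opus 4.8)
The plan is to produce a single locally smoothly embedded line in $G(g)$ that crosses every edge — black and colored — infinitely often in both directions, and to obtain it as the realization in $G(g)$ of a carefully chosen leaf of the attracting lamination $\Lambda_\phi$. So the proof has two ingredients: a dictionary converting leaves of $\Lambda_\phi$ into smooth lines of $G(g)$, and a Perron--Frobenius/minimality argument showing that some birecurrent leaf records all of the edges of $G(g)$.

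First I would set up the dictionary, using the correspondence described at the end of Subsection 2.4. A reduced bi-infinite edge-path $\ell = \dots e_{-1} e_0 e_1 \dots$ in $\Gamma$ determines the line $[\dots, d_0, \overline{d_0}, d_1, \dots]$ in $G(g)$ with $d_i = D_0(e_i)$; it alternates between the black edges $[e_i]$ and the colored edges for the turns $\{\overline{d_i}, d_{i+1}\}$, and since $\ell$ is reduced every such turn is nondegenerate, so the line is genuinely an edge-path of $G(g)$ and is locally injective — that is, it is a locally smoothly embedded line in $G(g)$. It lies in $G(g)$ precisely when every turn it crosses is an edge of $\mathcal{LW}(g)$, i.e. a turn taken by some $g^k(e)$, $e\in\mathcal{E}(\Gamma)$; this holds for any leaf of $\Lambda_\phi$, since every finite subpath of a leaf occurs inside some $g^k(e)$ (the standard description of $\Lambda_\phi$ from \cite{bfh00}), in particular every length-two subpath, hence every turn it crosses.

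Next I would choose the leaf and count. By \cite{bfh00}, $\Lambda_\phi$ is minimal and contains a birecurrent leaf $\ell$, so every finite subpath of $\ell$ recurs in $\ell$ infinitely often in both directions. Because $\phi$ is fully irreducible the transition matrix of $g$ is irreducible, so there is an $N$ with $g^{N}(e)$ crossing every edge of $\Gamma$ for every $e\in\mathcal{E}(\Gamma)$; and since $\mathcal{LW}(g)$ has only finitely many edges, there is an $M$ so that each edge of $\mathcal{LW}(g)$ is a turn taken by some $g^{m}(e')$ with $m\le M$. Then for any edge $e$, $g^{N+M}(e)=g^{M}\bigl(g^{N}(e)\bigr)$ contains $g^{m}(e')$ as a subpath for every $e'$ and every $m\le M$, hence it crosses every black edge $[e']$ and every colored edge of $G(g)$. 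Finally $g^{N+M}(e)$ is a leaf segment, so by minimality of $\Lambda_\phi$ it is a subpath of $\ell$; by birecurrence of $\ell$ it recurs in $\ell$ infinitely often in both directions. Therefore the realization of $\ell$ in $G(g)$ crosses every edge of $G(g)$ infinitely often in both directions, so $G(g)$ is birecurrent.

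The step requiring the most care — and the place where I expect a referee to push — is the inference ``$g^{N+M}(e)$ is a leaf segment, hence a subpath of \emph{our particular} birecurrent leaf $\ell$''; this rests on minimality of $\Lambda_\phi$ (all leaves have the same finite-subpath vocabulary), which must be quoted correctly from \cite{bfh00}. One should also check the realized line is honestly locally embedded rather than merely immersed with backtracking, which is exactly where reducedness of $\ell$ (no degenerate turn is crossed) is used, as noted above. The multi-vertex case, via the Remark after Example \ref{Ex:G(g)}, is handled by the identical argument run vertex by vertex and edge by edge: a train track representative of a fully irreducible still has an irreducible transition matrix and no invariant proper subgraph, so the same $N$ and $M$ exist.
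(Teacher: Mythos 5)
Your proof is correct and takes essentially the same route as the paper's: realize a birecurrent leaf of $\Lambda_\phi$ as a smooth line in $G(g)$ (the paper's Lemma~\ref{L:SmoothPathsforLeaves}), show that line crosses every edge of $G(g)$, and invoke birecurrence of the leaf to get infinitely many crossings in each direction. Your Perron--Frobenius/minimality step producing the single segment $g^{N+M}(e)$ that visits everything is a repackaging of what the paper obtains edge-by-edge from Lemma~\ref{L:LeafTurns} via density of periodic leaves.
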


Our proof requires the following lemmas relating $\mathcal{LW}(g)$ and realization of leaves of $\Lambda_{\phi}$. The proofs use lamination facts from \cite{bfh97} and \cite{hm11}.

\begin{lem}{\label{L:LeafTurns}} Let $g: \Gamma \to \Gamma$, with $(r;(\frac{3}{2}-r))$ potential, represent $\phi \in Out(F_r)$. The only possible turns taken by the realization in $\Gamma$ of a leaf of $\Lambda_{\phi}$ are those giving edges in $\mathcal{LW}(g)$. Conversely, each turn represented by an edge of $\mathcal{LW}(g)$ is a turn taken by some (hence all) leaves of $\Lambda_{\phi}$ (as realized in $\Gamma$).
\end{lem}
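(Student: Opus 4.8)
The plan is to deduce the lemma from the standard description of the leaves of the attracting lamination in terms of the forward iterates $g^k(e)$ of the edges of $\Gamma$. Recall from \cite{bfh97} (see also \cite{hm11}, and the summary in \cite{p12a}) that, since $\phi$ is fully irreducible and $g$ is a train track representative, $\Lambda_\phi$ is the unique attracting lamination, it is minimal, and the realization in $\Gamma$ of any leaf of $\Lambda_\phi$ is a legal bi-infinite edge-path; moreover a finite edge-path $\gamma$ occurs as a subpath of the realization of some leaf of $\Lambda_\phi$ \emph{if and only if} $\gamma$ occurs as a subpath of $g^k(e)$ for some $e \in \mathcal{E}(\Gamma)$ and some $k \geq 1$. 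I will also use the elementary fact that each $g^k(e)$ is itself a legal path: if $g^k(e)$ crossed an illegal turn, then for $N$ large $Dg^N$ would identify the two directions of that turn and $g^{k+N}(e) = g^N(g^k(e))$ would backtrack, contradicting that $g$ is a train track map. (This legality is implicit in the ``if'' half of the displayed characterization.)

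For the ``only possible turns'' assertion, suppose the realization of a leaf $\ell$ of $\Lambda_\phi$ crosses a turn $\tau = \{d_1,d_2\}$ at a vertex $v$ of $\Gamma$. Then $\tau$ is already crossed by a two-edge subpath of $\ell$, namely the two edges of $\ell$ adjacent to the relevant passage through $v$. By the characterization above, this two-edge subpath is a subpath of $g^k(e)$ for some $e \in \mathcal{E}(\Gamma)$ and some $k$, so $g^k(e)$ crosses $\tau$; by Definition \ref{D:whiteheadgraphs}, $\tau$ therefore determines an edge of $\mathcal{LW}(g)$.

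For the converse, suppose $\tau = \{d_1,d_2\}$ determines an edge of $\mathcal{LW}(g)$, so that $\tau$ is crossed by $g^{k_0}(e_0)$ for some $e_0 \in \mathcal{E}(\Gamma)$ and $k_0 \geq 1$. A turn is crossed at an interior vertex of any path crossing it, so there is a two-edge subpath $\gamma$ of $g^{k_0}(e_0)$ crossing $\tau$. Since $\gamma$ is a subpath of an iterate of an edge, the characterization above shows $\gamma$ occurs as a subpath of the realization of some leaf $\ell$ of $\Lambda_\phi$, and that leaf crosses $\tau$. Finally, minimality of $\Lambda_\phi$ forces every leaf to contain, as subpaths, all finite edge-paths that occur in any single leaf (each leaf is dense in $\Lambda_\phi$), so $\tau$ is in fact crossed by every leaf of $\Lambda_\phi$; this gives the ``hence all'' in the statement.

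I expect essentially all of the content to sit in the displayed characterization of the finite subpaths of leaves of $\Lambda_\phi$, and specifically in its ``if'' direction, which asserts that an arbitrary subpath of an iterate $g^k(e)$ actually embeds in an honest bi-infinite leaf. That is where the lamination technology of \cite{bfh97} and \cite{hm11} (the weak-closure description of $\Lambda_\phi$, minimality, and legality of the iterates) is doing the work; once it is available, both halves of the lemma are the one-line arguments above. An alternative route for the converse, not using the ``if'' direction, would be to fix a birecurrent leaf $\ell_0$ of $\Lambda_\phi$, use the self-similar tiling $\ell_0 = g^{k_0}(\ell_0')$ of $\ell_0$ into the blocks $g^{k_0}(e)$ indexed by the edges $e$ of another leaf $\ell_0'$ of $\Lambda_\phi$, and observe that $\ell_0'$, being a leaf, crosses $e_0$; hence $\ell_0$ contains a copy of $g^{k_0}(e_0)$ and so crosses $\tau$.
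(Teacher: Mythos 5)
Your proof is correct and follows essentially the same logical route as the paper's. Both arguments pass back and forth between turns crossed by leaves of $\Lambda_\phi$ and turns crossed by the iterates $g^k(e)$, and both use density of leaves (minimality) to upgrade ``some leaf'' to ``all leaves.'' The only difference is one of packaging: you invoke the \cite{bfh97} characterization of finite leaf-subpaths as a black box, whereas the paper instantiates it concretely by observing that irreducibility gives each edge an interior fixed point, whose iterated neighborhoods produce explicit periodic leaves of the form $\cdots g^k(E_i)\cdots$; the ``alternative route'' you sketch at the end (fix a leaf $\ell_0$ and use the tiling $\ell_0 = g^{k_0}(\ell_0')$) is in fact the closer match to what the paper does. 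No gap.
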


\begin{proof} First note that, since $g$ is irreducible, each $E_i \in \mathcal{E}(\Gamma)$ has an interior fixed point. Thus, for each $E_i \in \mathcal{E}(\Gamma)$, there is a periodic leaf of $\Lambda_{\phi}$ obtained by iterating a neighborhood of a fixed point of $E_i$.

Consider any turn $\{d_1, d_2\}$ taken by the realization in $\Gamma$ of a leaf of $\Lambda_{\phi}$. Since periodic leaves are dense in the lamination, each periodic leaf of the lamination contains a subpath taking the turn. In particular, the leaf obtained by iterating a neighborhood of a fixed point of $e$ for any $e \in \mathcal{E}(\Gamma)$ takes the turn, so $\overline{e_1} e_2$ (where $D_0(e_1)=d_1$ or $D_0(e_2)=d_2$) is contained in some $g^k(e)$, for each $e \in \mathcal{E}(\Gamma)$.  So $\{d_1, d_2\}$ is represented by an edge in $\mathcal{LW}(g)$, concluding the forward direction.

If $[d_1, d_2]$ is an edge of $\mathcal{LW}(g)$ then, for some $i$ and $k$, $\overline{e_1} e_2$ is a subpath of $g^k(E_i)$. Again, each $E_i \in \mathcal{E}(\Gamma)$ has an interior fixed point and hence $\Lambda_{\phi}$ has a periodic leaf obtained by iterating a neighborhood of $E_i$'s fixed point. $g^k(E_i)$ is a subpath of this periodic leaf and (by periodic leaf density) of every leaf of $\Lambda_{\phi}$. Since the leaves contain $g^k(E_i)$ as a subpath, they contain $\overline{e_1} e_2$ as a subpath, so $\{d_1, d_2\}$. \qedhere
\end{proof}

\begin{lem}{\label{L:SmoothPathsforLeaves}} Let $g\colon\Gamma \to \Gamma$ represent $\phi \in \mathcal{AFI}_r$.  Then $G(g)$ contains a smooth path corresponding to the realization in $\Gamma$ of each leaf of $\Lambda_{\phi}$. \end{lem}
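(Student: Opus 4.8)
The plan is to fix a leaf $\ell$ of $\Lambda_{\phi}$, take its realization in $\Gamma$ as a bi-infinite legal line $L = \cdots e_{-1}e_0e_1 \cdots$ (the realization of a lamination leaf in a train track representative is a legal, hence reduced, edge path), and show that the combinatorial object associated to $L$ in the ltt-structure definition of Subsection~2.4, namely $\gamma = [\cdots, d_{-1}, \overline{d_{-1}}, d_0, \overline{d_0}, d_1, \cdots]$ with $d_i = D_0(e_i)$, is an honest smooth path in $G(g)$. This requires two things: that every edge listed in $\gamma$ actually occurs in $G(g)$, and that $\gamma$ alternates between colored and black edges. The black edges cost nothing: for each $e_i \in \mathcal{E}(\Gamma)$ the black edge $[e_i]=[d_i,\overline{d_i}]$ is part of $\Gamma_N$, hence of $G(g)$, by construction. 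So the only thing in question is the colored edges $[\overline{d_i}, d_{i+1}]$. Since the colored subgraph $\mathcal{C}(G(g))$ is exactly $\mathcal{CW}(g)$, i.e. $\mathcal{LW}(g)$ recolored, the edge $[\overline{d_i}, d_{i+1}]$ lies in $G(g)$ precisely when the turn $\{\overline{e_i}, e_{i+1}\}$ that $L$ crosses between $e_i$ and $e_{i+1}$ is represented by an edge of $\mathcal{LW}(g)$. Thus the whole lemma reduces to: \emph{every turn taken by the realization in $\Gamma$ of a leaf of $\Lambda_{\phi}$ is represented by an edge of $\mathcal{LW}(g)$.}

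To supply that reduced statement I would invoke the forward direction of Lemma~\ref{L:LeafTurns}. Since that lemma is stated under the stronger ``$(r;(\frac{3}{2}-r))$ potential'' hypothesis, I would either note that its forward-direction argument used only the irreducibility of $g$ and the density of periodic leaves in $\Lambda_{\phi}$ — both available here — or re-derive it directly from the standard lamination facts of \cite{bfh97}: every finite subpath of a leaf of $\Lambda_{\phi}$ occurs inside $g^k(e)$ for some $e \in \mathcal{E}(\Gamma)$ and some $k \geq 1$, so in particular each turn crossed by $L$ appears in some $g^k(e)$, which is exactly the defining condition for it to be an edge of $\mathcal{LW}(g)$. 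Granting this, every $[\overline{d_i}, d_{i+1}]$ is present in $G(g)$, so $\gamma$ is a bona fide bi-infinite path running $[e_i]$, then $[\overline{d_i}, d_{i+1}]$, then $[e_{i+1}]$, and so on; it therefore alternates black and colored edges and is smooth; and by the definition of ``the path in $\Gamma$ corresponding to $\gamma$'' in Subsection~2.4, that path is exactly $L$, the realization of $\ell$.

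I do not anticipate a genuine obstacle here — once the turn-containment statement is in hand the argument is essentially bookkeeping together with the standard structure of $\Lambda_{\phi}$. The two points that deserve care are: (i) confirming that the turn-containment statement is genuinely available under the present hypothesis $\phi \in \mathcal{AFI}_r$ rather than the full ``$(r;(\frac{3}{2}-r))$ potential'' of Lemma~\ref{L:LeafTurns}, which is why I would prefer to lean on the underlying \cite{bfh97} fact rather than cite that lemma verbatim; and (ii) the justification that a leaf's realization in a train track representative is a reduced (legal) line, so that the turns it crosses are genuine non-degenerate turns and $\gamma$ really exhibits the required black/colored alternation with no collapsed colored ``edge.''
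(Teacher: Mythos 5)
Your proof is correct and takes essentially the same route as the paper: both build the candidate path $\gamma$ in $G(g)$ directly from the edge sequence of the leaf's realization, note that the black edges come for free, and reduce the existence of the colored edges $[\overline{d_i},d_{i+1}]$ to the forward direction of Lemma~\ref{L:LeafTurns}. Your caveat about the hypothesis mismatch is astute and worth keeping: the paper cites Lemma~\ref{L:LeafTurns} without comment even though it is stated under the stronger ``$(r;(\frac{3}{2}-r))$ potential'' hypothesis, but as you observe, its forward-direction argument only uses irreducibility of $g$ and density of periodic leaves in $\Lambda_\phi$, both of which hold for any train track representative of a fully irreducible, so nothing is lost.
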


\begin{proof} Consider the realization $\lambda$ of a leaf of $\Lambda_{\phi}$ and any single subpath $\sigma=e_1 e_2 e_3$ in $\lambda$. If it exists, the representation in $G(g)$ of $\sigma$ would be the path $[d_1, \overline{d_1}, d_2, \overline{d_2}, d_3, \overline{d_3}]$. Lemma \ref{L:LeafTurns} tells us $[\overline{d}_1, d_2]$ and $[\overline{d_2}, d_3]$ are edges of $\mathcal{LW}(g)$, hence are in $\mathcal{C}(G(g))$. The path representing $\sigma$ in $G(g)$ thus exists and alternates between colored and black edges. Analyzing overlapping subpaths to verifies smoothness. \qedhere
\end{proof}

\begin{proof}[Proof of Proposition \ref{P:BC}] We show that the path $\gamma$ corresponding to the realization $\lambda$ of a leaf of $\Lambda_{\phi}$ is a locally smoothly embedded line in $G(g)$ traversing each edge of $G(g)$ infinitely many times as $\bold{R}\to \infty$ and as $\bold{R}\to -\infty$.  By Lemma \ref{L:LeafTurns}, for any colored edge [$d_i, d_j$] in $G(g)$, $\lambda$ must contain either $\overline{e_i} e_j$ or $\overline{e_2} e_1$ as a subpath.  Fully irreducible outer automorphism lamination leaf birecurrency implies $\gamma$ must traverse the subpath $\overline{e_i} e_j$ or $\overline{e_j} e_i$ infinitely many times as $\bold{R}\to \infty$ and as $\bold{R}\to -\infty$.  By Lemma \ref{L:SmoothPathsforLeaves}, this concludes the proof for a colored edge. Consider a black edge $[d_l, \overline{d_l}]=[e_l]$.  Each vertex is shared with a colored edge.  Let [$d_l, \overline{d_m}$] be such an edge.  As shown above, $\overline{e_l} e_m$ or $\overline{e_m} e_l$ occur in a realization $\lambda$ infinitely many times as $\bold{R}\to \infty$ and as $\bold{R}\to -\infty$.  So $\lambda$ traverses $e_l$ infinitely many times as $\bold{R}\to \infty$ and as $\bold{R}\to -\infty$. Thus $\gamma$ traverses $[e_l]$ infinitely many times as $\bold{R}\to \infty$ and as $\bold{R}\to -\infty$. \qedhere
\end{proof}

\section{Admissible map properties}{\label{Ch:AMProperties}}

We prove that the ideal decomposition of a $(r;(\frac{3}{2}-r))$ potential representative satisfies ``Admissible Map Properties'' listed in Proposition \ref{P:am}. In Section \ref{Ch:Peels} we use the properties to show there are only two possible (fold/peel) relationship types between adjacent ltt structures in an ideal decomposition. Using this, in Section \ref{Ch:AMDiagrams}, we define the ``ideal decomposition diagram'' for $\mathcal{G} \in \mathcal{PI}_{(r;(\frac{3}{2}-r))}$.

The statement of Proposition \ref{P:am} comes at the start of this section, while its proof comes after a sequence of technical lemmas used in the proof.

\emph{\textbf{$g:\Gamma \to \Gamma$ will represent $\phi\in Out(F_r)$, have $(r;(\frac{3}{2}-r))$ potential, and be ideally decomposed as: $\Gamma = \Gamma_0 \xrightarrow{g_1} \Gamma_1 \xrightarrow{g_2} \cdots \xrightarrow{g_{n-1}}\Gamma_{n-1} \xrightarrow{g_n} \Gamma_n = \Gamma$. We use the standard \ref{N:IdealDecompositions} notation.}} \newline

\begin{prop}{\label{P:am}} $g$ satisfies each of the following.
\begin{description}
\item[AM Property I:] Each $G_j$ is birecurrent. \\[-5mm]
\item[AM Property II:] For each $G_j$, the illegal turn $T_j$ for the generator $g_{j+1}$ exiting $G_j$ contains the unachieved direction $d^u_j$ for the generator $g_j$ entering $G_j$, i.e. either $d^u_j=d^{pa}_j$ or $d^u_j=d^{pu}_j$. \\[-5mm]
\item[AM Property III:] In each $G_j$, the vertex labeled $d^u_j$ and edge $[t^R_j]=[d^u_j, \overline{d^a_j}]$ are both red. \\[-5mm]
\item[AM Property IV:]  If $[d_{(j,i)}, d_{(j,l)}]$ is in $C(G_j)$, then $D^Cg_{m,j+1}$([$d_{(j,i)}, d_{(j,l)}$]) is a purple edge in $G_m$, for each $m \neq j$. \\[-5mm]
\item[AM Property V:] For each $j$, $[t^R_j]=[d^u_j, \overline{d^a_j}]$ is the unique edge containing $d^u_j$. \\[-5mm]
\item[AM Property VI:] Each $g_j$ is defined by $g_j:e^{pu}_{j-1} \mapsto e^a_j e^u_j$ (where $D_0(e^u_j)=d^u_j$, $D_0(\overline{e^a_j})=\overline{d^a_j}$, $e^u_j=e_{j,m}$, and $e^{pu}_{j-1}=e_{j-1,m}$). \\[-5mm]
\item[AM Property VII:] $Dg_{l,j+1}$ induces an isomorphism from $SW(f_j)$ onto $SW(f_l)$ for all $j \neq l$. \\[-5mm]
\item[AM Property VIII:] For each $1 \leq j \leq r$:
~\\
\vspace{-\baselineskip}
\begin{itemize}
\item[(a)] there exists a $k$ such that either $e^u_k=E_{k,j}$ or $e^u_k= \overline{E_{k,j}}$ and \\[-5mm]
\item[(b)] there exists a $k$ such that either $e^a_k=E_{k,j}$ or $e^a_k= \overline{E_{k,j}}$.
\end{itemize}
\end{description}
\end{prop}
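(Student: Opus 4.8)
The plan is to prove the eight AM properties in the order in which they become available, exploiting that the Stallings fold decomposition of an ideally decomposed representative consists entirely of proper full folds of roses, so that each $g_k$ has the explicit form $g_k: e^{pu}_{k-1} \mapsto e^a_k e^u_k$ and is the identity on all other edges. First I would establish the properties that are essentially immediate from the structure of a proper full fold. AM Property VI is just a restatement (in the notation of \ref{N:IdealDecompositions}) of part (II)(b) of Proposition \ref{P:IdealDecomposition}, so nothing is needed beyond unwinding definitions. AM Property I (each $G_j$ is birecurrent) follows from the Birecurrency Condition (Proposition \ref{P:BC}) once one observes that, by Lemma \ref{L:pNpFreePreserved}, each $f_k = g_k \circ \cdots \circ g_1 \circ g_n \circ \cdots \circ g_{k+1}$ is again a pNp-free tt representative of $\phi^R$ (a cyclic reindexing of the composition), and $G_j = G(f_j)$ is its ltt structure; then Proposition \ref{P:BC} applies verbatim.

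Next I would handle the ``local'' structural properties II, III, V around the unachieved direction. Since $g_{j+1}$ is a proper full fold folding $e^{pu}_j$ over $e^{pa}_j$ (equivalently identifying the initial direction $d^{pu}_j$ with $d^{pa}_j$), its unique illegal turn $T_j$ is exactly $\{d^{pu}_j, d^{pa}_j\}$. AM Property II then amounts to showing that the direction $d^u_j$ which is \emph{not} in the image of $Dg_j$ (the unachieved direction created by the previous fold) is one of $d^{pu}_j, d^{pa}_j$. This I would argue by a counting/gates argument: $f_j$ has exactly $2r-1$ fixed directions and one illegal turn (one gate has two elements, all others singletons, as in the proof of Proposition \ref{P:IdealDecomposition}), the single non-surjected direction of $Dg_j$ is $d^u_j$, and composing the folds forces the illegal turn of $f_{j+1}$ (which descends from $T_j$) to involve $d^u_j$ — otherwise $f$ would lose a direction from its image or fail to have $2r-1$ gates, contradicting Lemma \ref{L:GateCollapsing}. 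For AM Property III, the vertex $d^u_j$ is red precisely when $d^u_j \notin Per(f_j)$; since $d^u_j$ is not in $Im(Dg_j)$ and the periodic directions are permuted by $Df_j$, $d^u_j$ cannot be periodic, so it and any edge it supports lie in $\mathcal{LW}(f_j) - \mathcal{SW}(f_j)$, i.e. are red — this gives both the vertex and the edge $[t^R_j] = [d^u_j, \overline{d^a_j}]$. AM Property V (this red edge is the unique edge at $d^u_j$) follows because $\mathcal{IW}(\phi) \in \mathcal{PI}_{(r;(\frac{3}{2}-r))}$ is a $(2r-1)$-vertex graph equal to $\mathcal{SW}(f_j)$, so $\mathcal{LW}(f_j)$ has exactly $2r-1$ periodic vertices plus the one red vertex $d^u_j$; a turn at $d^u_j$ is taken by some $g^k(E)$ only through the single fold relation, pinning down its unique partner as $\overline{d^a_j}$.

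Then come the ``transport'' properties IV and VII, which say the induced maps of ltt structures move colored/stable subgraphs correctly. Here I would first verify the induced map $Dg^T_{k,l}$ of ltt structures actually exists and restricts to $\mathcal{C}(G_{l-1})$ and $\mathcal{SW}(f_{l-1})$ as claimed in \ref{N:IdealDecompositions}(9),(13) — this is where Lemma \ref{L:LeafTurns} does the real work: an edge $[d_1,d_2]$ of $\mathcal{LW}(f_{l-1}) = \mathcal{C}(G_{l-1})$ records a turn taken by a leaf of $\Lambda_\phi$ realized in $\Gamma_{l-1}$, and applying $g_{k,l}$ carries that leaf realization to the leaf realization in $\Gamma_k$, whose turns are exactly the edges of $\mathcal{LW}(f_k)$; one checks no cancellation destroys the turn (because $g_{k,l}$ is a composition of folds, hence has no backtracking on leaves), so the turn survives and lands on a colored edge — and a \emph{periodic} turn lands on a periodic turn since $Dg_{k,l}$ semi-conjugates the direction maps. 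AM Property VII is then: $Dg_{l,j+1}$ is a bijection on periodic directions (Lemma \ref{L:GateCollapsing} applied around the cycle gives injectivity; $2r-1 = 2r-1$ gives surjectivity) and carries edges of $\mathcal{SW}(f_j)$ onto edges of $\mathcal{SW}(f_l)$ by the previous sentence, and the inverse is realized by $Dg_{j,l+1}$, so it is a graph isomorphism. AM Property IV is the same transport statement phrased for a single colored edge, with the extra observation that the image is \emph{purple}: the only red edge of $G_j$ is $[t^R_j]$ by Property V, and $[t^R_j]$ is exactly the edge that gets ``achieved'' (its red vertex $d^u_j$ becomes periodic after the fold $g_j$), so for $m \neq j$ its image under $Dg^C_{m,j+1}$ records a periodic turn, hence is purple; any other colored edge of $G_j$ is already purple and stays purple by Property VII.

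Finally AM Property VIII: every edge of $\Gamma$ serves, up to orientation, as an unachieved edge $e^u_k$ at some stage and as a twice-achieved edge $e^a_k$ at some stage. I would prove this by a connectivity argument on $\mathcal{IW}(\phi)$. If some $E_{*,j}$ were never (up to $\pm 1$) an $e^u_k$, then the direction $d_{*,j}$ (or its reverse) is fixed by \emph{every} $Dg_k$, so the corresponding vertex of each $\mathcal{SW}(f_k)$ is never the endpoint $d^u_k$ of the transient red edge — tracing through the cyclic composition $g_n \circ \cdots \circ g_1$ one finds the corresponding vertex of $\mathcal{IW}(\phi)$ is isolated or the graph disconnects, contradicting connectedness of $\mathcal{IW}(\phi) \in \mathcal{PI}_{(r;(\frac{3}{2}-r))}$; symmetrically for $e^a_k$, since a vertex that is never twice-achieved never receives a ``new'' edge and one similarly contradicts connectivity or the $(2r-1)$-vertex count.

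The main obstacle I expect is Property IV / the well-definedness of the induced map of ltt structures together with the color bookkeeping — showing that colored edges go to colored edges requires carefully combining Lemma \ref{L:LeafTurns}, Lemma \ref{L:SmoothPathsforLeaves}, and the no-cancellation behavior of folds, and then separately tracking which single edge is red at each stage and verifying it is precisely the one whose image becomes purple; the proofs of II, III, V, VI are comparatively routine consequences of the proper-full-fold normal form, and VII, VIII reduce to finite counting once IV is in place.
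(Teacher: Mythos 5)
Most of your outline tracks the paper's actual proof: AM I via the cyclic-permutation lemma (Lemma \ref{L:pNpFreePreserved}/Lemma \ref{L:fk}) plus Proposition \ref{P:BC}; AM II by the $2r-1$-gates counting argument (Lemma \ref{L:IllegalTurn}); AM III/V from $d^u_j \notin \mathrm{Im}(Dg_j)$ and the fact that $e^u_j$ only ever appears adjacent to $e^a_j$; AM VI essentially definitional. Your lamination-leaf route to the transport statements (using Lemma \ref{L:LeafTurns}, legality of leaves, and the semiconjugacy $f_l \circ g_{l,j+1}=g_{l,j+1}\circ f_j$) is a genuinely different and workable alternative to the paper's purely combinatorial induction (Lemma \ref{L:PreLemma} and Lemma \ref{L:EdgeImage}), and your purple/red bookkeeping for AM IV and the bijection-on-fixed-directions argument for AM VII are sound. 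However, your proof of AM Property VIII has a genuine gap. You correctly identify the key combinatorial fact (if $E_{*,j}$ is never $e^u_k$ up to inversion, then every $g_k$ sends $E_{k-1,j}$ to $E_{k,j}$, so $g(E_{0,j})=E_{0,j}$), but the contradiction you then draw — that the corresponding vertex of $\mathcal{IW}(\phi)$ is isolated or that $\mathcal{IW}(\phi)$ disconnects — is unjustified and is not true at the level of the available combinatorics: a $g$-fixed petal can still occur as a twice-achieved edge $e^a_k$, so other edges' images may cross it and create turns at both of its directions, and no isolation or disconnection follows. The correct contradiction (and the one the paper uses in Lemma \ref{L:irred}) is with irreducibility: a fixed petal of the rose gives an invariant proper free factor, contradicting that $\phi$ is fully irreducible; and for part (b), if $E_{*,j}$ is never $e^a_k$ then no other edge's image ever crosses $E_{0,j}^{\pm 1}$, so the transition matrix of $g$ is reducible, again contradicting full irreducibility. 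Your ``$(2r-1)$-vertex count'' fallback for (b) is similarly unsubstantiated.

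A secondary, smaller omission: for AM III you assert that the edge $[t^R_j]=[d^u_j,\overline{d^a_j}]$ is red, but you never establish that this edge \emph{exists} in $\mathcal{LW}(f_j)$, i.e.\ that the turn $\{\overline{d^a_j},d^u_j\}$ is actually taken by some iterate image. The paper gets this from Lemma \ref{L:PreLemma}: no partial composition $g_{k,l}$ cancels on edges, and $g_{k,l}(E_{l-1,i})$ always contains $E_{k,i}$, so applying the fold $g_j$ to a path containing $e^{pu}_{j-1}$ produces the subpath $e^a_j e^u_j$ inside $f_j(e^u_j)$. Your sketch gestures at this through the ``single fold relation'' when proving uniqueness (AM V), but the existence step needs the containment-plus-no-cancellation lemma stated explicitly, since both AM III and your AM IV argument for the red edge rely on it.
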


\vskip8pt

\noindent The proof of Proposition \ref{P:am} will come at the end of this subsection.

\vskip8pt

\begin{df}
An edge path $\gamma=e_1 \dots e_k$ in $\Gamma$ has \emph{cancellation} if $\overline{e_i} =e_{i+1}$ for some $1 \leq i \leq k-1$. We say $g$ has \emph{no cancellation on edges} if for no $l>0$ and edge $e \in \mathcal{E}(\Gamma)$ does $g^l(e)$ have cancellation.
\end{df}

\begin{lem}{\label{L:PreLemma}} For this lemma we index the generators in the decomposition of all powers $g^p$ of $g$ so that $g^p=g_{pn} \circ g_{pn-1} \circ \dots \circ g_{(p-1)n} \circ \dots \circ g_{(p-2)n} \circ \dots \circ g_{n+1} \circ g_n \circ \dots \circ g_1$ ($g_{mn+i}=g_i$, but we want to use the indices to keep track of a generator's place in the decomposition of $g^p$).  With this notation, $g_{k,l}$ will mean $g_k \circ \dots \circ g_l$.  Then: \newline
\indent 1. for each $e \in \mathcal{E}(\Gamma_{l-1}$), no $g_{k,l}(e)$ has cancellation; \newline
\indent 2. for each $0 \leq l \leq k$ and $E_{l-1,i} \in \mathcal{E}^+(\Gamma_{l-1})$, the edge $E_{k,i}$ is in the path $g_{k,l} (E_{l-1,i})$; and \newline
\indent 3. if $e^u_k=e_{k,j}$, then the turn $\{\overline{d^a_k}, d^u_k \}$ is in the edge path $g_{k,l}(e_{l-1,j})$, for all $0 \leq l \leq k$.
\end{lem}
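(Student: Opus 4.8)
\medskip

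The plan is to prove the three claims simultaneously by induction on $k - l$ (the number of folds applied), relying throughout on the very restrictive structure established in Proposition~\ref{P:IdealDecomposition}: each $g_k$ is a proper full fold of roses, so $g_k: e^{pu}_{k-1} \mapsto e^a_k e^u_k$ and fixes every other (unoriented) edge, merely relabeling indices trivially. The base case $k = l$ is trivial: $g_{l,l}$ is (essentially) the identity relabeling, so there is no cancellation, $E_{l-1,i}$ literally equals $E_{l,i}$, and there is nothing to check for claim~3 unless $e^u_l = e_{l,j}$, in which case the turn $\{\overline{d^a_l}, d^u_l\}$ appears in $g_l(e_{l-1,j}) = e^a_l e^u_l$ by the definition of the fold. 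For the inductive step I would write $g_{k,l} = g_k \circ g_{k-1,l}$ and analyze what the single extra fold $g_k$ does to the path $g_{k-1,l}(e)$, which by the inductive hypothesis is already cancellation-free and contains the relevant edges/turns.

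\medskip

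For claim~1 (no cancellation): a fold $g_k$ replaces each occurrence of $e^{pu}_{k-1}$ by $e^a_k e^u_k$ and each occurrence of $\overline{e^{pu}_{k-1}}$ by $\overline{e^u_k}\,\overline{e^a_k}$, leaving all other edges alone. New cancellation in $g_k(\text{path})$ could therefore only arise (i) internally, within the image of a single letter — impossible, since $e^a_k \ne (e^u_k)^{\pm 1}$ by the fold hypothesis — or (ii) at the seam between the images of two consecutive letters $e_i e_{i+1}$ of the original cancellation-free path. A short case check handles (ii): the only way a cancellation is created is if the letter adjacent to an occurrence of $e^{pu}_{k-1}$ (or its inverse) was already $\overline{e^a_k}$ or $e^a_k$ in a way forming a backtrack after substitution, and one rules this out using that $g$ itself is a train track with no cancellation on edges (this is essentially the pNp-free/train-track content, and I would cite Lemma~\ref{L:pNpFreePreserved} to know each intermediate $g_{k,l}$ is part of a train-track representative), together with the fact that the turn being folded is the \emph{unique} illegal turn at each stage. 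This is the step I expect to be the main obstacle: making the seam analysis airtight requires keeping careful track of which direction is $d^u$ versus $d^a$ and invoking AM-type facts, but one must be cautious not to invoke Proposition~\ref{P:am} circularly — so the argument has to lean only on the Stallings-fold structure and the train-track property, not on later AM Properties.

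\medskip

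For claim~2 (persistence of $E_{k,i}$): induct again. By the inductive hypothesis $E_{k-1,i}$ occurs in $g_{k-1,l}(E_{l-1,i})$. Applying $g_k$: if $E_{k-1,i} \ne e^{pu}_{k-1}{}^{\pm 1}$ then $g_k(E_{k-1,i}) = E_{k,i}$, so $E_{k,i}$ survives in the image path; if $E_{k-1,i} = e^{pu}_{k-1}$ then $g_k$ sends it to $e^a_k e^u_k = E_{k,i}\,(\text{something})$ or $(\text{something})\,E_{k,i}$ — in either sub-case the edge $E_{k,i}$ (being $e^a_k$ or $e^u_k$, which carry the index $i$ under the relabeling convention of Proposition~\ref{P:IdealDecomposition}(II)(b)) appears. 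Claim~3 is then a refinement of claim~2 in the special case $e^u_k = e_{k,j}$: by claim~2 applied to index $j$, the edge $e_{k-1,j} = e^{pu}_{k-1}$ appears in $g_{k-1,l}(e_{l-1,j})$; applying $g_k$ replaces that occurrence by $e^a_k e^u_k$, which by definition contains the turn $\{\overline{d^a_k}, d^u_k\}$; and by claim~1 no later... actually no further folds are applied since we are already at stage $k$, so the turn is present in $g_{k,l}(e_{l-1,j})$ as required. I would close by remarking that all three statements for the un-powered $g$ (i.e.\ $0 \le l \le k \le n$) follow as the special case $p = 1$ of the powered indexing set up at the start of the lemma.
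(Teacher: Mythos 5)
Your overall shape — induction, seam analysis of a single proper full fold, and ultimately leaning on the train-track property of $g$ — is the right one, and your treatments of claims 2 and 3 essentially match the paper's. But claim 1, which you correctly flag as the hard step, is where your plan has a real gap, and the paper handles it with a different organizational scheme that you would need to adopt.

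The gap: the issue is to show that the cancellation-free path $g_{k-1,l}(e)$ does \emph{not} contain the unique illegal turn $\{d^{pu}_{k-1}, d^{pa}_{k-1}\}$ of $g_k$ at any of its seams. You say you would rule this out ``using that $g$ itself is a train track with no cancellation on edges... together with the fact that the turn being folded is the unique illegal turn,'' and would cite Lemma~\ref{L:pNpFreePreserved}. But Lemma~\ref{L:pNpFreePreserved} shows that the full cyclic composites $f_i$ are train tracks; $g_{k,l}$ itself is not a self-map and is not a train-track representative, so it is not covered by that lemma as stated. What the argument actually needs is the persistence-of-cancellation observation: if $g_{k,l}(e)$ already contains a backtrack $\alpha\bar\alpha$, then applying further generators (which send edges to nondegenerate paths) can never undo it, so some $g^p(e_{0,j})$ would also have a backtrack, contradicting that $g^p$ is a tt. To get from the edge $e \in \mathcal{E}(\Gamma_{l-1})$ where the hypothetical cancellation appears back to an edge $e_{0,j}$ of $\Gamma_0$, the paper uses claim~2 applied at the pair $(l-1,1)$: the edge $e_{l-1,j}$ occurs in $g_{l-1,1}(e_{0,j})$. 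But $(l-1,1)$ has measure $l-2$, which is generally \emph{not} less than $k-l$; so your simultaneous induction on $k-l$ does not have the needed inductive hypothesis available, and the framework breaks. The paper avoids this by a different scheme: it takes $s$ minimal such that some $g_{s,t}(\cdot)$ has cancellation, proves claim~2 for all $k<s$ (by induction on $k-l$, which is legitimate there because the needed no-cancellation hypothesis is exactly ``$k<s$''), and then uses this to propagate the hypothetical cancellation at level $s$ into $g^p(e_{0,j})$ and derive the contradiction. Only afterward, with claim~1 fully established, does claim~3 fall out by the argument you sketch.

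Two smaller points. First, your case analysis of claim~2 slightly misstates the labeling: when $E_{k-1,i}=e^{pu}_{k-1}$ you have $i=j_k$ by the index conventions of Proposition~\ref{P:IdealDecomposition}(II)(b), so $E_{k,i}=e^u_k$ specifically (not ``$e^a_k$ or $e^u_k$''), and it sits at the tail of $e^a_k e^u_k$. Second, your closing remark about the $p=1$ case is not quite the role of the powered indexing; the lemma is \emph{stated} for all powers because later uses (e.g.\ the proofs of Corollary~\ref{C:UnachievedDirection} and Lemma~\ref{L:EdgeImage}) require $g_{k,l}$ with $k$ exceeding $n$, so the result is genuinely about all $p$, not a special case.
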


\begin{proof} Let $s$ be minimal so that some $g_{s,t} (e_{t-1,j})$ has cancellation. Before continuing with our proof of (1), we first proceed by induction on $k-l$ to show that (2) holds for $k<s$.  For the base case observe that $g_{l+1}(e_{l,j})=e_{l+1,j}$ for all $e_{l+1,j} \neq (e^{pu}_l)^{\pm 1}$. Thus, if $e_{l,j} \neq e^{pu}_l$ and $e_{l,j} \neq \overline{e^{pu}_l}$ then $g_{l+1}(e_{l,j})$ is precisely the path $e_{l+1,j}$ and so we are only left for the base case to consider when $e_{l,j} = (e^{pu}_l)^{\pm 1}$.  If $e_{l,j} = e^{pu}_l$, then $g_{l+1}(e_{l,j})=e^a_{l+1} e_{l+1,j}$ and so the edge path $g_{l+1}(e_{l,j})$ contains $e_{l+1,j}$, as desired.  If $e_{l,j} =\overline{e^{pu}_l}$, then $g_{l+1}(e_{l,j})=e_{l+1,j} \overline{e^a_{l+1}}$ and so the edge path $g_{l+1}(e_{l,j})$ also contains $e_{l+1,j}$ in this case.  Having considered all possibilities, the base case is proved.

For the inductive step, we assume $g_{k-1,l+1} (e_{l,j})$ contains $e_{k-1,j}$ and show $e_{k,j}$ is in the path $g_{k,l+1}(e_{l,j})$. Let $g_{k-1,l+1}(e_{l,j})= e_{i_1}\dots e_{i_{q-1}} e_{k-1,j} e_{i_{q+1}}\dots e_{i_r}$ for some edges $e_i \in \mathcal{E}_{k-1}$. As in the base case, for all $e_{k-1,j} \neq (e^u_k)^{\pm 1}$, $g_k(e_{k-1,j})$ is precisely the path $e_{k,j}$. Thus (since $g_k$ is an automorphism and since there is no cancellation in $g_ {j_1,j_2}(e_{j_1,j_2})$ for $1 \leq j_1 \leq j_2 \leq k$), $g_{k,l+1} (e_{l,j})=\gamma_1 \dots \gamma_{q-1}  (e_{k,j}) \gamma_{q+1} \dots \gamma_m$ where each $\gamma_{i_j}= g_l({e_{i_j}})$ and where no $\{\overline{\gamma_i}, \gamma_{i+1} \}$, $\{\overline{e_{k,j}}, \gamma_{q+1} \}$, or $\{\overline{\gamma_{q-1}}, e_{k,j} \}$ is an illegal turn.
So each $e_{k,j}$ is in $g_{k,l+1} (e_{l,j})$. We are only left to consider for the inductive step the cases $e_{k-1,j}= e^{pu}_k$ and $e_{k-1,j} =\overline {e^{pu}_k}$.

If $e_{k-1,j} = e^{pu}_k$, then $g_k(e_{k-1,j})=e^a_k e_{k,j}$, and so $g_{k,l+1}(e_{l,j}) = \gamma_1 \dots \gamma_{q-1} e^a_k e_{k,j} \gamma_{q+1} \dots \gamma_m$ (where no $\{\overline{\gamma_i}, \gamma_{i+1} \}$, $\{\overline{e_{k,j}}, \gamma_{q+1} \}$, or $\{\overline{\gamma_{q-1}}, e^a_k \}$ is an illegal turn), which contains $e_{k,j}$, as desired. If instead $e_{k-1,j} =\overline {e^{pu}_k}$, then $g_k(e_{k-1,j})=e_{k,j} \overline{e^a_k}$ and so $g_{k,l+1}(e_{l,j})=\gamma_1 \dots \gamma_{q-1} e_{k,j} \overline{e^a_k} \gamma_{q+1} \dots\gamma_m$, which also contains $e_{k,j}$. Having considered all possibilities, the inductive step is now also proven and the proof is complete for (2) in the case of $k<s$.

We finish the proof of (1). $s$ is still minimal. So $g_{s,t}(e_{t-1,j})$ has cancellation for some $e_{t-1,j} \in \mathcal{E}_j$. Suppose $g_{s,t}(e_{t-1,j})$ has cancellation. For $1 \leq j \leq m$, let $\alpha_j \in \mathcal{E}_{s-1}$ be such that $g_{s-1,t}(e_{t-1,j})= \alpha_1 \cdots \alpha_m$. By $s$'s minimality, either $g_s(\alpha_i)$ has cancellation for some $1 \leq i \leq m$ or $Dg_s(\overline{\alpha_i})=Dg_s(\alpha_{i+1})$ for some $1 \leq i < m$. Since each $g_s$ is a generator, no $g_s(\alpha_i)$ has cancellation. So, for some $i$, $Dg_s(\overline{\alpha_i})= Dg_s(\alpha_{i+1})$. As we have proved (1) for all $k<s$, we know $g_{t-1,1}(e_{0,j})$ contains $e_{t-1,j}$. So $g_{s,1}(e_{0,j})= g_{s,t}(g_{t-1,1}(e_{0,j}))$ contains cancellation, implying $g^p(e_{0,j})= g_{pn,s+1}(g_{s,1}(e_{0,j}))= g_{s,t}(\dots e_{t-1,j} \dots)$ for some $p$ (with $pn>s+1$) contains cancellation, contradicting that $g$ is a train track.

We now prove (3). Let $e^u_k=e_{k,l}$. By (2) we know that the edge path $g_{k-1,l}(e_{l-1,j})$ contains $e_{k-1,j}$.  Let $e_1, \dots e_m \in \mathcal{E}_{k-1}$ be such that $g_{k-1,l}(e_{l-1,j})=e_1 \dots e_{q-1} e_{k-1,j} e_{q+1} \dots e_m$. Then $g_{k,l}(e_{l-1,j})=\gamma_1 \dots \gamma_{q-1} e^a_k e^u_k \gamma_{q+1} \dots \gamma_r$ where $\gamma_j=g_k(e_j)$ for all $j$. Thus $g_{k,l}(e^{pu}_{k-1})$ contains $\{ \overline{d^a_k}, d^u_k \}$, as desired.  \qedhere
\end{proof}

\vskip5pt

\begin{lem}{\label{L:fk}} (Properties of $f_k= g_k \circ g_{k-1} \circ \cdots \circ g_{k+2} \circ g_{k+1}\colon\Gamma_k\to\Gamma_k$)
~\\
\vspace{-\baselineskip}
\begin{description}
\item[a.] Each $f_k$ represents the same $\phi$.  In particular, if $g$ has $(r;(\frac{3}{2}-r))$ potential, then so does each $f_k$. \\[-5.7mm]
\item[b.] Each $f_k$ is rotationless.  In particular, all periodic directions are fixed. \\[-5.7mm]
\item[c.] Each $f_k$ has 2r-1 gates (and thus periodic directions). \\[-5.7mm]
\item[d.] For each $k$, $d^u_k \notin \mathcal{IM}(Df_k)$. Thus, $d^u_k$ is the unique nonperiodic (in fact nonfixed) direction for $Df_k$. \\[-5mm]
\item[e.] If $\Gamma = \Gamma_0 \xrightarrow{g_1} \Gamma_1 \xrightarrow{g_2} \cdots \xrightarrow{g_{n-1}}\Gamma_{n-1} \xrightarrow{g_n} \Gamma_n = \Gamma$ is an ideal decomposition of $g$, then $\Gamma_k \xrightarrow{g_{k+1}} \Gamma_{k+1} \xrightarrow{g_{k+2}} \cdots \xrightarrow{g_{k-1}} \Gamma_{k-1} \xrightarrow{g_k} \Gamma_k$ is an ideal decomposition of $f_k$.
\end{description}
\end{lem}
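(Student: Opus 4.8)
The plan is to derive (a)--(e) from Lemma~\ref{L:pNpFreePreserved}, the characterizations of rotationlessness recalled just before it, and the explicit data of an ideal decomposition (Notation~\ref{N:IdealDecompositions}). Since an ideal decomposition of $g$ is in particular a decomposition of the pNp-free tt representative $g$ into homotopy equivalences of marked graphs with no valence-one vertices, Lemma~\ref{L:pNpFreePreserved} applies verbatim (with the index set read cyclically as $\mathbf{Z}/n\mathbf{Z}$), and $f_k = g_k\circ g_{k-1}\circ\cdots\circ g_{k+1}$ is exactly the map $h$ of that lemma; this gives the first sentence of (a), namely that $f_k$ is a pNp-free tt representative of the same $\phi$. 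For (b): being ideally decomposed, $g$ is rotationless, and a representative is rotationless if and only if its outer automorphism is, so $\phi$, hence $f_k$, is rotationless. Moreover each $\Gamma_k$ is a rose, so $f_k$ pNp-free gives $\mathcal{SW}(f_k)\cong\mathcal{IW}(\phi)$, whose vertices are precisely the periodic directions of $f_k$; by the recalled equivalence (for $\phi\in\mathcal{AFI}_r$ with $\mathcal{IW}(\phi)\in\mathcal{PI}_{(r;(\frac{3}{2}-r))}$, $\phi$ is rotationless iff those vertices are $\phi$-fixed) all periodic directions of $f_k$ are fixed.

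For (c): $\mathcal{IW}(\phi)\in\mathcal{PI}_{(r;(\frac{3}{2}-r))}$ has $2r-1$ vertices, so $f_k$ has exactly $2r-1$ periodic (by (b), fixed) directions among the $2r$ directions of the rank-$r$ rose $\Gamma_k$, leaving a single non-periodic direction $d$. Iterating $Df_k$ carries $d$ into the finite set of periodic directions, each of which is fixed, so $Df_k^m(d)=d^{\ast}$ for some fixed direction $d^{\ast}$ and large $m$, whence $d$ and $d^{\ast}$ share a gate; since two distinct fixed directions never share a gate, and no direction other than $d$ and $d^{\ast}$ can lie in their common gate, the gates of $f_k$ are the $2r-2$ singletons formed by the remaining fixed directions together with $\{d,d^{\ast}\}$ --- that is $2r-1$ gates, with exactly one periodic direction per gate. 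For (d): by construction of an ideal decomposition (Notation~\ref{N:IdealDecompositions}, item 5), $d^u_k=D_0(e^u_k)\notin Im(Dg_k)$; writing $f_k=g_k\circ F$ with $F=g_{k-1}\circ\cdots\circ g_{k+1}$ gives $Df_k=Dg_k\circ DF$, hence $Im(Df_k)\subseteq Im(Dg_k)$ and $d^u_k\notin Im(Df_k)$. A direction outside $Im(Df_k)$ cannot be periodic (a periodic direction is the $Df_k$-image of an iterate of itself), so $d^u_k$ is non-periodic, and by (c) it is therefore the unique non-periodic --- hence, by (b), the unique non-fixed --- direction of $Df_k$. Finally, once (e) tells us $f_k$ is ideally decomposed, the ``in particular'' clause of (a) is immediate, since ageometricity, full irreducibility, and $\mathcal{IW}(\phi)\in\mathcal{PI}_{(r;(\frac{3}{2}-r))}$ are properties of $\phi$ alone and $\phi$ is unchanged.

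For (e): by (a) and Lemma~\ref{L:pNpFreePreserved}, $f_k$ is a pNp-free tt representative of $\phi$; it is on the rose $\Gamma_k$, and by (b) it is rotationless. The sequence $\Gamma_k\xrightarrow{g_{k+1}}\Gamma_{k+1}\xrightarrow{g_{k+2}}\cdots\xrightarrow{g_k}\Gamma_k$ is literally the ideal decomposition of $g$ with its index set read cyclically as $\mathbf{Z}/n\mathbf{Z}$, so condition (I) of Proposition~\ref{P:IdealDecomposition} holds automatically, each constituent $g_j$ remains a proper full fold of roses given by the formula in (II)(b), and re-indexing the edge set of $\Gamma_k$ accordingly produces the data of (II)(a)--(b). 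For (II)(c) one must verify that the one direction of $\Gamma_k$ that $Df_k$ is permitted not to fix is exactly the one with the excluded index $j_k$, i.e. that $d^u_k = D_0(e^u_k)=D_0(e_{k,j_k})=d_{k,j_k}$; but this is precisely part (d), which identified $d^u_k$ as the unique non-fixed direction of $Df_k$.

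I expect the only genuinely fiddly step to be the index bookkeeping in (e): keeping the cyclic re-indexing of vertices, edges, and directions consistent and confirming that (II)(c) lines up with the index $j_k$. A secondary care point is that one should invoke the final paragraph of the proof of Proposition~\ref{P:IdealDecomposition} to know an ideal decomposition carries no leftover nontrivial homeomorphism, so that cyclically reordering it really does yield another sequence of proper full folds of roses. Parts (a)--(d) are each short once Lemma~\ref{L:pNpFreePreserved} and the rotationless characterizations are in hand.
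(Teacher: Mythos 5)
Your argument is correct and follows essentially the same route as the paper: (a) from Lemma~\ref{L:pNpFreePreserved}, (b) from the rotationless characterizations, (c) from $\mathcal{SW}(f_k)\cong\mathcal{IW}(\phi)$ plus the gate/periodic-direction count, (d) from $d^u_k\notin\mathcal{IM}(Dg_k)\supseteq\mathcal{IM}(Df_k)$, and (e) from cyclic re-indexing with (IIc) supplied by (d). The only difference is stylistic: you spell out the gate partition in (c) explicitly rather than citing the standard one-periodic-direction-per-gate count, and you correctly note that the ``in particular'' clause of (a) depends on (e), a dependency the paper leaves implicit.
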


\begin{proof} Lemma \ref{L:pNpFreePreserved} implies (a). Each $f_k$ is rotationless, as it represents a rotationless $\phi$. This gives (b). We prove (c). The number of gates is the number of periodic directions, which here (by (b)) is the number of fixed directions. $f_k$ is on the rose, so has a single local Stable Whitehead graph. Lemma \ref{L:pNpFreePreserved} implies $f_k$, as $g$, has no pNps. So $\mathcal{SW}(f_k) \cong \mathcal{IW}(\phi)$, which has 2r-1 vertices. So $f_k$ has 2r-1 periodic directions, thus gates. We prove (d). By (b) and (c), $Df_k$ has 2r-1 fixed directions. Since $d^u_k \notin \mathcal{IM}(Dg_k)$, it cannot be in $\mathcal{IM}(Df_k)$, so is the unique nonfixed direction. We prove (e). Ideal decomposition properties (I)-(IIb) hold for $f_k$'s decomposition, as they hold for $g$'s decomposition and the decompositions have the same $\Gamma_i$ and $g_i$ (renumbered). (IIc) holds for $f_k$'s decomposition by (d).  \qedhere
\end{proof}

\noindent \emph{\textbf{We add to the notation already established: $t^R_k=\{\overline{d^a_k},d^u_k\}$, $e^R_k=[t^R_k]$, and $T_k=\{d^{pa}_k, d^{pu}_k \}$.}}

\smallskip

\begin{lem}{\label{L:IllegalTurn}} The following hold for each $T_k=\{d^{pa}_k, d^{pu}_k \}$.
~\\
\vspace{-6mm}
\begin{description}
\item[a.] $T_k$ is an illegal turn for $g_{k+1}$ and, thus, also for $f_k$. \\[-5.7mm]
\item[b.] For each $k$, $T_k$ contains $d^{u}_{k}$. \\[-5mm]
\end{description}
\end{lem}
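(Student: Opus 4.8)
The plan is to deduce both parts directly from the explicit shape of the folds in an ideal decomposition (Notation~\ref{N:IdealDecompositions}, Proposition~\ref{P:IdealDecomposition}(IIb)) together with Lemma~\ref{L:fk}. For part (a), I would begin from the fact that $g_{k+1}$ is a proper full fold of the rose $\Gamma_k$: it folds the full edge $e^{pa}_k$ onto a proper initial segment of $e^{pu}_k$, so $g_{k+1}(e^{pu}_k)=e^a_{k+1}e^u_{k+1}$ (a reduced edge path, since $e^a_{k+1}\neq(e^u_{k+1})^{\pm1}$, equivalently $e^{pa}_k\neq(e^{pu}_k)^{\pm1}$) while $g_{k+1}(e^{pa}_k)=e^a_{k+1}$. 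Passing to initial directions gives $Dg_{k+1}(d^{pu}_k)=D_0(e^a_{k+1})=d^a_{k+1}$ and likewise $Dg_{k+1}(d^{pa}_k)=d^a_{k+1}$, while $d^{pu}_k\neq d^{pa}_k$ because distinct edges of a rose have distinct initial directions. Hence $T_k=\{d^{pa}_k,d^{pu}_k\}$ is a non-degenerate turn whose image under $Dg_{k+1}$ is degenerate, i.e. $T_k$ is illegal for $g_{k+1}$. Since $g_{k+1}$ is the first map applied in $f_k=g_k\circ\cdots\circ g_1\circ g_n\circ\cdots\circ g_{k+1}$, composing with the remaining direction maps yields $Df_k(d^{pu}_k)=Df_k(d^{pa}_k)$, so $T_k$ is illegal for $f_k$ as well.

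For part (b), I would use (a) together with Lemma~\ref{L:fk}(b),(d): $f_k$ is rotationless (so every periodic direction is fixed) and $d^u_k$ is its unique non-fixed direction. Assume for contradiction that neither $d^{pu}_k$ nor $d^{pa}_k$ equals $d^u_k$; then both are fixed by $Df_k$, so $Df_k(d^{pu}_k)=d^{pu}_k$ and $Df_k(d^{pa}_k)=d^{pa}_k$. But (a) gives $Df_k(d^{pu}_k)=Df_k(d^{pa}_k)$, forcing $d^{pu}_k=d^{pa}_k$, which contradicts $e^{pu}_k\neq e^{pa}_k$. So at least one of $d^{pu}_k,d^{pa}_k$ is non-fixed and therefore equals $d^u_k$, i.e. $d^u_k\in T_k$.

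I do not anticipate a genuine obstacle here; the points that need attention are keeping the direction-map bookkeeping straight across $\mathcal{D}_k$ and $\mathcal{D}_{k+1}$ when composing the $Dg_j$, and using the hypothesis $e^{pa}_k\neq(e^{pu}_k)^{\pm1}$ in the two places above — once to know $g_{k+1}(e^{pu}_k)$ is reduced and $g_{k+1}(e^{pa}_k)$ is a single edge, and once to know $d^{pu}_k\neq d^{pa}_k$ so that $T_k$ is an honest turn rather than a degenerate one. It is worth noting that (b) is exactly where the ``ideal'' nature of the decomposition enters: it asserts that the edge $e^u_k$ created by the fold $g_k$ is one of the two edges folded by $g_{k+1}$, and this is forced entirely by Lemma~\ref{L:fk}(d) (equivalently, by $f_k$ having exactly $2r-1$ gates), which is already available.
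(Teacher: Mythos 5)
Your argument for part (a) is the same calculation as the paper's: apply $Dg_{k+1}$ to the two elements of $T_k$, observe both land on $d^a_{k+1}$, and note the nondegeneracy of $T_k$ itself. For part (b), however, you take a genuinely different and shorter route. The paper argues inside $\Gamma_{k-1}$: assuming $d^u_k\notin T_k$, it pulls $d^{pu}_k$ and $d^{pa}_k$ back under $Dg_k$ to directions $d_{k-1,s},d_{k-1,t}$ (possible because neither has the second index $j_k$ of $e^{pu}_{k-1}$), shows these two share a gate for $f_{k-1}$, observes that the illegal turn $T_{k-1}$ already forces $d^{pu}_{k-1}$ to share a gate with something and is disjoint from the new pair, and concludes $f_{k-1}$ has at most $2r-2$ gates, contradicting Lemma~\ref{L:fk}(c). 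Your proof instead stays in $\Gamma_k$ and invokes Lemma~\ref{L:fk}(d) directly: since $d^u_k$ is the \emph{unique} nonfixed direction for $Df_k$, if neither $d^{pu}_k$ nor $d^{pa}_k$ were $d^u_k$ then both would be fixed, yet part (a) gives $Df_k(d^{pu}_k)=Df_k(d^{pa}_k)$, forcing $d^{pu}_k=d^{pa}_k$, a contradiction. The two proofs use equivalent information --- the paper's gate count is exactly what underlies Lemma~\ref{L:fk}(d) --- but yours is cleaner because it uses the ``uniqueness of the nonfixed direction'' as a black box rather than re-deriving the gate-counting consequence in $\Gamma_{k-1}$; the only thing the paper's version buys is that it never needs the rotationless/fixedness hypothesis per se (only a bound on the number of gates), which is a marginally weaker input but makes for a longer proof. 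Both are correct; yours is preferable here since Lemma~\ref{L:fk}(d) is already established.
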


\begin{proof} Recall that $T_k=\{d^{pa}_k, d^{pu}_k \}$.  Since $D^tg_{k+1}(\{d^{pa}_k, d^{pu}_k \})=\{Dg_{k+1}(d^{pa}_k), Dg_k(d^{pu}_k) \}= \{d^a_{k+1}, d^a_{k+1} \}$, \newline
\noindent  $D^tf_k(\{d^{pa}_k, d^{pu}_k\})=D^t(g_{k,k+2} \circ g_{k+1}) (\{d^{pa}_k, d^{pu}_k\})= D^t(g_{k,k+2}) (D^tg_{k+1} (\{d^{pa}_k, d^{pu}_k\}))= D^tg_{k,k+2} (\{d^a_{k+1}, d^a_{k+1}\})$ \newline
\noindent  $=\{D^tg_{k,k+2}(d^a_{k+1}), D^tg_{k,k+2}(d^a_{k+1})\}$, which is degenerate.  So $T_k$ is an illegal turn for $f_k$, proving (a).

For (b) suppose $g$ has $2r-1$ periodic directions and, for contradiction's sake, the illegal turn $T_k$ does not contain $d^u_k=d_{k,i}$. Let $d^u_{k+1}=d_{k+1,s}$ and $d^a_{k+1}=d_{k+1,t}$. Then $Dg_k(d_{k-1,s})=d_{k,s}$ and $Dg_k(d_{k-1,t})=d_{k,t}$, so $D^t(g_{k+1} \circ g_k)(\{d_{(k-1,s)}, d_{(k-1,t)} \})= \{D(g_{k+1} \circ g_k)(d_{(k-1,s)}), D(g_{k+1} \circ g_k)(d_{(k-1,t)})\}= \{ Dg_{k+1}(d_{k,s}= d^{pu}_k), Dg_{k+1}(d_{k,t}= d^{pa}_k)\} = \{ d^a_{k+1}, d^a_{k+1} \}$. So $d_{k-1,s}$ and $d_{k-1,t}$ share a gate. But $d_{k-1,i}$ already shares a gate with another element and we already established that $d_{k-1,i} \neq d_{k-1,s}$ and $d_{k-1,i} \neq d_{k-1,t}$. So $f_{k-1}$ has at most $2r-2$ gates. Since each $f_k$ has the same number of gates, this implies $g$ has at most $2r-2$ gates, giving a contradiction. (b) is proved.  \qedhere
\end{proof}

\begin{cor}{\label{C:UnachievedDirection}} \textbf{(of Lemma \ref{L:IllegalTurn})} For each $1 \leq k \leq n$,
~\\
\vspace{-\baselineskip}
\begin{description}
\item[a.] $t^R_k= \{\overline{d^a_k}, d^u_k\}$, must contain either $d^{pu}_k$ or $d^{pa}_k$ and \\[-5.2mm]
\item[b.] The vertex labeled $d^u_k$ in $G_k$ is red and $[t^R_k]= [\overline{d^a_k}, d^u_k]$ is a red edge in $G_k$.
\end{description}
\end{cor}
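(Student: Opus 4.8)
The plan is to derive both parts of Corollary~\ref{C:UnachievedDirection} from Lemma~\ref{L:IllegalTurn}(b), which already guarantees that the illegal turn $T_k = \{d^{pa}_k, d^{pu}_k\}$ contains $d^u_k$. For part (a), the point is simply that by definition $T_k$ and $t^R_k = \{\overline{d^a_k}, d^u_k\}$ are two turns at the vertex $v$ of the rose $\Gamma_k$, and by Lemma~\ref{L:IllegalTurn}(b) we have $d^u_k \in \{d^{pa}_k, d^{pu}_k\}$. So I would argue: since $d^u_k$ equals one of $d^{pa}_k$ or $d^{pu}_k$, and $t^R_k$ contains $d^u_k$ by definition of $t^R_k$, the turn $t^R_k$ contains at least one of $d^{pu}_k, d^{pa}_k$ (indeed precisely the one that equals $d^u_k$). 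This is essentially a restatement — the only thing to be careful about is checking that $\overline{d^a_k} \neq d^u_k$ so that $t^R_k$ is a genuine (nondegenerate) turn, which follows since $e^a_k \neq (e^u_k)^{\pm 1}$ (from property (IIb) of the ideal decomposition, where $e_{k,i_k} \neq (e_{k,j_k})^{\pm 1}$) and hence their initial directions are distinct.

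For part (b), I would invoke Lemma~\ref{L:fk}(d): $d^u_k \notin \mathcal{IM}(Df_k)$, so $d^u_k$ is the unique nonperiodic (nonfixed) direction for $f_k$. By Definition~\ref{D:whiteheadgraphs} and the construction of $\mathcal{CW}(f_k)$ in Subsection~2.4, a vertex of $G_k = G(f_k)$ is colored red exactly when its direction label is nonperiodic. Hence the vertex labeled $d^u_k$ is red. For the edge $[t^R_k] = [\overline{d^a_k}, d^u_k]$: I must first confirm this is actually an edge of $G_k$, i.e.\ that $\{\overline{d^a_k}, d^u_k\}$ is a turn taken by some $f_k^m(e)$ and hence gives an edge of $\mathcal{LW}(f_k) \subseteq \mathcal{C}(G_k)$. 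This is exactly where Lemma~\ref{L:PreLemma}(3) enters: with $e^u_k = e_{k,j}$, the turn $\{\overline{d^a_k}, d^u_k\}$ appears in the edge path $g_{k,l}(e_{l-1,j})$, and by iterating (taking $l$ so that the composite is a power of $f_k$) this turn is crossed by some $f_k^m(e)$, so $[t^R_k]$ is an edge of $\mathcal{C}(G_k)$. Since it is a colored edge incident to the red vertex $d^u_k$, and a colored edge is purple only if both its endpoints are periodic (purple) — which fails here because $d^u_k$ is nonperiodic — the edge $[t^R_k]$ is red.

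The main obstacle, such as it is, is bookkeeping rather than mathematics: one must be careful that the various index conventions line up so that Lemma~\ref{L:PreLemma}(3) can legitimately be applied to the loop decomposition of $f_k$ (using Lemma~\ref{L:fk}(e), which says $\Gamma_k \to \Gamma_{k+1} \to \cdots \to \Gamma_k$ is itself an ideal decomposition of $f_k$), and that "red vertex/red edge" in the sense of the coloring of $\mathcal{CW}$ coincides with "nonperiodic direction" in the sense of $f_k$. Once those identifications are made explicit, both statements are immediate consequences of the cited results, so I would keep the write-up to a few lines: part (a) from Lemma~\ref{L:IllegalTurn}(b) plus the definition of $t^R_k$; part (b) from Lemma~\ref{L:fk}(d) for redness of the vertex, Lemma~\ref{L:PreLemma}(3) for existence of the edge $[t^R_k]$ in $G_k$, and the $\mathcal{CW}$ coloring convention for redness of that edge.
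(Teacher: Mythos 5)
Your proposal is correct and follows essentially the same path as the paper: part (a) is the combination of Lemma~\ref{L:IllegalTurn}(b) with the definition of $t^R_k$, and part (b) establishes redness of the vertex from Lemma~\ref{L:fk}(d) plus the $\mathcal{CW}$ coloring convention and then shows $[t^R_k]$ is an edge of $\mathcal{C}(G_k)$ by realizing the turn $\{\overline{d^a_k},d^u_k\}$ in an $f_k$-image. The only cosmetic difference is that you cite Lemma~\ref{L:PreLemma}(3) as a black box (via the cyclic reindexing of $f_k$'s decomposition from Lemma~\ref{L:fk}(e)), whereas the paper reproduces the short computation inline starting from Lemma~\ref{L:PreLemma}(2) --- i.e.\ it shows $g_{k-1,k+1}(e^u_k)$ contains $e_{k-1,l}$ and then applies $g_k$ to exhibit $e^a_k e^u_k$ as a subpath of $f_k(e^u_k)$; your extra check that $\overline{d^a_k}\neq d^u_k$ is fine but not needed for the paper's argument.
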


\begin{proof} We start with (a). Lemma \ref{L:IllegalTurn} implies each $T_k$ contains $d^u_k$. At the same time, we know $t^R_k= \{\overline{d^a_k}, d^u_k\}$, implying $t^R_k$ contains $d^u_k$, thus either $d^{pa}_k$ or $d^{pu}_k$. We now prove (b). By Lemma \ref{L:fk}d, $d^u_k$ is not a periodic direction for $Df_k$, so is not a vertex of $\mathcal{SW}(f_k)$. Thus, $d^u_k$ labels a red vertex in $G_k$. To show $[t^R_k]$ is in $\mathcal{LW}(f_k)$ it suffices to show $t^R_k$ is in $f_k(e^u_k)$.  Let $e^u_k=e_{k,l}$.  By Lemma \ref{L:PreLemma}, the path $g_{k-1,k+1} (e^u_k=e_{k,l})$ contains $e_{k-1,l}$.  Let $e_j \in \mathcal{E}_{l-1}$ be such that $g_{k-1,k+1}(e^u_k)= e_1 \dots e_{q-1} e_{k-1,l} e_{q+1} \dots e_m$. Then $f_k(e^u_k ) =g_{k,k+1} (e^u_k )= \gamma_1 \dots \gamma_{q-1} e^a_k e^u_k \gamma_{q+1}\dots \gamma_m$ where $\gamma_j =g_k(e_{i_j})$ for all $j$.  So $f_k(e^u_k)$ contains $\{ \bar d^a_k, d^u_k \}$ and $\mathcal{LW}(f_k)$ contains [$t^R_k$]. Since $[\overline{d^a_k}, d^u_k]$ contains the red vertex $d^u_k$, it is red in $G_k$. \qedhere
\end{proof}

\begin{lem}{\label{L:EdgeImage}} If $[d_{(l,i)},d_{(l,j)}]$ is in $\mathcal{C}(G_l)$, then $[D^tg_{k,l+1}(\{d_{(l,i)},d_{(l,j)}\})]$ is a purple edge in $G_k$.
\end{lem}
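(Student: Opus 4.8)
The plan is to show that if $[d_{(l,i)}, d_{(l,j)}]$ is an edge of $\mathcal{C}(G_l)$, i.e.\ $\{d_{(l,i)}, d_{(l,j)}\}$ is a turn taken by some $f_l(e)$ with $e \in \mathcal{E}(\Gamma_l)$, then its image under the turn map $Dg^t_{k,l+1}$ is a turn in $\mathcal{SW}(f_k)$, hence gives a purple edge. The key point is that applying $g_{k,l+1}$ to $f_l$ realizes (a conjugate of) $f_k$, so turns in the $f_l$-lamination get carried to turns in the $f_k$-lamination, and that periodicity/nonperiodicity of directions is controlled by Lemma \ref{L:fk}d: for each index there is exactly one nonfixed direction, namely $d^u_m$ at level $m$. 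So the real content is to rule out that $Dg^t_{k,l+1}(\{d_{(l,i)},d_{(l,j)}\})$ has $d^u_k$ as one of its endpoints.

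First I would record that $\{d_{(l,i)}, d_{(l,j)}\}$ being in $\mathcal{C}(G_l)$ means (by Lemma \ref{L:LeafTurns} applied to $f_l$, which has $(r;(\frac32 - r))$ potential by Lemma \ref{L:fk}a) that $\{d_{(l,i)}, d_{(l,j)}\}$ is a turn crossed by the realization of every leaf of $\Lambda_\phi$ in $\Gamma_l$. Next, since $g_{k,l+1}$ is a composition of the generators in the ideal decomposition and each generator is a homotopy equivalence conjugating $f_l$ to $f_k$ (after renumbering, via Lemma \ref{L:fk}e), the image under $g_{k,l+1}$ of a leaf realization in $\Gamma_l$ is the leaf realization in $\Gamma_k$; concretely, applying Lemma \ref{L:PreLemma}(1) we have that $g_{k,l+1}$ introduces no cancellation, so the turn $\{Dg_{k,l+1}(d_{(l,i)}), Dg_{k,l+1}(d_{(l,j)})\}$ genuinely appears in $g_{k,l+1}(e)$ and hence is a turn crossed by leaves of $\Lambda_\phi$ as realized in $\Gamma_k$. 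By Lemma \ref{L:LeafTurns} (converse direction, applied to $f_k$), this turn is an edge of $\mathcal{LW}(f_k)$, so $[Dg^t_{k,l+1}(\{d_{(l,i)},d_{(l,j)}\})]$ is at least a colored edge of $G_k$.

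It remains to upgrade ``colored'' to ``purple,'' i.e.\ to show both endpoint directions $Dg_{k,l+1}(d_{(l,i)})$ and $Dg_{k,l+1}(d_{(l,j)})$ are periodic (fixed) for $Df_k$. By Lemma \ref{L:fk}d, the only nonfixed direction for $Df_k$ is $d^u_k$, so it suffices to show neither image is $d^u_k$. Here I would use that $d^u_k \notin \mathcal{IM}(Df_k)$ and, more to the point, $d^u_k \notin \mathcal{IM}(Dg_{k,l+1})$: indeed $d^u_k = d_{k,m}$ with $e^u_k = e_{k,m}$, and by the structure of the folds (AM-style, via Lemma \ref{L:PreLemma}(2)) $d^u_k$ is never in the image of the direction map of any composition of generators ending at level $k$ — it is exactly the direction ``not achieved'' by $g_k$, and composing further backward only shrinks the image. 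Concretely, $Dg_k$ already omits $d^u_k$ from its image (by definition of $e^u_k$), and $Dg_{k,l+1} = Dg_k \circ Dg_{k-1,l+1}$, so $d^u_k \notin \mathcal{IM}(Dg_{k,l+1})$. Therefore $Dg_{k,l+1}(d_{(l,i)}) \neq d^u_k$ and likewise for $d_{(l,j)}$, so both are fixed directions of $Df_k$, both label purple vertices, and the edge between them is purple in $G_k$.

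The main obstacle I anticipate is the endpoint-is-not-$d^u_k$ step: one must be careful that it is genuinely true that $d^u_k$ lies outside the image of the iterated direction map $Dg_{k,l+1}$, not just outside $\mathcal{IM}(Dg_k)$ in isolation — but this follows cleanly because $\mathcal{IM}(Dg_{k,l+1}) \subseteq \mathcal{IM}(Dg_k)$ and $d^u_k \notin \mathcal{IM}(Dg_k)$ by the very definition of the unachieved direction in Notation \ref{N:IdealDecompositions}(5). The rest is bookkeeping with Lemma \ref{L:PreLemma} (no cancellation, so turns really persist) and Lemma \ref{L:LeafTurns} (turns of $\mathcal{LW}$ $\leftrightarrow$ turns of leaf realizations), both already in hand.
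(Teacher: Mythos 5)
Your proposal is correct, but it takes a genuinely different route from the paper's proof of Lemma~\ref{L:EdgeImage}. The paper's proof is a direct combinatorial induction on the fold sequence: starting from the hypothesis that $[d_{(k-1,i)},d_{(k-1,j)}]$ appears as a turn in some explicit $f_{k-1}^p(e_{k-1,t})$, it uses Lemma~\ref{L:PreLemma} together with explicit casework on the three possible forms of $g_k$ (fixing the edges, prepending $e_{k,l}$, appending $e_{k,l}$) to locate the image turn inside $f_k^{p+1}(e_{k,t})$, and then inducts. Your version sidesteps the casework entirely by going through lamination theory: by the converse direction of Lemma~\ref{L:LeafTurns} the turn appears in every leaf realization in $\Gamma_l$, the change-of-marking $g_{k,l+1}$ carries that leaf realization to the leaf realization in $\Gamma_k$, and then the forward direction of Lemma~\ref{L:LeafTurns} gives an edge of $\mathcal{LW}(f_k)$. (You have the names ``forward'' and ``converse'' swapped relative to the statement, but you apply them correctly.) For the upgrade from colored to purple, both you and the paper use essentially the same observation: $d^u_k\notin\operatorname{Im}(Dg_k)\supseteq\operatorname{Im}(Dg_{k,l+1})$, hence by Lemma~\ref{L:fk}d both endpoint directions are fixed. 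The trade-off is that the paper's proof is more elementary and self-contained, while yours is shorter and more conceptual but leans harder on the lamination machinery of Section~\ref{S:bc}.

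One detail you should tighten: you justify ``no cancellation'' under $g_{k,l+1}$ by citing Lemma~\ref{L:PreLemma}(1), but that lemma only rules out cancellation within a single edge image $g_{k,l+1}(e)$. When you push an entire (bi-infinite) leaf realization through $g_{k,l+1}$ you must also rule out cancellation at the junctions, i.e.\ you need every turn $\{\overline{d_i},d_{i+1}\}$ crossed by the leaf to remain nondegenerate under $D^tg_{k,l+1}$. This is true, but it needs an argument: such turns are $f_l$-legal (leaf realizations for a train track are legal), $f_l$ factors as $g_{l,k+1}\circ g_{k,l+1}$, and a turn degenerate for $g_{k,l+1}$ would be degenerate for $f_l$, contradiction. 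With that sentence inserted the proof is complete; without it, a careful reader will notice that you are silently using more than Lemma~\ref{L:PreLemma}(1) says. The passing reference to Lemma~\ref{L:PreLemma}(2) in your final paragraph is also a red herring---the fact you actually need there is just the definitional statement $d^u_k\notin\operatorname{Im}(Dg_k)$ from Notation~\ref{N:IdealDecompositions}(5), which you go on to use correctly.
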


\begin{proof}  It suffices to show two things: \newline
\indent (1) $D^tg_{k,l+1}(\{d_{(l,i)},d_{(l,j)}\})$ is a turn in some edge path $f_l^p(e_{l,m})$ with $p \geq 1$ and \newline
\indent (2) $Dg_{k,l+1}(d_{l,i})$ and $Dg_{k,l+1}(d_{l,j})$ are periodic directions for $f_l$. \newline
\noindent We use induction. Start with (1). For the base case assume $[d_{(k-1,i)}, d_{(k-1,j)}]$ is in $\mathcal{C}(G_{k-1})$, so $f_{k-1}^p(e_{k-1,t})= s_1 \dots \overline{e_{(k-1,i)}} e_{(k-1,j)} \dots s_m$ for some $e_{(k-1,t)},s_1, \dots s_m \in \mathcal{E}_{k-1}$ and $p \geq 1$. By Lemma \ref{L:PreLemma}, $e_{k-1,t}$ is in the path $g_{k-1} \circ \cdots \circ g_1 \circ g_n \circ \cdots \circ g_{k+1} (e_{k,t})$. Thus, since $f_{k-1}^p(e_{k-1,t})=s_1 \dots \overline{e_{(k-1,i)}} e_{(k-1,j)} \dots s_m$ and no $g_{i,j}(e_{j-1,t})$ can have cancellation, $s_1 \dots \overline{e_{(k-1,i)}} e_{(k-1,j)} \dots s_m$ is a subpath of $f_{k-1}^p \circ g_{k-1} \circ \cdots \circ g_1 \circ g_n \circ \cdots \circ g_{k+1} (e_{k,t})$. Apply $g_k$ to $f_{k-1}^p \circ g_{k-1} \circ \cdots \circ g_1 \circ g_n \circ \cdots \circ g_k (e_{k-1,t})$ to get $f_k^{p+1}(e_{k,t})$.

Suppose $Dg_k(e_{k-1,i})=e_{k,i}$ and $Dg_k(e_{k-1,j})=e_{k,j}$. Then $g_k(\dots\overline{e_{k-1,i}} e_{k-1,j} \dots) = \dots \overline{e_{(k,i)}} e_{(k,j)} \dots$, with possibly different edges before and after $\overline{e_{k,i}}$ and $e_{k,j}$ than before and after $\overline{e_{k-1,i}}$ and $e_{k-1,j}$. Thus, here, $f^{p+1}_k( \dots \overline{e_{(k-1,i)}} e_{(k-1,j)} \dots)$ contains $\{d_{(k,i)},d_{(k,j)}\}$, which here is $D^tg_k(\{d_{(k-1,i)}, d_{(k-1,j)}\})$. So [$D^tg_k(\{d_{(k-1,i)}, d_{(k-1,j)}\})$] is an edge in $G_k$.

Suppose $g_k\colon e_{k-1,j} \mapsto e_{k,l} e_{k,j}$. Then $g_k(\dots \overline{e_{k-1,i}} e_{k-1,j}\dots)= \dots \overline{e_{k,i}} e_{k,l}e_{k,j} \dots$,
(again with possibly different edges before and after $\overline{e_{k,i}}$ and $e_{k,j}$). So $g_k(\dots \overline{e_{(k-1,i)}} e_{(k-1,j)} \dots)$ contains $\{\overline{d_{(k,l)}},d_{(k,j)}\}$, which here is $D^tg_k(\{d_{(k-1,i)},d_{(k-1,j)}\})$, so [$D^tg_k(\{d_{(k-1,i)},d_{(k-1,j)}\})$] again is in $G_k$.

Finally, suppose $g_k: e_{k-1,j} \mapsto e_{k,j}e_{k,l}$ defined $g_k$. Unless $\overline{e_{k-1,i}} = e_{(k-1,j)}$, we have \newline \noindent $g_k(\dots \overline{e_{(k-1,i)}} e_{(k-1,j)} \dots)=\dots \overline{e_{(k,i)}} e_{(k,j)} e_{(k,l)} \dots$, containing $\{d_{(k,i)},d_{(k,j)}\}= D^tg_k(\{d_{(k-1,i)},d_{(k-1,j)}\})$. So [$D^tg_k(\{d_{(k-1,i)},d_{(k-1,j)}\})$] is an edge in $G_k$ here also.

If $\overline{e_{k-1,i}} = e_{k-1,j}$, we are in a reflection of the previous case. The other cases ($g_k: \overline{e_{k-1,i}} \mapsto \overline{e_{k,i}} e_{k,l}$ and $g_k: \overline{e_{k-1,i}} \mapsto e_{k,l} \overline{e_{k,i}}$) follow similarly by symmetry. The base case for (1) is complete.

We prove the base case of (2). Since $[D^tg_k(\{d_{(k-1,i)},d_{(k-1,j)}\})]= [Dg_k(d_{(k-1,i)}),Dg_k(d_{(k-1,j)})]$, both vertex labels of $[D^tg_k(\{d_{(k-1,i)},d_{(k-1,j)}\})]$ are in $\mathcal{IM}(Dg_k)$. By Lemma \ref{L:fk}d, this means both vertices are periodic. So $[D^tg_k(\{d_{(k-1,i)},d_{(k-1,j)}\})]$ is in $\mathcal{PI}(G_k)$. The base case is proved. Suppose inductively $[d_{(l,i)},d_{(l,j)}]$ is an edge in $\mathcal{C}(G_l)$ and $[D^tg_{k-1,l+1}(\{d_{(l,i)},d_{(l,j)}\})]$ is an edge in $\mathcal{PI}(G_{k-1})$. The base case implies $[D^tg_k(D^tg_{k-1,l+1}(\{d_{(l,i)},d_{(l,j)}\})]$ is an edge in $\mathcal{PI}(G_k)$. But $D^tg_k(D^tg_{k-1,l+1}(\{d_{(l,i)},d_{(l,j)}\}))=$ \newline \noindent $D^tg_{k,l+1}(\{d_{(l,i)},d_{(l,j)}\})$. The lemma is proved. \qedhere
\end{proof}

\begin{lem}{\label{L:RedEdgeImage}} (Properties of $t^R_k$ and $e^R_k$). For each $1 \leq l,k \leq n$
~\\
\vspace{-5.5mm}
\begin{description}
\item[a.] $[D^tg_{l,k}(\{\overline{d^a_{k-1}}, d^u_{k-1} \})]$ is a purple edge in $G_l$.\\[-5mm]
\item[b.] $[\overline{d^a_k}, d^u_k]$ is not in $D^Cg_k(G_{k-1})$.
\end{description}
\end{lem}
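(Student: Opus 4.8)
The two parts are closely related, so I would handle them in tandem. For part (a), the key observation is that the turn $\{\overline{d^a_{k-1}}, d^u_{k-1}\} = t^R_{k-1}$ is, by Corollary \ref{C:UnachievedDirection}, a \emph{red} edge of $G_{k-1}$, i.e. an edge of $\mathcal{C}(G_{k-1})$ that lies in $\mathcal{LW}(f_{k-1})$ but not in $\mathcal{SW}(f_{k-1})$. The point is that even though $t^R_{k-1}$ is red, it \emph{is} an edge of $\mathcal{C}(G_{k-1})$, so Lemma \ref{L:EdgeImage} applies directly with $l = k-1$: for any index $m \neq k-1$ (in particular for $l$ ranging over the cyclic index set), $[D^tg_{m,k}(\{\overline{d^a_{k-1}}, d^u_{k-1}\})]$ is a \emph{purple} edge of $G_m$. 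So part (a) is essentially an immediate corollary of Lemma \ref{L:EdgeImage} together with Corollary \ref{C:UnachievedDirection}b; I only need to check that $t^R_{k-1}$ really is an edge of $\mathcal{C}(G_{k-1})$ and not just a turn — but this is exactly the content of Corollary \ref{C:UnachievedDirection}b, which says $[\overline{d^a_{k-1}}, d^u_{k-1}]$ is a (red) edge in $G_{k-1}$, hence in $\mathcal{C}(G_{k-1})$. The case $l = k-1$ itself is the trivial one, where the image is $t^R_{k-1}$ itself, which is red, not purple — so I should be slightly careful about the index range in the statement; presumably the intended statement is for $l \neq k-1$, or the identity map on $G_{k-1}$ is implicitly excluded. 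I would state the proof for $l \neq k-1$ and note the degenerate case.

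For part (b), I want to show $[\overline{d^a_k}, d^u_k] = e^R_k$ is not in the image $D^Cg_k(G_{k-1})$ of the colored part of $G_{k-1}$ under the induced map $D^Cg_k = D^Cg_{k,k}$. Suppose it were; then there is a colored edge $[d_{(k-1,i)}, d_{(k-1,j)}]$ of $\mathcal{C}(G_{k-1})$ with $D^tg_k(\{d_{(k-1,i)}, d_{(k-1,j)}\}) = \{\overline{d^a_k}, d^u_k\}$. But by part (a) (applied with the roles set up so that the source is $G_{k-1}$ and the target is $G_k$, i.e. the case $l = k$, $m = k-1$ in Lemma \ref{L:EdgeImage}), the image of any colored edge of $G_{k-1}$ under $D^Cg_k$ is a \emph{purple} edge of $G_k$. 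On the other hand, $e^R_k$ is red in $G_k$ by Corollary \ref{C:UnachievedDirection}b. A purple edge cannot equal a red edge, contradiction. Equivalently and more directly: $d^u_k \notin \mathcal{IM}(Df_k)$ by Lemma \ref{L:fk}d, but every vertex of an edge in $D^Cg_k(G_{k-1})$ is of the form $Dg_k(d)$ and hence (being in $\mathcal{IM}(Dg_k) \subseteq \mathcal{IM}(Df_k)$ after enough iteration, or directly since periodic/fixed) is periodic, whereas $d^u_k$ is not periodic — so no edge in $D^Cg_k(G_{k-1})$ can have $d^u_k$ as an endpoint, and $e^R_k$ does.

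I expect no real obstacle here: both parts reduce quickly to Lemma \ref{L:EdgeImage}, Lemma \ref{L:fk}d, and Corollary \ref{C:UnachievedDirection}. The only thing requiring a moment of care is bookkeeping with the cyclic indexing of the generators $g_{l,k}$ (composing around the circle $\mathbf{Z}/n\mathbf{Z}$) and making sure Lemma \ref{L:EdgeImage} is being invoked with the correct source and target ltt structures; in particular I must confirm that $t^R_{k-1}$ qualifies as an input to Lemma \ref{L:EdgeImage} (it does, being a genuine colored edge of $G_{k-1}$, albeit red), and that the degenerate case where the composition is the identity is excluded from or consistent with the stated index range.
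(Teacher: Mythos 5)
Your proof is correct and follows essentially the same route as the paper's: both parts reduce to Lemma \ref{L:EdgeImage} together with Corollary \ref{C:UnachievedDirection}b (that $t^R_{k-1}$ is a genuine colored edge, so the lemma applies, and that $e^R_k$ is red, so it cannot be a purple image). Your observations about the degenerate index case and the alternative route through Lemma \ref{L:fk}d are sound but incidental; the core argument matches the paper.
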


\begin{proof} By Lemma \ref{L:EdgeImage}, it suffices to show for (a) that $[\overline{d^a_{k-1}}, d^u_{k-1}]$ is a colored edge of $G_{k-1}$. This was shown in Corollary \ref{C:UnachievedDirection}b. By Lemma \ref{L:EdgeImage}, each edge in $\mathcal{C}(G_{k-1})$ is mapped to a purple edge in $G_k$. On the other hand, $[\overline{d^a_k}, d^u_k]$ is a red edge in $G_k$. Thus, $[\overline{d^a_k}, d^u_k]$ is not in $D^Cg_k(G_{k-1})$ and (b) is proved.  \qedhere
\end{proof}

\vskip10pt

\noindent Each $G_k$ has a unique red edge ($e^R_k=[t^R_k]=[\overline{d^a_k},d^u_k]$):

\begin{lem}{\label{L:1Edge}} $\mathcal{C}(G_k)$ can have at most 1 edge segment connecting the nonperiodic direction red vertex $d^u_k$ to the set of purple periodic direction vertices. \end{lem}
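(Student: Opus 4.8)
The plan is to prove the sharper statement that the unique red vertex $d^u_k$ of $G_k$ is incident to \emph{exactly} one colored edge, namely $e^R_k=[t^R_k]=[\overline{d^a_k},d^u_k]$, whose presence is already given by Corollary \ref{C:UnachievedDirection}b. Since $d^u_k$ is the only nonperiodic (indeed nonfixed) direction of $f_k$ by Lemma \ref{L:fk}d, it is the only red vertex of $G_k$; every other vertex is purple, and $d^u_k\neq\overline{d^a_k}$ because $e^a_k\neq(e^u_k)^{\pm 1}$, so $e^R_k$ does join $d^u_k$ to a purple periodic-direction vertex. Hence bounding the $\mathcal{C}(G_k)$-valence of $d^u_k$ by $1$ is exactly the lemma.

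First I would translate the question into combinatorics of edge paths. By Definition \ref{D:whiteheadgraphs}, a colored edge $[d^u_k,d']$ lies in $\mathcal{C}(G_k)=\mathcal{LW}(f_k)$ if and only if some iterate $f_k^m(E)$, with $E\in\mathcal{E}(\Gamma_k)$, crosses the turn $\{d^u_k,d'\}$ at the vertex of the rose $\Gamma_k$. So I must determine which turns through $d^u_k=D_0(e^u_k)$ can be crossed at all. Two structural facts drive this. First, in an ideal decomposition $f_k=g_k\circ g_{k-1}\circ\cdots\circ g_{k+1}$ the last generator applied is $g_k$, so every iterate $f_k^m(E)$ has the form $g_k(\beta)$ for an edge path $\beta$ in $\Gamma_{k-1}$. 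Second, by the form of the generators of an ideal decomposition (Notation \ref{N:IdealDecompositions}; see also the setup of Lemma \ref{L:PreLemma}), $g_k$ sends $e^{pu}_{k-1}\mapsto e^a_k e^u_k$ (hence $\overline{e^{pu}_{k-1}}\mapsto\overline{e^u_k}\,\overline{e^a_k}$) and sends every oriented edge $e_{k-1,i}\neq(e^{pu}_{k-1})^{\pm 1}$ to a single oriented edge, which is not $(e^u_k)^{\pm 1}$; moreover no such $g_k(\beta)$ has cancellation, by Lemma \ref{L:PreLemma}.

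From these I would extract the key claim: in any path $g_k(\beta)$, every occurrence of $e^u_k$ is immediately preceded by $e^a_k$ and every occurrence of $\overline{e^u_k}$ is immediately followed by $\overline{e^a_k}$. The point is that $e^u_k$ (resp.\ $\overline{e^u_k}$) appears only as the second (resp.\ first) letter of the two-letter block $e^a_k e^u_k$ (resp.\ $\overline{e^u_k}\,\overline{e^a_k}$) produced from $(e^{pu}_{k-1})^{\pm 1}$, no $g_k$-image of a single edge begins with $e^u_k$ or ends with $\overline{e^u_k}$, and no cancellation keeps these blocks intact under concatenation; in particular $e^u_k$ is never the first letter and $\overline{e^u_k}$ never the last letter of $g_k(\beta)$. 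Therefore a turn $\{d^u_k,d'\}$ crossed by $g_k(\beta)$ must come from a subpath $e^a_k e^u_k$ or from a subpath $\overline{e^u_k}\,\overline{e^a_k}$, and in both cases equals $\{\overline{d^a_k},d^u_k\}=t^R_k$. So $e^R_k=[t^R_k]$ is the only colored edge of $G_k$ at $d^u_k$, which gives the lemma with room to spare.

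I expect the only genuine difficulty to be a careful justification of the key claim — that the letters $e^u_k,\overline{e^u_k}$ are never manufactured at a seam between consecutive $g_k$-images but only inside the block $e^a_k e^u_k$. This rests precisely on $e^a_k\neq(e^u_k)^{\pm 1}$, on $g_k$ carrying each edge other than $(e^{pu}_{k-1})^{\pm 1}$ to a single edge distinct from $(e^u_k)^{\pm 1}$, and on the no-cancellation statement of Lemma \ref{L:PreLemma}; once that is nailed down the turn bookkeeping is routine.
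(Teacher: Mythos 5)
Your proposal is correct and follows essentially the same route as the paper's proof: you observe that every $f_k^m(E)$ is $g_k$ applied to a path in $\Gamma_{k-1}$, so (using the form of $g_k$ and the no-cancellation statement of Lemma \ref{L:PreLemma}) each occurrence of $e^u_k$ or $\overline{e^u_k}$ sits inside the block $e^a_k e^u_k$ or $\overline{e^u_k}\,\overline{e^a_k}$, forcing every crossed turn at $d^u_k$ to be $t^R_k$. Your write-up is in fact a more carefully justified version of the paper's (rather terse) argument, proving the sharper ``exactly one colored edge at $d^u_k$'' statement.
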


\begin{proof}  First note that the nonperiodic direction $d^u_k$ labels the red vertex in $G_k$. If $g_k(e_{k-1,i})= e_{k,i}e_{k,j}$, then the red vertex in $G_k$ is $\overline{d_{k,i}}$ (where $d_{k,i}=D_0(e_{k,i})$ and $d_{k,j}=D_0(e_{k,j})$). The vertex $\overline{d_{k,i}}$ will be adjoined to the vertex for $d_{k,j}$ and only $d_{k,j}$: each occurrence of $e_{k-1,i}$ in the image under $g_{k-1,1}$ of any edge has been replaced by $e_{k,i}e_{k,j}$ and every occurrence of $\overline{e_{k,i}}$ has been replaced by $\overline{e_{k,i}} \overline{e_{k,j}}$, ie, there are no copies of $e_{k,j}$ without $e_{k,i}$ following them and no copies of $\overline{e_{k,i}}$ without $\overline{e_{k,j}}$ preceding them. \qedhere
\end{proof}

\vskip10pt

\noindent The red edge and vertex of $G_k$ determine $g_k$:

\begin{lem}{\label{L:NG}} Suppose that the unique red edge in $G_k$ is $[t^R_k]= [d_{(k,j)}, \overline{d_{(k,i)}}]$ and that the vertex representing $d_{k,j}$ is red. Then $g_k(e_{k-1,j}) = e_{k,i} e_{k,j}$ and $g_k(e_{k-1,t})=e_{k,t}$ for $e_{k-1,t} \neq (e_{k-1,j})^{\pm 1}$, where $D_0(e_{s,t})=d_{s,t}$ and $D_0(\overline{e_{s,t}}) = \overline{d_{s,t}}$ for all $s$, $t$.  \end{lem}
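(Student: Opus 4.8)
The plan is to reverse-engineer the fold $g_k$ from the red data in $G_k$. The key observation is that in an ideal decomposition, $g_k$ is a proper full fold of a rose, so by AM Property VI (Lemma-level consequence of the decomposition) it is determined by a single defining rule $g_k \colon e^{pu}_{k-1} \mapsto e^a_k e^u_k$ together with the identity rule on all other edges; equivalently, $g_k$ is pinned down by the ordered pair $(e^u_k, e^a_k)$, i.e. by the unachieved direction $d^u_k$ and the twice-achieved direction $d^a_k$. So it suffices to show that the hypothesis --- "the unique red edge in $G_k$ is $[d_{(k,j)},\overline{d_{(k,i)}}]$, and the vertex $d_{k,j}$ is red" --- forces $d^u_k = d_{k,j}$ and $d^a_k = d_{k,i}$.

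First I would recall, via Corollary \ref{C:UnachievedDirection}(b), that in $G_k$ the vertex $d^u_k$ is red and the edge $[t^R_k] = [\overline{d^a_k}, d^u_k]$ is red. Next, by Lemma \ref{L:fk}(d), $d^u_k$ is the \emph{unique} nonfixed (hence nonperiodic) direction for $f_k$; since periodic directions label purple vertices and nonperiodic ones label red vertices, $d^u_k$ is the \emph{unique} red vertex of $G_k$. Combined with the hypothesis that $d_{k,j}$ is a red vertex, this gives $d_{k,j} = d^u_k$ immediately. Then the given red edge $[d_{(k,j)}, \overline{d_{(k,i)}}] = [d^u_k, \overline{d_{(k,i)}}]$ must coincide with the red edge $[t^R_k] = [d^u_k, \overline{d^a_k}]$: by Lemma \ref{L:1Edge}, $\mathcal{C}(G_k)$ has at most one edge joining the red vertex $d^u_k$ to the purple vertices, and by Corollary \ref{C:UnachievedDirection}(b) the edge $[t^R_k]$ is such an edge, so it is \emph{the} unique one; since the hypothesized red edge also joins $d^u_k$ to a purple vertex (all vertices other than $d^u_k$ are purple), it must equal $[t^R_k]$, forcing $\overline{d_{(k,i)}} = \overline{d^a_k}$, i.e. $d_{k,i} = d^a_k$.

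Having identified $e^u_k = e_{k,j}$ and $e^a_k = e_{k,i}$, I would finish by invoking AM Property VI (or, equivalently, item (II)(b) of Proposition \ref{P:IdealDecomposition} together with Notation \ref{N:IdealDecompositions}(1)): $g_k$ is defined by $g_k \colon e^{pu}_{k-1} \mapsto e^a_k e^u_k$ and $g_k(e_{k-1,t}) = e_{k,t}$ for $e_{k-1,t} \neq (e^{pu}_{k-1})^{\pm 1}$; and since $e^{pu}_{k-1} = e_{k-1,m}$ exactly when $e^u_k = e_{k,m}$, here $e^{pu}_{k-1} = e_{k-1,j}$. Substituting gives $g_k(e_{k-1,j}) = e_{k,i}e_{k,j}$ and $g_k(e_{k-1,t}) = e_{k,t}$ for $e_{k-1,t} \neq (e_{k-1,j})^{\pm 1}$, as claimed, with the direction conventions $D_0(e_{s,t}) = d_{s,t}$ as in the standing notation. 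The only mildly delicate point --- the step I expect to need the most care --- is the uniqueness argument pinning the hypothesized red edge to $[t^R_k]$: one must be sure that "unique red edge of $G_k$" in the hypothesis matches "unique edge of $\mathcal{C}(G_k)$ from $d^u_k$ to the purple vertices" from Lemma \ref{L:1Edge}, which uses that every non-$d^u_k$ vertex of $G_k$ is purple (Lemma \ref{L:fk}(d)) and that $[t^R_k]$ is genuinely colored red and incident to $d^u_k$ (Corollary \ref{C:UnachievedDirection}(b)), so no second red edge can hide among the purple-to-purple edges.
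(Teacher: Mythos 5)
Your proposal is correct and follows essentially the same route as the paper: identify $d^u_k = d_{k,j}$ from the uniqueness of the red vertex, identify $d^a_k = d_{k,i}$ from the uniqueness of the red edge incident to $d^u_k$, and then read off $g_k$ from the ideal-decomposition format $g_k \colon e^{pu}_{k-1} \mapsto e^a_k e^u_k$ with the identity on all other edges. The paper's own proof is terser (it leans on Corollary \ref{C:UnachievedDirection} and its proof rather than naming Lemma \ref{L:fk}(d) and Lemma \ref{L:1Edge} explicitly), but the logical content is the same, and your more explicit treatment of the two uniqueness points is a clean way to present it.

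One small caution: you invoke \textbf{AM Property VI} in the final step, but note that in the paper Lemma \ref{L:NG} is itself used to prove AM Property VI (see the proof of Proposition \ref{P:am}), so citing it here would be circular. Your parenthetical alternative --- item (II)(b) of Proposition \ref{P:IdealDecomposition} together with Notation \ref{N:IdealDecompositions}(1) --- is the right thing to cite, and is exactly what the paper means by ``the ideal decomposition definition.'' If you drop the mention of AM Property VI and keep only that alternative, the argument is unobjectionable.
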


\begin{proof}  By the ideal decomposition definition, $g_k$ is defined by $g_k: e_{k-1,j} \mapsto e_{k,i} e_{k,j}$. Corollary \ref{C:UnachievedDirection} implies $D_0(e_{k,j})=d^u_k$, i.e. the direction associated to the red vertex of $G_k$. So the second index of $d^u_k$ uniquely determines the index $j$, so $e_{k-1,j}= e^{pu}_{k-1}$ and $e_{k,i}=e^a_k$.  Additionally, Corollary \ref{C:UnachievedDirection}'s proof implies $[\overline{d_{(k,i)}}, d_{(k,j)}]$ is $G_k$'s red edge. So $e_{k,i}=e^a_k$. And $g_k$ must be $g_k : e^{pu}_{k-1} \mapsto e^a_k e^u_k$, i.e, $e_{k-1,j} \mapsto e_{k,i} e_{k,j}$. \qedhere
\end{proof}

\begin{lem}{\label{L:PurpleEdgeImages}} (Induced maps of ltt structures)
~\\
\vspace{-5.7mm}
\begin{description}
\item[a.] $D^Cf_k$ maps $\mathcal{PI}(G_k)$ isomorphically onto itself via a label-preserving isomorphism. \\[-5.8mm]
\item[b.] The set of purple edges of $G_{k-1}$ is mapped by $D^Cg_k$ injectively into the set of purple edges of $G_k$. \\[-5.8mm]
\item[c.] For each $0 \leq l,k \leq n$, $Dg_{l, k+1}$ induces an isomorphism from $\mathcal{SW}(f_k)$ onto $\mathcal{SW}(f_l)$. \\[-5mm]
\end{description}
\end{lem}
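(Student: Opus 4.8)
The plan is to prove the three parts in the order (b), (a), (c), since the injectivity of $D^Cg_k$ on purple edges is the atomic fact from which the others follow by composing and counting. First I would prove (b). By Lemma \ref{L:EdgeImage}, each purple edge $[d_{(k-1,i)}, d_{(k-1,j)}]$ of $G_{k-1}$ (being in $\mathcal{C}(G_{k-1})$) is sent by $D^tg_k$ to a purple edge $[Dg_k(d_{(k-1,i)}), Dg_k(d_{(k-1,j)})]$ of $G_k$; so the image really does land among the purple edges, and what remains is injectivity. For injectivity I would argue on vertices: by Lemma \ref{L:fk}(d), $Df_k$ has exactly $2r-1$ fixed (periodic) directions and $d^u_k$ is the unique nonfixed one. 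The vertices of $\mathcal{PI}(G_{k-1}) = \mathcal{SW}(f_{k-1})$ are exactly the $2r-1$ periodic directions of $Df_{k-1}$. If $Dg_k$ identified two of these periodic directions of $\Gamma_{k-1}$, then $Df_{k-1} = (g_k \circ \cdots \circ g_1 \circ g_n \circ \cdots \circ g_{k+1})$ composed appropriately would have at most $2r-2$ directions in its image — contradicting that $f_{k-1}$ has $2r-1$ gates (Lemma \ref{L:fk}(c)); this is exactly the Lemma \ref{L:GateCollapsing}-style counting argument applied to the ideal decomposition of $f_{k-1}$ guaranteed by Lemma \ref{L:fk}(e). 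Hence $Dg_k$ is injective on the periodic-direction vertices of $G_{k-1}$, and therefore $D^Cg_k$ is injective on the (edge-)set of purple edges.

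Next I would prove (a). Here $k = l$ and the map is $D^Cf_k$, which by Notation \ref{N:IdealDecompositions}(3),(10) is the composite $D^Cg_k \circ D^Cg_{k-1} \circ \cdots \circ D^Cg_{k+1}$ as a map $\mathcal{C}(G_k) \to \mathcal{C}(G_k)$ (indices mod $n$). Restricting to purple, part (b) applied to each generator shows $D^Cf_k$ maps the set of $|E(\mathcal{SW}(f_k))|$ purple edges of $G_k$ injectively into itself; an injective self-map of a finite set is a bijection, so $D^Cf_k$ permutes the purple edges. On vertices, the $2r-1$ periodic directions are each fixed by $Df_k$ (Lemma \ref{L:fk}(b): $f_k$ is rotationless, so all periodic directions are fixed), so $D^Cf_k$ fixes every vertex of $\mathcal{PI}(G_k)$. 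A graph automorphism fixing every vertex and permuting the edges, combined with the fact that $\mathcal{PI}(G_k) = \mathcal{SW}(f_k)$ is simplicial (each edge determined by its two endpoints), forces $D^Cf_k$ to be the identity on edges as well; in particular it is a label-preserving isomorphism of $\mathcal{PI}(G_k)$ onto itself.

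Finally, (c): fix $0 \le l, k \le n$ and consider $Dg_{l,k+1}$, which (Notation \ref{N:IdealDecompositions}(4)) is $g_l \circ \cdots \circ g_1 \circ g_n \circ \cdots \circ g_{k+1}$ up to cyclic indexing. By (b), each individual $D^Cg_i$ maps purple edges injectively to purple edges and, via the vertex argument above, injectively on periodic-direction vertices; composing, $Dg_{l,k+1}$ sends $\mathcal{SW}(f_k)$ injectively (as a graph map) into $\mathcal{SW}(f_l)$. Since both graphs are $\cong \mathcal{IW}(\phi)$ and hence have the same finite number of vertices and edges (Lemma \ref{L:fk}(a),(c) and the pNp-freeness giving $\mathcal{SW}(f_j) \cong \mathcal{IW}(\phi)$ for all $j$), an injection of graphs between two finite graphs with equal vertex counts and equal edge counts is an isomorphism. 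This gives (c), and in particular (a) is recovered as the special case $l = k$ together with the label-preservation already established.

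The main obstacle I anticipate is part (b)'s injectivity on edges, specifically making the counting argument airtight: one must be careful that the relevant $2r-1$ periodic directions of $\Gamma_{k-1}$ are precisely the vertices of $\mathcal{SW}(f_{k-1})$ and that $Dg_k$ restricted to them cannot collapse — this is where Lemma \ref{L:GateCollapsing} (applied to the ideal decomposition of $f_{k-1}$ from Lemma \ref{L:fk}(e)) does the real work, and one must also confirm that no purple edge of $G_{k-1}$ is sent to the red edge of $G_k$, which is exactly the content of Lemma \ref{L:RedEdgeImage}(b). Once injectivity on purple edges is in hand, (a) and (c) are soft pigeonhole-and-simpliciality arguments.
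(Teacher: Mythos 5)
Your proposal is correct, but for part (b) you take a genuinely different route than the paper. The paper's argument for (b) is structural: it observes (from AM property VI / the generator form) that $d^a_k$ is the \emph{only} direction with two $Dg_k$-preimages, namely $d^{pa}_{k-1}$ and $d^{pu}_{k-1}$, invokes Lemma~\ref{L:IllegalTurn} to conclude that one of those two preimages equals the red vertex $d^u_{k-1}$, and then uses that $[t^R_{k-1}]$ is the unique colored edge at $d^u_{k-1}$ to show each edge at $d^a_k$ has at most one \emph{purple} preimage. Your argument instead proves outright that $Dg_k$ is injective on the $2r-1$ periodic-direction vertices of $G_{k-1}$, via the counting observation that $f_{k-1} = (g_{k-1}\circ\cdots\circ g_{k+1})\circ g_k$ fixes each of those directions, so a $Dg_k$-collision would force $Df_{k-1}$ to collide two fixed directions. (Note that Lemma~\ref{L:GateCollapsing} as stated gives injectivity of the \emph{tail} $D(g^i)$ rather than of the head $Dg_k$, so you cannot cite it verbatim — but the same pigeonhole on a fixed-direction-preserving composite proves the head case too, which is exactly what you mean by ``Lemma~\ref{L:GateCollapsing}-style.'' Also, your displayed formula for $Df_{k-1}$ is actually the formula for $f_k$; the correct decomposition is $f_{k-1}=g_{k-1}\circ\cdots\circ g_{k+1}\circ g_k$, but your ``composed appropriately'' hedge and the surrounding reasoning make the intent clear.) Your vertex-injectivity approach is cleaner in one respect: it makes part (c) immediate, since injectivity on vertices plus simpliciality of $\mathcal{SW}(f_j)\cong\mathcal{IW}(\phi)$ handles the incidence-preservation that the paper addresses with a separate argument; the paper, by contrast, obtains (c)'s bijectivity from the factorization $D^Cf_k = D^Cg_{k,l+1}\circ D^Cg_{l,k+1}$ and (a), and then still has to check that shared vertices map to shared vertices. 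For (a), the paper proves it directly from Lemma~\ref{L:EdgeImage} plus vertex-fixing without passing through (b), while you derive it from (b) by finite pigeonhole — a small reordering, both valid.
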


\begin{proof} We prove (a). Lemma \ref{L:EdgeImage} implies that $D^Cf_k$ maps $\mathcal{PI}(G_k)$ into itself. However, $Df_k$ fixes all directions labeling vertices of $\mathcal{SW}(f_k)=\mathcal{PI}(G_k)$. Thus, $D^Cf_k$ restricted to $\mathcal{PI}(G_k)$, is a label-preserving graph isomorphism onto its image.

We prove (b). Since $d^a_k$ is the only direction with more than one $Dg_k$ preimage, and these two preimages are $d^{pa}_{k-1}$ and $d^{pu}_{k-1}$, the $[d_{(k,i)}, d^a_k]$ are the only edges in $G_k$ with more than one $D^Cg_k$ preimage. The two preimages are the edges $[d_{(k-1,i)}, d^{pa}_{k-1}]$ and $[d_{(k-1,i)}, d^{pu}_{k-1}]$ in $G_{k-1}$. However, by Lemma \ref{L:IllegalTurn}, either $e^u_{k-1}= e^{pu}_{k-1}$ or $e^u_{k-1}=e^{pa}_{k-1}$. So one of the preimages of $d^a_k$ is actually $d^u_{k-1}$, i.e. one of the preimage edges is actually $[d_{(k-1,i)}, d^u_{k-1}]$. Since $[t^R_{k-1}]$ is the only edge of $C(G_{k-1})$ containing $d^u_{k-1}$, one of the preimages of $[d_{(k,i)}, d^a_k]$ must be $[t^R_{k-1}]$, leaving only one possible purple preimage.

We prove (c). By (b), the set of $G_k$'s purple edges is mapped injectively by $D^Cg_{l, k+1}$ into the set of $G_l$'s purple edges. Likewise, the set of $G_l$'s purple edges is mapped injectively by $D^Cg_{k, l+1}$ into $G_k$. (a) implies $D^Cf_k=(D^Cg_{k, l+1}) \circ (D^Cg_{l, k+1})$ and $D^Cf_l=(D^Cg_{l, k+1}) \circ (D^Cg_{k, l+1})$ are bijections. So, the map $D^Cg_{l, k+1}$ induces on the set of $G_k$'s purple edges is a bijection. It is only left to show that two purple edges share a vertex in $G_k$ if and only if their $D^Cg_{l, k+1}$ images share a vertex in $G_l$.

If $[x, d_1]$ and $[x, d_2]$ are in $\mathcal{PI}(G_k)$, $D^Cg_{l, k+1}([x, d_1])=[Dg_{l, k+1}(x), Dg_{l, k+1}(d_1)]$ and $D^tg_{l, k+1}([x, d_2])= [Dg_{l, k+1}(x), Dg_{l, k+1}(d_2)]$ share $Dg_{l, k+1}(x)$. On the other hand, if $[w, d_3]$ and $[w, d_4]$ in $\mathcal{PI}(G_l)$ share $w$, then $[D^tg_{k, l+1}(\{w, d_3 \})]=[Dg_{k, l+1}(w), Dg_{k, l+1}(d_3)]$ and $[D^tg_{k, l+1}(\{w, d_4 \})]=[Dg_{k, l+1}(w), Dg_{k, l+1}(d_4)]$ share $Dg_{k, l+1}(w)$. Since $D^Cf_l$ is an isomorphism on $\mathcal{PI}(G_l)$, $D^Cg_{l, k+1}$ and $D^Cg_{k, l+1}$ act as inverses. So the preimages of $[w, d_3]$ and $[w, d_4]$ under $D^Cg_{l, k+1}$ share a vertex in $G_l$.\qedhere
\end{proof}

\vskip10pt

\noindent Lemmas \ref{L:irred} gives properties stemming from irreducibility (though not proving irreducibility):

\begin{lem}{\label{L:irred}} For each $1 \leq j \leq r$
~\\
\vspace{-6mm}
\begin{description}
\item[a.] there exists a $k$ such that either $e^u_k=E_{k,j}$ or $e^u_k= \overline{E_{k,j}}$ and \\[-5.5mm]
\item[b.] there exists a $k$ such that either $e^a_k=E_{k,j}$ or $e^a_k= \overline{E_{k,j}}$. \\[-5mm]
\end{description}
\end{lem}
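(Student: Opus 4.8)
The plan is to deduce both statements from the irreducibility of the transition matrix of $g$, using the rigidly prescribed shape of the folds in an ideal decomposition.

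First I would fix notation. Identify the petals of the rose $\Gamma$ with $E_1,\dots,E_r$ so that $\{e_1,\dots,e_{2r}\}=\{E_1,\overline{E_1},\dots,E_r,\overline{E_r}\}$ and the unoriented edge underlying $e_t$ carries index $\lceil t/2\rceil$; the statement concerns precisely these unoriented indices $j\in\{1,\dots,r\}$. Let $M=M(g)$ be the $r\times r$ transition matrix, $M_{st}$ the number of occurrences of $E_t^{\pm1}$ in the reduced edge path $g(E_s)$, and let $\Delta(M)$ be the directed graph on $\{1,\dots,r\}$ with an arrow $s\to t$ whenever $M_{st}>0$. Since $\phi$, hence $\phi^R$, is fully irreducible, $M$ is irreducible (equivalently $\Delta(M)$ is strongly connected): were $M$ reducible, a proper invariant block would span a proper $g$-invariant sub-rose of $\Gamma$, hence a $\phi^R$-invariant proper free factor. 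By Lemma \ref{L:PreLemma} no intermediate image $g_k\circ\cdots\circ g_1(E)$ has cancellation, so $M$ is the ordered product $M(g_1)M(g_2)\cdots M(g_n)$. Finally, since each $g_k$ is the proper full fold $e^{pu}_{k-1}=e_{k-1,j_k}\mapsto e^a_k e^u_k=e_{k,i_k}e_{k,j_k}$ fixing all other oriented edges, with $e^a_k\neq(e^u_k)^{\pm1}$, writing $\tilde\jmath_k=\lceil j_k/2\rceil\neq\tilde\imath_k=\lceil i_k/2\rceil$ for the unoriented indices, the matrix $M(g_k)$ is the identity with one extra $1$ in entry $(\tilde\jmath_k,\tilde\imath_k)$. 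Thus the row of $M(g_k)$ indexed by $\tilde\jmath_k$ is its only non-standard-basis row, and the column indexed by $\tilde\imath_k$ is its only non-standard-basis column.

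For part (a), fix $j_0$ and suppose toward a contradiction that $e^u_k\notin\{E_{k,j_0},\overline{E_{k,j_0}}\}$, i.e. $\tilde\jmath_k\neq j_0$, for every $k$. Then row $j_0$ of every factor $M(g_k)$ is the $j_0$th standard-basis row, so multiplying the factors of $M(g_1)\cdots M(g_n)$ one at a time leaves row $j_0$ a standard-basis row throughout; hence row $j_0$ of $M$ is standard, $g(E_{j_0})$ is a single edge $E_{j_0}^{\pm1}$, and in $\Delta(M)$ the only arrow out of $j_0$ is the loop at $j_0$. Then no vertex $t\neq j_0$ is reachable from $j_0$, contradicting strong connectivity of $\Delta(M)$. (Equivalently, $\langle E_{j_0}\rangle$ would be a $\phi^R$-invariant proper free factor.) Part (b) is the transposed argument: if $e^a_k\notin\{E_{k,j_0},\overline{E_{k,j_0}}\}$, i.e. $\tilde\imath_k\neq j_0$, for every $k$, then column $j_0$ of every $M(g_k)$ is standard, so the same telescoping shows column $j_0$ of $M$ is standard, meaning the only arrow into $j_0$ in $\Delta(M)$ is the loop at $j_0$; then $j_0$ is unreachable from every $t\neq j_0$, again contradicting strong connectivity.

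I do not expect a genuine obstacle here; the argument is essentially transparent matrix bookkeeping. The one place that needs (routine) care is the bookkeeping itself: translating between the oriented edge-indices $i_k,j_k$ of Notation \ref{N:IdealDecompositions} and the unoriented indices $j\in\{1,\dots,r\}$ of the statement, and using $\tilde\imath_k\neq\tilde\jmath_k$, which is exactly the defining condition $e^a_k\neq(e^u_k)^{\pm1}$ of the fold. If one prefers not to quote that a train track representative of a fully irreducible has irreducible transition matrix, the parenthetical free-factor remark gives the needed contradiction directly from full irreducibility. Finally, everything above applies verbatim with $g$ replaced by any $f_k$, since by Lemma \ref{L:fk} each $f_k$ carries an ideal decomposition obtained by cyclically reindexing the $g_i$.
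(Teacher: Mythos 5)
Your proof is correct and is essentially the paper's argument repackaged in transition matrix language: the paper's direct inductions showing $g(E_{0,j})=E_{0,j}$ (for (a)) and that $E_{0,j}^{\pm 1}$ never appears in any $g(E_{0,i})$ with $i\neq j$ (for (b)) are exactly your observations that row $j_0$, respectively column $j_0$, of the ordered product $M(g_1)\cdots M(g_n)$ stays a standard basis row/column. Both then derive reducibility of $g$ as the contradiction, your appeal to strong connectivity of $\Delta(M)$ and your parenthetical invariant-free-factor remark being two phrasings of the same conclusion.
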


\begin{proof} We start with (a). For contradiction's sake suppose there is some $j$ so that $e^u_k \neq E_{k,j}^{\pm 1}$ for all $k$. We inductively show $g(E_{0,j})=E_{0,j}$, implying $g$'s reducibility. Induction will be on the $k$ in $g_{k-1,1}$.

For the base case, we need $g_1(E_{0,j})=E_{1,j}$ if $e_1^u \neq E_{1,j}^{\pm 1}$.  $g_1$ is defined by $e^{pu}_0 \mapsto e^a_1 e^u_1$. Since $e_1^u \neq E_{1,j}$ and $\overline{e_1^u} \neq \overline{E_{(1,j)}}$, we know $e^{pu}_0 \neq E_{(0,j)}^{\pm 1}$. Thus, $g_1(E_{0,j})=E_{(1,j)}$, as desired.  Now inductively suppose $g_{k-1,1}(E_{0,j})=E_{k-1,j}$ and $e_k^u \neq E_{k,j}^{\pm 1}$. Then $e^{pu}_{k-1} \neq E_{k-1,j}^{\pm 1}$. Thus, since $e^{pu}_{k-1} \mapsto e^a_k e^u_k$ defines $g_k$, we know $g_k(E_{k-1,j})=E_{k,j}$. So $g_{k, 1}(E_{0,j})=g_k (g_{k-1,1}(E_{0,j}))=g_k (E_{k-1,j})= E_{k,j}$. Inductively, this proves $g(E_{0,j})=E_{0,j}$, we have our contradiction, and (b) is proved.

We now prove (b). For contradiction's sake, suppose that, for some $1 \leq j \leq r$, $e^a_k \neq E_{k,j}$ and $e^a_k \neq \overline{E_{k,j}}$ for each $k$. The goal will be to inductively show that, for each $E_{0,i}$ with $E_{0,i} \neq E_{0,j}$ and $E_{0,i} \neq \overline{E_{(0,j)}}$, $g(E_{0,i})$ does not contain $E_{0,j}$ and does not contain $\overline{E_{0,j}}$ (contradicting irreducibility).

We prove the base case. $g_1$ is defined by $e^{pu}_0 \mapsto e^a_1 e^u_1$. First suppose $E_{0,j}=(e^{pu}_0)^{\pm 1}$. Then $e^{pu}_0 \neq E_{0,i}^{\pm 1}$ (since $E_{0,i} \neq E_{0,j}^{\pm 1}$). So $g_1(E_{0,i})= E_{1,i}$, which does not contain $E_{1,j}^{\pm 1}$.  Now suppose that $E_{0,j} \neq e^{pu}_0$ and $E_{0,j} \neq \overline{e^{pu}_0}$. Then $e^a_1 e^u_1$ does not contain $E_{1,j}$ or $\overline{E_{1,j}}$ (since $e^a_k \neq (E_{k,j})^{\pm 1}$ by assumption). So $E_{1,j}^{\pm 1}$ are not in the image of $E_{0,i}$ if $E_{0,i} = e^{pu}_0$ (since the image of $E_{0,i}$ is then $e^a_1 e^u_1$) and are not in the image of $\overline {E_{0,i}}$ (since the image is $\overline{e^u_1} \overline{e^a_1}$) and are not in the image $E_{0,i}$ if $E_{0,i} \neq (e^{pu}_0)^{\pm 1}$ (since the image is $E_{1,i}$ and $E_{1,i} \neq E_{1,j}^{\pm 1}$). The base case is proved.

Inductively suppose $g_{k-1,1}(E_{0,i})$ does not contain $E_{k-1,j}^{\pm 1}$. Similar analysis as above shows $g_k(E_{k-1,i})$ does not contain $E_{k,j}^{\pm 1}$ for any $E_{k,i} \neq E_{k,j}^{\pm 1}$. Since $g_{k-1,1}(E_{k-1,i})$ does not contain $E_{k-1,j}^{\pm 1}$, $g_{k-1, 1}(E_{0,i})= e_1 \dots e_m$ with each $e_i \neq E_{k-1,j}^{\pm 1}$. Thus, no $g_k(e_i)$ contains $E_{k,j}^{\pm 1}$. So $g_{k, 1}(E_{0,i})= g_k(g_{k-1,1}(E_{0,i}))= g_k(e_1) \dots g_k(e_m)$ does not contain $E_{k,j}^{\pm 1}$. This completes the inductive step, thus (b). \qedhere
\end{proof}

\noindent \begin{rk} Lemma \ref{L:irred} is necessary, but not sufficient, for $g$ to be irreducible. For example, the composition of $a \mapsto ab$, $b \mapsto ba$, $c \mapsto cd$, and $d \mapsto dc$ satisfies Lemma \ref{L:irred}, but is reducible.
\end{rk}

\vskip8pt

\begin{proof}[Proof of Proposition \ref{P:am}]
AM property I follows from Proposition \ref{P:BC} and Lemma \ref{L:fk};
AM property II from Lemma \ref{L:IllegalTurn};
AM property III from Corollary \ref{C:UnachievedDirection};
AM property IV from Lemma \ref{L:EdgeImage};
AM property V from Lemma \ref{L:1Edge} and Corollary \ref{C:UnachievedDirection};
AM property VI from Lemma \ref{L:NG};
AM property VII from Lemma \ref{L:PurpleEdgeImages}; and
AM property VIII from Lemma \ref{L:irred}.
\qedhere
\end{proof}

\vskip10pt

\section{Lamination train track (ltt) structures}{\label{Ch:ltt}}

In Subection \ref{SS:Realltts} we defined ltt structures for ideally decomposed representatives with $(r;(\frac{3}{2}-r))$ potential. Both for defining $\mathcal{ID}$ diagrams and for applying the Birecurrency Condition, we need abstract definitions of ltt structures motivated by the AM properties of Section \ref{Ch:AMProperties}.

\subsection{Abstract lamination train track structures}

\begin{df} (See Example \ref{Ex:G(g)}) A \emph{lamination train track (ltt) structure $G$} is a pair-labeled colored train track graph (black edges will be included, but not considered colored) satisfying:
~\\
\vspace{-6mm}
\begin{description}
\item[ltt1:] Vertices are either purple or red. \\[-6mm]
\item[ltt2:] Edges are of 3 types ($\mathcal{E}_b$ comprises the black edges and $\mathcal{E}_c$ comprises the red and purple edges): \\[-6mm]
~\\
\vspace{-\baselineskip}
\indent \indent {\begin{description}
\item[(Black Edges):] A single black edge connects each pair of (edge-pair)-labeled vertices.  There are no other black edges. In particular, each vertex is contained in a unique black edge. \\[-5.7mm]
\item[(Red Edges):] A colored edge is red if and only if at least one of its endpoint vertices is red. \\[-5.7mm]
\item[(Purple Edges):] A colored edge is purple if and only if both endpoint vertices are purple. \\[-6mm]
    \end{description}}
\item[ltt3:] No pair of vertices is connected by two distinct colored edges. \\[-6mm]
    \end{description}

The purple subgraph of $G$ will be called the \emph{potential ideal Whitehead graph associated to $G$}, denoted \emph{$\mathcal{PI}(G)$}.  For a finite graph $\mathcal{G} \cong \mathcal{PI}(G)$, we say $G$ \emph{is an ltt Structure for $\mathcal{G}$}.

An \emph{$(r;(\frac{3}{2}-r))$ ltt structure} is an ltt structure $G$ for a $\mathcal{G} \in \mathcal{PI}_{(r;(\frac{3}{2}-r))}$ such that:
~\\
\vspace{-\baselineskip}
\begin{description}
\item[ltt(*)4:] $G$ has precisely 2r-1 purple vertices, a unique red vertex, and a unique red edge. \\[-6mm]
\end{description}

ltt structures are \emph{equivalent} that differ by an ornamentation-preserving (label and color preserving), homeomorphism.
\end{df}

\begin{nt}{\label{N:ltt}} \textbf{(ltt Structures)} For an ltt Structure $G$:
\begin{enumerate}[itemsep=-1.5pt,parsep=3pt,topsep=3pt]
\item An edge connecting a vertex pair $\{d_i, d_j \}$ will be denoted [$d_i, d_j$], with interior ($d_i, d_j$).  \newline
\indent (While the notation [$d_i, d_j$] may be ambiguous when there is more than one edge connecting the vertex pair $\{d_i, d_j \}$, we will be clear in such cases as to which edge we refer to.)
\item $[e_i]$ will denote [$d_i, \overline{d_i}$]
\item Red vertices and edges will be called \emph{nonperiodic}.
\item Purple vertices and edges will be called \emph{periodic}.
\item \emph{$\mathcal{C}(G)$} will denote the colored subgraph of $G$, called the \emph{colored subgraph associated to} (or \emph{of}) $G$.
\item $G$ will be called \emph{admissible} if it is birecurrent.
\end{enumerate}
\noindent For an $(r;(\frac{3}{2}-r))$ ltt structure $G$ for $\mathcal{G}$, additionally:
\begin{enumerate}[itemsep=-1.5pt,parsep=3pt,topsep=3pt]
\item $d^u$ will label the unique red vertex and be called the \emph{unachieved direction}.
\item $e^R=[t^R]$, will denote the unique red edge and $\overline{d^a}$ its purple vertex's label. So $t^R= \{d^u, \overline{d^a} \}$ and $e^R= [d^u, \overline{d^a}]$.
\item $\overline{d^a}$ is contained in a unique black edge, which we call the \emph{twice-achieved edge}.
\item $d^a$ will label the other twice-achieved edge vertex and be called the \emph{twice-achieved direction}.
\item If $G$ has a subscript, the subscript carries over to all relevant notation. For example, in $G_k$, $d^u_k$ will label the red vertex and $e^R_k$ the red edge.
\end{enumerate}
\end{nt}

A 2r-element set of the form $\{x_1, \overline{x_1}, \dots, x_r, \overline{x_r} \}$, with elements paired into \emph{edge pairs} $\{x_i, \overline{x_i}\}$, will be called a \emph{rank}-$r$ \emph{edge pair labeling set}. It will then be standard to say $\overline{\overline{x_i}}=x_i$. A graph with vertices labeled by an edge pair labeling set will be called a \emph{pair-labeled} graph. If an indexing is prescribed, it will be called an \emph{indexed pair-labeled} graph.

\begin{df}{\label{D:Indexedltt*}}
For an ltt structure to be considered \emph{indexed pair-labeled}, we require:
\begin{enumerate}[itemsep=-1.5pt,parsep=3pt,topsep=3pt]
\item It is index pair-labeled (of rank $r$) as a graph. \\[-6mm]
\item The vertices of the black edges are indexed by edge pairs. \\[-6mm]
\end{enumerate}
Index pair-labeled ltt structures are \emph{equivalent} that are equivalent as ltt structures via an equivalence preserves the indexing of the vertex labeling set.

By index pair-labeling (with rank $r$) an $(r;(\frac{3}{2}-r))$ ltt structure $G$ and edge-indexing the edges of an $r$-petaled rose $\Gamma$, one creates an identification of the vertices in $G$ with $\mathcal{D}(v)$, where $v$ is the vertex of $\Gamma$.  With this identification, we say $G$ is \emph{based} at $\Gamma$. In such a case it will be standard to use the notation $\{d_1, d_2, \dots, d_{2r-1}, d_{2r} \}$ for the vertex labels (instead of $\{x_1, x_2, \dots, x_{2r-1}, x_{2r} \}$). Additionally, $[e_i]$ will denote $[D_0(e_i), D_0(\overline{e_i})] = [d_i, \overline{d_i}]$ for each edge $e_i \in \mathcal{E}(\Gamma)$.

A $\mathcal{G} \in \mathcal{PI}_{(r;(\frac{3}{2}-r))}$ will be called \emph{(index) pair-labeled} if its vertices are labeled by a $2r-1$ element subset of the rank $r$ (indexed) edge pair labeling set.
\end{df}

\subsection{Maps of lamination train track structures}{\label{SS:BasedlttStructureMaps}}

\emph{\textbf{Let $G$ and $G'$ be rank-$r$ indexed pair-labeled $(r;(\frac{3}{2}-r))$ ltt structures, with bases $\Gamma$ and $\Gamma'$, and $g:\Gamma \to \Gamma'$ a tight homotopy equivalence taking edges to nondegenerate edge-paths.}}

Recall that $Dg$ induces a map of turns $D^tg: \{a,b\} \mapsto \{Dg(a), Dg(b)\}$. $Dg$ additionally induces a map on the corresponding edges of $\mathcal{C}(G)$ and $\mathcal{C}(G')$ if the appropriate edges exist in $\mathcal{C}(G')$:

\begin{df} When the map sending
\begin{enumerate}[itemsep=-1.5pt,parsep=3pt,topsep=3pt]
\item the vertex labeled $d$ in $G$ to that labeled by $Dg(d)$ in $G'$ and
\item the edge [$d_i, d_j$] in $\mathcal{C}(G)$ to the edge [$Dg(d_i), Dg(d_j)$] in $\mathcal{C}(G')$ \newline
\noindent also satisfies that
\item [3.] each $\mathcal{PI}(G)$ is mapped isomorphically onto $\mathcal{PI}(G')$,
\end{enumerate}
\noindent we call it the \emph{map of colored subgraphs induced by $g$} and denote it $D^C(g): C(G) \to C(G')$.

When it exists, the map $D^T(g): G \to G'$ \emph{induced by $g$} is the extension of $D^C(g): C(G) \to C(G')$ taking the interior of the black edge of $G$ corresponding to the edge $E \in \mathcal{E}(\Gamma)$ to the interior of the smooth path in $G'$ corresponding to $g(E)$.
\end{df}

\subsection{ltt structures are ltt structures}{\label{Ch:lttMeansltt}}

By showing that the ltt structures of Subsection \ref{SS:Realltts} are indeed abstract ltt structures, we can create a finite list of ltt structures for a particular $\mathcal{G} \in \mathcal{PI}_{(r;(\frac{3}{2}-r))}$ to apply the birecurrency condition to.

\begin{lem}{\label{L:PF}} Let $g:\Gamma \to \Gamma$ be a representative of $\phi \in Out(F_r)$, with $(r;(\frac{3}{2}-r))$ potential, such that $\mathcal{IW}(g) \cong \mathcal{G}$. Then $G(g)$ is an $(r;(\frac{3}{2}-r))$ ltt structure with base graph $\Gamma$. Furthermore, $\mathcal{PI}(G(g)) \cong \mathcal{G}$.
\end{lem}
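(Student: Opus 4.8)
The plan is to verify, clause by clause, that the concrete object $G(g)$ constructed in Subsection~\ref{SS:Realltts} satisfies every requirement in the definition of an $(r;(\frac{3}{2}-r))$ ltt structure based at $\Gamma$, and then read off $\mathcal{PI}(G(g))\cong\mathcal{G}$. All non-formal inputs come from the $f_k$-lemmas and the AM properties already established; these apply to $g$ because having $(r;(\frac{3}{2}-r))$ potential means $g$ is a pNp-free train track on the rose $\Gamma$ equipped with an ideal decomposition, with $g=f_n$ and $G(g)=G_n$ in the standard notation of \ref{N:IdealDecompositions}.

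\emph{Vertices and the purple subgraph.} By Lemma~\ref{L:fk}(b)--(d) (applied with $f_n=g$), $g$ is rotationless with exactly $2r-1$ fixed (hence periodic) directions at the rose vertex $v$ and exactly one nonfixed direction, which we write $d^u=d^u_n$. Since $|\mathcal{D}(v)|=2r$, the vertex set of $\mathcal{LW}(g)$, which is the vertex set of $G(g)$, consists of $2r-1$ purple vertices and the single red vertex $d^u$; this establishes \textbf{ltt1} and the vertex count in \textbf{ltt(*)4}. By construction $\mathcal{PI}(G(g))$ is the purple subgraph of $G(g)$, which is precisely $\mathcal{SW}(g)$ (the induced subgraph of $\mathcal{LW}(g)$ on the periodic vertices, colored purple when forming $\mathcal{CW}(g)$). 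As $g$ is pNp-free and on the rose, $\mathcal{SW}(g)\cong\mathcal{IW}(\phi)=\mathcal{IW}(g)\cong\mathcal{G}$; this proves the ``furthermore'' clause and shows $G(g)$ is an ltt structure \emph{for} $\mathcal{G}\in\mathcal{PI}_{(r;(\frac{3}{2}-r))}$.

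\emph{Edges.} The set $\Gamma_N=\Gamma-N(v)$ is a disjoint union of $r$ arcs, one per $E_i\in\mathcal{E}^+(\Gamma)$, each with two distinct endpoint labels $d_i,\overline{d_i}$; these arcs are exactly the black edges $[e_i]$ of $G(g)$, so each vertex lies in a unique black edge and no black edge is a loop, which is the Black Edges clause of \textbf{ltt2}. For colored edges, $\mathcal{LW}(g)$ is a simple graph recording only nondegenerate turns (the iterates $g^k(e)$ are reduced since $g$ is a train track), so colored edges join distinct vertices and no two colored edges share both endpoints, giving \textbf{ltt3} and the non-loop part of \textbf{tt2}. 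The coloring convention of $\mathcal{CW}(g)$ colors an edge of $\mathcal{LW}(g)$ purple exactly when both endpoints are periodic and red exactly when at least one endpoint is the nonperiodic vertex $d^u$, which is the Red/Purple Edges content of \textbf{ltt2}. Finally, every purple vertex is incident to a purple edge because $\mathcal{SW}(g)\cong\mathcal{IW}(\phi)$ is connected on $2r-1\ge 3$ vertices, and $d^u$ is incident to the red edge $[t^R]=[d^u,\overline{d^a}]$ by Corollary~\ref{C:UnachievedDirection}(b); together with the unique black edge at each vertex, this yields \textbf{tt1} and the remaining half of \textbf{tt3}. Then \textbf{ltt(*)4} is completed by noting that, by AM Property~V of Proposition~\ref{P:am} (equivalently Lemma~\ref{L:1Edge} with Corollary~\ref{C:UnachievedDirection}), $[t^R]$ is the only colored edge incident to $d^u$, and since $d^u$ is the sole red vertex a colored edge is red iff it contains $d^u$, so $[t^R]$ is the unique red edge. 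Lastly, the vertices of $G(g)$ are canonically identified with $\mathcal{D}(v)=\{d_1,\overline{d_1},\dots,d_r,\overline{d_r}\}$, a rank-$r$ edge pair labeling set whose edge pairs are exactly the endpoint pairs of the black edges, so $G(g)$ is (index) pair-labeled and, via this identification, based at $\Gamma$.

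\emph{Main obstacle.} Almost everything above is bookkeeping about the construction of Subsection~\ref{SS:Realltts}; the one step that genuinely uses the ideal-decomposition structure of $g$, rather than just that $g$ is a pNp-free train track on a rose, is the uniqueness of the red edge, which rests on Lemma~\ref{L:1Edge}/Corollary~\ref{C:UnachievedDirection}/AM Property~V. I expect that to be the conceptual crux. The ``no isolated vertices'' point I would take cheaply from connectivity of $\mathcal{IW}(\phi)$ together with Corollary~\ref{C:UnachievedDirection}(b), avoiding any transition-matrix argument.
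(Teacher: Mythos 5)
Your proof is correct and matches what the paper intends: the paper's own proof of Lemma~\ref{L:PF} is just a one-line pointer (``direct applications of the lemmas above,'' deferring details to \cite{p12a}), and you have carried out exactly those applications, citing Lemma~\ref{L:fk}(b)--(d) for the vertex count, the pNp-free-on-a-rose fact for $\mathcal{SW}(g)\cong\mathcal{IW}(\phi)\cong\mathcal{G}$, and Lemma~\ref{L:1Edge} together with Corollary~\ref{C:UnachievedDirection}(b) (equivalently AM~Property~V) for uniqueness of the red edge. You also correctly flag that last point as the only place the ideal-decomposition hypothesis, rather than mere pNp-free train-track-ness on a rose, is actually needed.
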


\begin{proof}  This is more or less just direct applications of the lemmas above. \cite{p12a} gives a detailed proof of a more general lemma. \qedhere
\end{proof}

\subsection{Generating triples}{\label{SS:GeneratingTriples}}

\noindent Since we deal with representatives decomposed into Nielsen generators, we use an abstract notion of an ``indexed generating triple.''

\begin{df}{\label{D:Triple}}  A \emph{triple} $(g_k, G_{k-1}, G_k)$ will be an ordered set of three objects where $g_k: \Gamma_{k-1} \to \Gamma_k$ is a proper full fold of roses and, for $i=k-1,k$, $G_i$ is an ltt structure with base $\Gamma_i$.
\end{df}

\begin{df}{\label{D:GeneratingTriple}}  A \emph{generating triple} is a triple $(g_k, G_{k-1}, G_k)$ where
~\\
\vspace{-\baselineskip}
\begin{description}
\item [(gtI)] $g_k: \Gamma_{k-1} \to \Gamma_k$ is a proper full fold of edge-indexed roses defined by
\begin{itemize}
\item [a.] $g_k(e_{k-1,j_k})= e_{k,i_k} e_{k,j_k}$  where $d^a_k=D_0(e_{k,i_k})$, $d^u_k=D_0(e_{k,j_k})$, and $e_{k,i_k} \neq (e_{k,j_k})^{\pm 1}$ and \\[-5mm]
\item [b.] $g_k(e_{k-1,t})= e_{k,t}$ for all $e_{k-1,t} \neq (e_{k,j_k})^{\pm 1}$;
\end{itemize}
\item [(gtII)] $G_i$ is an indexed pair-labeled $(r;(\frac{3}{2}-r))$ ltt structure with base $\Gamma_i$ for $i=k-1,k$; and
\item [(gtIII)] The induced map of based ltt structures $D^T(g_k): G_{k-1} \to G_k$ exists and, in particular, restricts to an isomorphism from $\mathcal{PI}(G_{k-1})$ to $\mathcal{PI}(G_k)$.
\end{description}
\end{df}

\vskip10pt

\begin{nt}{\label{N:GeneratingTriples}} \textbf{(Generating Triples)} For a generating triple $(g_k, G_{k-1}, G_k)$:
\begin{enumerate}[itemsep=-1.5pt,parsep=3pt,topsep=3pt]
\item We call $G_{k-1}$ the \emph{source ltt structure} and $G_k$ the \emph{destination ltt structure}. \\[-5mm]
\item{\label{N:IngoingGeneratorTerminology}}  $g_k$ will be called the \emph{(ingoing) generator} and will sometimes be written $g_k: e^{pu}_{k-1} \mapsto e^a_k e^u_k$ (``p'' is for ``pre'').  Thus, $d_{k-1,j_k}$ will sometimes be written $d^{pu}_{k-1}$. \\[-5mm]
\item $e^{pa}_{k-1}$ denotes $e_{k-1,i_k}$ (again ``p'' is for ``pre''). \\[-5mm]
\item If $G_k$ and $G_{k-1}$ are indexed pair-labeled $(r;(\frac{3}{2}-r))$ ltt structures for $\mathcal{G}$, then $(g_k, G_{k-1}, G_k)$ will be a generating triple \emph{for $\mathcal{G}$}.
\end{enumerate}
\end{nt}

\begin{rk}
While $d^u_i$ is determined by the red vertex of $G_i$ (and does not rely on other information in the triple), $d^{pu}_{k-1}$ and $d^{pa}_{k-1}$ actually rely on gtI, and cannot be determined by knowing only $G_{k-1}$.
\end{rk}

\begin{ex}{\label{E:GeneratingTripleEx}} The triple $(g_2, G_1, G_2)$ of Example \ref{Ex:InducedMap} is an example of a generating triple where $x$ denotes both $E_{(1,1)}$ and $E_{(2,1)}$, $y$ denotes both $E_{(1,2)}$ and $E_{(2,2)}$, and $z$ denotes both $E_{(1,3)}$ and $E_{(2,3)}$.
\end{ex}

\begin{df}{\label{D:GeneratorExtendsTolttStructures}}
Suppose $(g_i, G_{i-1}, G_i)$ and $(g_i', G_{i-1}', G_i)'$ are generating triples. Let $g_i^T: G_{i-1} \to G_i$ be induced by  $g_i: \Gamma_{i-1} \to \Gamma_i$ and $g_i^T: G_{i-1}' \to G_i'$ by $g_i: \Gamma_{i-1}' \to \Gamma_i'$. We say $(g_i, G_{i-1}, G_i)$ and $(g_i', G_{i-1}', G_i')$ are \emph{equivalent} if there exist indexed pair-labeled graph equivalences $H_{i-1}: \Gamma_{i-1} \to \Gamma_{i-1}'$ and $H_i: \Gamma_i \to \Gamma_i'$ such that:
\begin{enumerate}[itemsep=-1.5pt,parsep=3pt,topsep=3pt]
\item for $k=i,i-1$, $H_i: \Gamma_i \to \Gamma_i'$ induces indexed pair-labeled ltt structure equivalence of $G_i$ and $G_i'$
\item and $H_i \circ g_i = g_i' \circ H_{i-1}$.
\end{enumerate}
\end{df}

\section{Peels, extensions, and switches}{\label{Ch:Peels}}

\noindent Suppose $\mathcal{G} \in \mathcal{PI}_{(r;(\frac{3}{2}-r))}$. By Section \ref{Ch:IdealDecompositions}, if there is a $\phi \in \mathcal{AFI}_r$ with $IW(\phi) \cong \mathcal{G}$, then there is an ideally decomposed $(r;(\frac{3}{2}-r))$-potential representative $g$ of a power of $\phi$. By Section \ref{Ch:AMProperties}, such a representative would satisfy the AM properties. Thus, if we can show that a representative satisfying the properties does not exist, we have shown there is no $\phi \in \mathcal{AFI}_r$ with $IW(\phi) \cong \mathcal{G}$ (we use this fact in Section \ref{Ch:UnachievableGraphs}). In this section we show what triples $(g_k, G_{k-1}, G_k)$ satisfying the AM properties must look like. We prove in Proposition \ref{P:ExtensionsSwitches} that, if the structure $G_k$ and a purple edge $[d, d^a_k]$ in $G_k$ are set, then there is only one $g_k$ possibility and at most two $G_{k-1}$ possibilities (one generating triple possibility will be called a ``switch'' and the other an ``extension''). Extensions and switches are used here only to define ideal decomposition diagrams but have interesting properties used (and proved) in \cite{p12c} and \cite{p12d}.

\subsection{Peels}{\label{S:Peels}}

\noindent As a warm-up, we describe a geometric method for visualizing ``switches'' and ``extensions'' as moves, ``peels,'' transforming an ltt structure $G_i$ into an ltt structure $G_{i-1}$.

Each peel of an ltt structure $G_i$ involves three directed edges of $G_i$:
~\\
\vspace{-\baselineskip}
\begin{itemize}
\item The \emph{First Edge of the Peel} (\emph{New Red Edge} in $G_i$): the red edge from $d^u_i$ to $\overline{d^a_i}$. \\[-6mm]
\item The \emph{Second Edge of the Peel} (\emph{Twice-Achieved Edge} in $G_i$): the black edge from $\overline{d^a_i}$ to $d^a_i$. \\[-6mm]
\item The \emph{Third Edge of the Peel} (\emph{Determining Edge} for the peel): a purple edge from $d^a_i$ to $d$.  (In $G_{i-1}$, this vertex $d$ will be the red edge's attaching vertex, labeled $\overline{d^a_{i-1}}$). \\[-6mm]
\end{itemize}
\begin{figure}[H]
\centering
\includegraphics[width=2.8in]{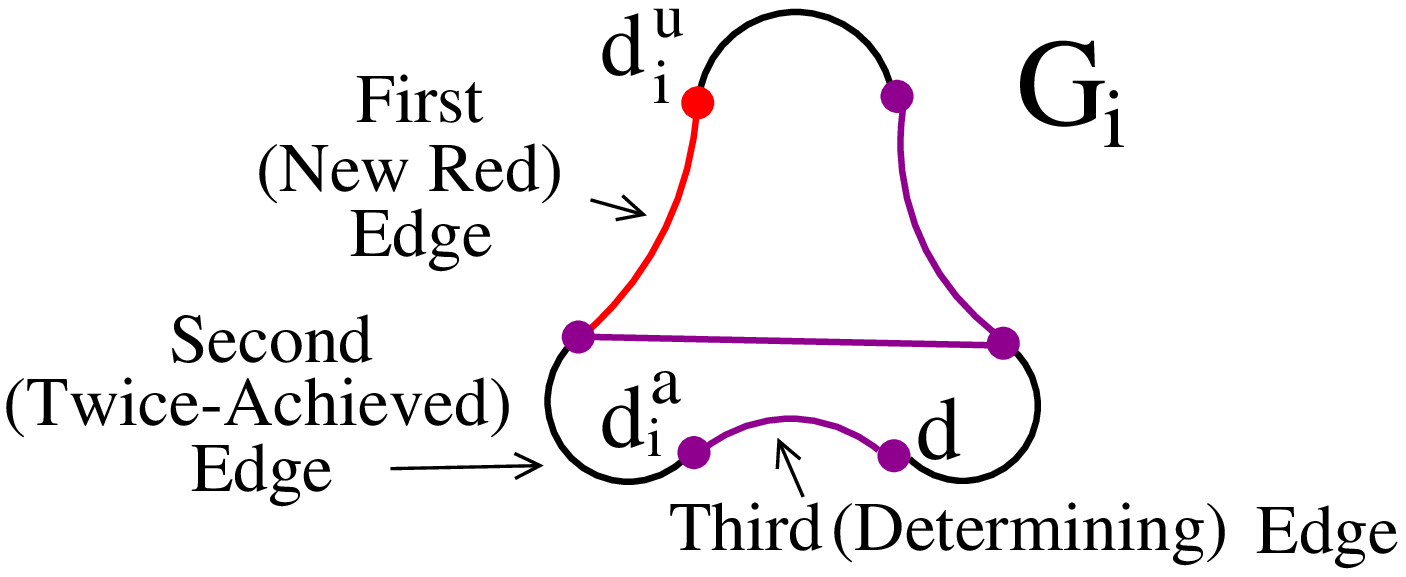}
\end{figure}

For each determining edge choice $[d^a_i, d]$ in $G_i$, there is one ``peel switch'' (Figure 8) and one ``peel extension'' (Figure 7). When $G_i$ has only a single purple edge at $d^a_i$, the switch and extension differ by a color switch of two edges and two vertices. We start by explaining this case. After, we explain the preliminary step necessary for any switch where more than one purple edge in $G_i$ contains $d^a_i$.

We describe how, when $G_i$ has only a single purple edge at $d^a_i$, the two peels determined by $[d^a_i, d]$ transform $G_i$ into $G_{i-1}$. While keeping $d$ fixed, starting at vertex $\overline{d^a_i}$, peel off black edge $[\overline{d^a_i}, d^a_i]$ and the third edge $[d^a_i, d]$, leaving copies of $[\overline{d^a_i}, d^a_i]$ and $[d^a_i, d]$ and creating a new edge $[d^u_i, d]$ from the concatenation of the peel's first, second, and third edges (Figure 7 or 8).

In a \emph{peel extension}: $[d^u_i, \overline{d^a_i}]$ disappears into the concatenation and does not exist in $G_{i-1}$, the copy of $[\overline{d^a_i}, d^a_i]$ left behind stays black in $G_{i-1}$, the copy of $[d^a_i, d]$ left behind stays purple in $G_{i-1}$, the edge $[d^u_i, d]$ formed from the concatenation is red in $G_{i-1}$, and nothing else changes from $G_i$ to $G_{i-1}$ (if one ignores the first indices of the vertex labels). The triple $(g_i, G_{i-1}, G_i)$, with $g_i$ as in AM property VI, will be called the \emph{extension determined by $[d^a_i, d]$}.
~\\
\vspace{-6mm}
\begin{figure}[H]{\label{fig:PeelExtension}}
\centering
\includegraphics[width=2.6in]{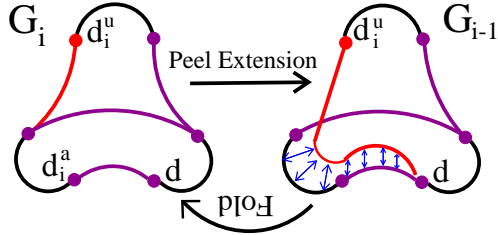}
\hspace{1.8in}\parbox{6in}{\caption{{\small{\emph{Peel Extension: Note that the first, second, and third edges of the peel concatenate to form the red edge $[d^u_i, d]$ in $G_{i-1}$ and that copies of $[\overline{d^a_i}, d^a_i]$ and $[d^a_i, d]$ remain in $G_{i-1}$.}}}}} \\[-1.5mm]
\end{figure}

\indent In a \emph{peel switch} (where $[d^a_i, d]$ was the only purple edge in $G_i$ containing $d^a_i$): Again $[d^u_i, \overline{d^a_i}]$ has disappeared into the concatenation and the copy of $[\overline{d^a_i}, d^a_i]$ left behind stays black in $G_{i-1}$. But now the edge $[d^u_i, d]$ formed from the concatenation is purple in $G_{i-1}$, the copy of $[d^a_i, d]$ left behind and vertex $d^a_i$ are both red in $G_{i-1}$ (so that $d^a_i$ is now actually $d^u_{i-1}$), and vertex $d^u_i$ is purple in $G_{i-1}$. The triple $(g_i, G_{i-1}, G_i)$, with $g_i$ as in AM property VI, will be called the \emph{switch determined by $[d^a_i, d]$}.
~\\
\vspace{-6mm}
\begin{figure}[H]{\label{fig:PeelSwitch}}
\centering
\includegraphics[width=2.4in]{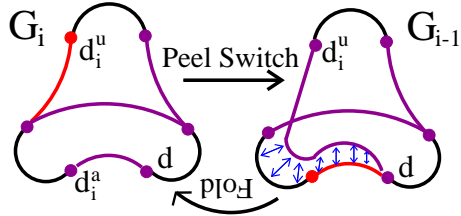}\
\hspace{1.8in}\parbox{5.5in}{\caption{{\small{\emph{Peel Switch (when $d^a_i$ only belongs to one purple edge in $G_{i-1}$): The first, second, and third edges of the peel concatenate to form a purple edge $[d^u_i, d]$ in $G_{i-1}$.  The determining edge $[d^a_i, d]$ is the red edge of $G_{i-1}$, with red vertex $d^a_i$.}}}}} \\[-1.5mm]
\end{figure}

\indent Preliminary step for a switch where purple edges other than the determining edge $[d^a_i, d]$ contain vertex $d^a_i$ in $G_i$: For each purple edge $[d^a_i, d']$ in $G_i$ where $d \neq d'$, form a purple concatenated edge $[d', d^u_i]$ in $G_{i-1}$ by concatenating $[d', d^a_i]$ with a copy of $[d^a_i, \overline{d^a_i}, d^u_i]$, created by splitting open, as in Figure 9, $[d^a_i, \overline{d^a_i}]$ from $d^a_i$ to $\overline{d^a_i}$ and $[\overline{d^a_i}, d^u_i]$ from $\overline{d^a_i}$ to $d^u_i$.
~\\
\vspace{-6mm}
\begin{figure}[H]
\centering
\includegraphics[width=4.1in]{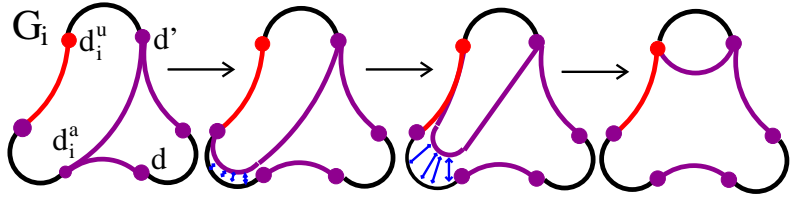}
\hspace{1.5in}\parbox{6in}{\caption{\small{\emph{Peel Switch Preliminary Step: For each purple edge $[d^a_i, d']$ in $G_i$, the peeler peels a copy of $[d^a_i, \overline{d^a_i}, d^u_i]$ off to concatenate with $[d^a_i, d']$ and form the purple edge $[d^u_i, d']$.}}}}
\label{fig:PeelSwitch2} \\[-1.5mm]
\end{figure}

\indent To check the peel switch was performed correctly, one can: remove $G_i$'s red edge, lift vertex $d^a_i$ (with purple edges containing it dangling from one's fingers), and drop vertex $d^a_i$ in the spot of vertex $d^u_i$, while leaving behind a copy of $[d^a_i, d]$ to become the new red edge of $G_{i-1}$ (with $d^{pa}_{i-1}$ as the red vertex).

\subsection{Extensions and switches}

\emph{\textbf{Throughout this section $G_k$ will be an indexed pair-labeled $(r;(\frac{3}{2}-r))$ ltt structure for a $\mathcal{G} \in \mathcal{PI}_{(r;(\frac{3}{2}-r))}$ with rose base graph $\Gamma_k$. We use the standard notation.}}

We define extensions and switches ``entering'' an indexed pair-labeled admissible $(r;(\frac{3}{2}-r))$ ltt structure $G_k$ for $\mathcal{G}$. However, we first prove that determining edges exist.

\vskip10pt

\begin{lem} There exists a purple edge with vertex $d^a_k$, so that it may be written $[d^a_k, d_{k,l}]$.
\end{lem}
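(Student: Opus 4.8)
The plan is to deduce the claim from the train-track-graph axioms together with \textbf{ltt(*)4}, purely combinatorially. First I would record that the vertex $d^a_k$ is purple. By \textbf{ltt(*)4}, $G_k$ has a unique red vertex, namely $d^u_k$, and a unique red edge $e^R_k=[d^u_k,\overline{d^a_k}]$. Since red edges are colored (and hence never black), while the black edge through $d^u_k$ is $[d^u_k,\overline{d^u_k}]=[e^u_k]$, we cannot have $d^a_k=d^u_k$ (that would make $e^R_k$ the black edge $[e^u_k]$); as $d^u_k$ is the only red vertex, $d^a_k$ must be purple. I would also note that $d^a_k\neq\overline{d^a_k}$, since the two members of an edge pair are always distinct elements of the edge-pair labeling set (equivalently, by \textbf{tt2}, the twice-achieved black edge $[d^a_k,\overline{d^a_k}]$ has two distinct vertices).

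Next, $G_k$ is in particular a train track graph, so axiom \textbf{tt3} guarantees that every vertex is incident to at least one colored edge; in particular there is a colored edge $[d^a_k,d']$ of $G_k$ for some vertex $d'$.

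It then remains to show $[d^a_k,d']$ is purple, which I would do by contradiction. If $[d^a_k,d']$ were red, then by the Red Edges clause of \textbf{ltt2} one of its endpoints is red; since $d^a_k$ is purple, $d'$ would have to be the unique red vertex, i.e. $d'=d^u_k$, so the edge is $[d^a_k,d^u_k]$. But $[d^a_k,d^u_k]$ and the red edge $e^R_k=[d^u_k,\overline{d^a_k}]$ join the distinct vertex pairs $\{d^a_k,d^u_k\}$ and $\{d^u_k,\overline{d^a_k}\}$ (distinct because $d^a_k\neq\overline{d^a_k}$), so $[d^a_k,d^u_k]$ would be a second red edge, contradicting the uniqueness asserted in \textbf{ltt(*)4}. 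Hence $[d^a_k,d']$ is purple; writing $d'=d_{k,l}$, this is the desired edge $[d^a_k,d_{k,l}]$.

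The whole argument is elementary, and I do not expect a genuine obstacle; the one thing to handle carefully is the small case analysis ruling out coincidences among $d^a_k$, $\overline{d^a_k}$, and $d^u_k$, which is why I isolate at the outset the facts that a red edge is colored and that edge-pair mates are distinct. Note that admissibility (birecurrency) of $G_k$ is not needed for this lemma.
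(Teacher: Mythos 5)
Your proof follows essentially the same route as the paper's: rule out $d^a_k$ being red, produce a colored edge incident to $d^a_k$, and use the uniqueness of the red edge (ltt(*)4) to conclude that edge must be purple. Your version is a bit more explicit in places — you invoke \textbf{tt3} for existence of a colored edge at $d^a_k$ (the paper instead says $\mathcal{G}$ ``would not have $2r-1$ vertices''), and you spell out that $d^a_k\neq\overline{d^a_k}$ so the putative second red edge $[d^a_k,d^u_k]$ is genuinely distinct from $e^R_k$. One small wrinkle: your justification for $d^a_k\neq d^u_k$ (``that would make $e^R_k$ the black edge $[e^u_k]$'') is not literally right — $e^R_k=[d^u_k,\overline{d^u_k}]$ would be a colored edge with the same endpoints as the black edge $[e^u_k]$, not the black edge itself, and the ltt axioms only forbid two \emph{colored} edges on the same vertex pair (ltt3). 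The paper's own one-line dismissal of this case (``violating that $\mathcal{G}\in\mathcal{PI}_{(r;(\frac{3}{2}-r))}$'') is equally terse; if you want an airtight version, it is cleaner to argue that if $\overline{d^a_k}=\overline{d^u_k}$ then by ltt3 the only colored edge at $\overline{d^a_k}$ meeting $d^u_k$ is $e^R_k$, so $\overline{d^a_k}$ has no purple edge (all its other possible colored partners would give purple edges, but there need be none), and then fall back on connectivity of $\mathcal{G}$ — but this is a refinement rather than a different proof. In all respects that matter, your argument and the paper's are the same.
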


\begin{proof} If $d^a_k$ were red, the $e^R_k$ would be $[d^a_k, \overline{d^a_k}]$, violating that $\mathcal{G} \in \mathcal{PI}_{(r;(\frac{3}{2}-r))}$. $d^a_k$ must be contained in an edge $[d^a_k, d_{k,l}]$ or $\mathcal{G}$ would not have 2r-1 vertices. If $d_{k,l}$ were red, i.e. $d_{k,l}=d^u_k$, then both $[d^u_k, \overline{d^a_k}]$ and $[d^u_k, d^a_k]$ would be red, violating [ltt(*)4]. So $[d^a_k, d_{k,l}]$ must be purple.  \qedhere
\end{proof}

\begin{df}{\label{D:Extension}} (See Figure  \ref{fig:ExtensionDiagram}) For a purple edge $[d^a_k, d_{k,l}]$ in $G_k$, the \emph{extension determined by} $[d^a_k, d_{k,l}]$, is the generating triple $(g_k, G_{k-1}, G_k)$ for $\mathcal{G}$ satisfying:
~\\
\vspace{-\baselineskip}
\begin{description}
\item[(extI):] The restriction of $D^T(g_k)$ to $\mathcal{PI}(G_{k-1})$ is defined by sending, for each $j$, the vertex labeled $d_{k-1,j}$ to the vertex labeled $d_{k,j}$ and extending linearly over edges. \\[-5mm]
\item[(extII):] $d^u_{k-1}= d^{pu}_{k-1}$, i.e. $d^{pu}_{k-1}= d_{k-1,j_k}$ labels the single red vertex in $G_{k-1}$. \\[-5mm]
\item[(extIII):] $\overline{d^a_{k-1}}= d_{k-1,l}$.
\end{description}
\end{df}

\begin{rk}
(extIII) implies that the single red edge $e^{R}_{k-1}= [d^u_{k-1}, \overline{d^{a}_{k-1}}]$ of $G_{k-1}$ can be written, among other ways, as $[d^{pu}_{k-1}, d_{(k-1,l)}]$.
\end{rk}

\vskip10pt

\noindent Explained in Section \ref{S:Peels}, but with this section's notation, an extension transforms ltt structures as:
~\\
\vspace{-6.5mm}
\begin{figure}[H]
\centering
\includegraphics[width=4.3in]{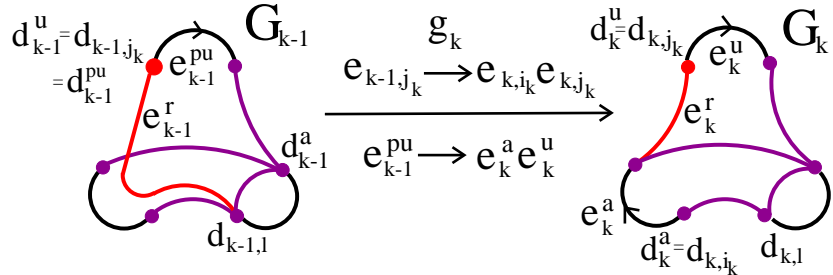}
~\\
\vspace{-5mm}
\caption[Extension]{{\small{\emph{Extension}}}}
\label{fig:ExtensionDiagram}
\end{figure}

\begin{lem}{\label{L:ExtensionUniqueness}} Given an edge $[d^a_k, d_{k,l}]$ in $\mathcal{PI}(G_k)$, the extension $(g_k, G_{k-1}, G_k)$ determined by $[d^a_k, d_{k,l}]$ is unique.
~\\
\vspace{-6.5mm}
\begin{description}
\item [I.] $G_{k-1}$ can be obtained from $G_k$ by the following steps:
~\\
\vspace{-6.5mm}
{\begin{enumerate}[itemsep=-1.5pt,parsep=3pt,topsep=3pt]
\item removing the interior of the red edge from $G_k$; \\[-5.5mm]
\item replacing each vertex label $d_{k,i}$ with $d_{k-1,i}$ and each vertex label $\overline{d_{k,i}}$ with $\overline{d_{k-1,i}}$; and \\[-5.5mm]
\item adding a red edge $e^R_{k-1}$ connecting the red vertex to $d_{k-1,l}$. \\[-5.5mm]
\end{enumerate}}
\item [II.] The fold is such that the corresponding homotopy equivalence maps the oriented $e_{k-1,j_k} \in \mathcal{E}_{k-1}$ over the path $e_{k,i_k} e_{k,j_k}$ in $\Gamma_k$ and then each oriented $e_{k-1,t} \in \mathcal{E}_{k-1}$ with $e_{k-1,t} \neq e_{k-1,j_k}^{\pm 1}$ over $e_{k,t}$.
\end{description}
\end{lem}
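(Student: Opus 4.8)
The plan is to show that, once $G_k$ and the determining edge $[d^a_k,d_{k,l}]$ are fixed, each of the three pieces of data in a generating triple $(g_k,G_{k-1},G_k)$ satisfying (extI)--(extIII) is forced, and that the forced triple is exactly the one described in I and II; this proves both uniqueness and the explicit description. Existence then follows at the end by checking the described triple obeys the axioms.

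First I would pin down $g_k$, which gives II and is essentially immediate. By ltt(*)4, $G_k$ has a unique red vertex, namely $d^u_k$, and a unique red edge $e^R_k=[d^u_k,\overline{d^a_k}]$; the twice-achieved direction $d^a_k$ is recovered as the label of the other endpoint of the (black) twice-achieved edge at the purple vertex $\overline{d^a_k}$. Thus $G_k$ determines $d^u_k$ and $d^a_k$, hence the indices $j_k$ and $i_k$ with $D_0(e_{k,j_k})=d^u_k$ and $D_0(e_{k,i_k})=d^a_k$ (distinct, since $e_{k,i_k}\neq e_{k,j_k}^{\pm1}$ by (gtI)). By (gtI), $g_k$ must then be the proper full fold $e_{k-1,j_k}\mapsto e_{k,i_k}e_{k,j_k}$, $e_{k-1,t}\mapsto e_{k,t}$ for $e_{k-1,t}\neq e_{k-1,j_k}^{\pm1}$, which is the homotopy equivalence of II. I would record the consequence: $Dg_k(d_{k-1,t})=d_{k,t}$ for every $t\neq j_k$ (using $g_k(\overline{e_{k-1,j_k}})=\overline{e_{k,j_k}}\,\overline{e_{k,i_k}}$ for the index of $\overline{e_{k-1,j_k}}$), while $Dg_k(d_{k-1,j_k})=d^a_k$; so $g_k$ moves only the edge pair of $e_{k-1,j_k}$ off-index.

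Next I would determine $G_{k-1}$, giving I. Being based at the rose $\Gamma_{k-1}$, it has vertex set $\{d_{k-1,1},\dots,d_{k-1,2r}\}$ and black edges $[e_{k-1,i}]$; by (extII) its unique red vertex is $d_{k-1,j_k}$, so the other $2r-1$ vertices are purple. For the purple edges I would combine the map $Dg_k$ above with (gtIII) and (extI): $Dg_k$ restricts to the index-preserving bijection from $\{d_{k-1,i}:i\neq j_k\}$ onto the purple vertex set $\{d_{k,i}:i\neq j_k\}$ of $G_k$, and $D^C(g_k)$ carries $\mathcal{PI}(G_{k-1})$ isomorphically onto $\mathcal{PI}(G_k)$ by $[d_{k-1,i},d_{k-1,j}]\mapsto[d_{k,i},d_{k,j}]$; hence $[d_{k-1,i},d_{k-1,j}]\in\mathcal{PI}(G_{k-1})$ exactly when $[d_{k,i},d_{k,j}]\in\mathcal{PI}(G_k)$. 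Finally, as an $(r;(\frac{3}{2}-r))$ ltt structure $G_{k-1}$ has a unique red edge $e^R_{k-1}=[d^u_{k-1},\overline{d^a_{k-1}}]$, which by (extII) and (extIII) equals $[d_{k-1,j_k},d_{k-1,l}]$; and since a colored edge is red iff it meets the only red vertex $d_{k-1,j_k}$ and there is just one red edge, every other colored edge of $G_{k-1}$ is purple and so already listed. Thus the black edges, the purple subgraph, and the red edge of $G_{k-1}$ are each uniquely prescribed, and that prescription is precisely: delete the interior of $e^R_k$ from $G_k$; relabel $d_{k,i}$ as $d_{k-1,i}$ and $\overline{d_{k,i}}$ as $\overline{d_{k-1,i}}$ (which transports the black edges and $\mathcal{PI}(G_k)$ over, leaving the relabel of $d^u_k$, namely $d_{k-1,j_k}$, red); and attach the new red edge $[d_{k-1,j_k},d_{k-1,l}]$. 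This is exactly I, so the triple is unique.

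To close the loop I would verify that this described triple really is a generating triple satisfying (extI)--(extIII): $g_k$ is a proper full fold of roses by construction; the relabelled-plus-new-red-edge graph satisfies ltt1--ltt3 and ltt(*)4, where one uses that $d^a_k$ is purple in $G_k$ (so its relabel is a purple vertex of $G_{k-1}$) and that $[d^a_k,d_{k,l}]$ being purple forces $d_{k,l}\notin\{d^u_k,\overline{d^a_k},d^a_k\}$, so the new red edge joins two distinct vertices, one purple and one red; and $D^T(g_k)$ exists because the only turn crossed by $g_k(e_{k-1,j_k})$ that is not a black-edge turn is $\{\overline{d^a_k},d^u_k\}=t^R_k$, which is realized by the red edge $e^R_k$ of $G_k$. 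I expect the main obstacle to be purely bookkeeping — keeping $d_i$ straight from $\overline{d_i}$ throughout the relabelling and confirming that the colors of the three peel edges land exactly where claimed; there is no conceptual difficulty once $g_k$ is seen to be determined by $G_k$ alone.
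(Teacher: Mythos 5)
The paper's own ``proof'' of this lemma is only the sentence that it is an unraveling of definitions, with details delegated to \cite{p12a}, and your proposal supplies exactly that unraveling in the intended way: recover $g_k$ from $G_k$ alone (via $d^u_k$, $d^a_k$, and (gtI)); then use (extI)--(extIII) together with the ltt axioms to force the vertex relabelling, the purple subgraph, the black edges, and the unique red edge of $G_{k-1}$; then check that the forced triple satisfies (gtI)--(gtIII) and (extI)--(extIII), so the construction in I--II both exists and is the only possibility. So the approach matches the paper's, and the argument is correct.

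Two small points worth tightening. First, you assert that $[d^a_k,d_{k,l}]$ being purple forces $d_{k,l}\notin\{d^u_k,\overline{d^a_k},d^a_k\}$, but the exclusion $d_{k,l}\neq\overline{d^a_k}$ does not follow: nothing in the ltt axioms (nor simpliciality of $\mathcal{G}$, which rules out multi-edges and loops, not edges between a black-edge pair) forbids a purple edge $[d^a_k,\overline{d^a_k}]$. Fortunately you only use $d_{k,l}\neq d^u_k$, which does follow since $d_{k,l}$ is a purple endpoint of a purple edge, so the conclusion that the new red edge has one purple and one red endpoint, distinct, is intact. Second, your verification asserts ltt3 for the reconstructed $G_{k-1}$ without spelling out why the newly added red edge $[d_{k-1,j_k},d_{k-1,l}]$ cannot be parallel to a surviving colored edge: this follows from ltt(*)4 applied to $G_k$, since $d^u_k$ is its unique red vertex and $e^R_k$ its unique red edge, hence $e^R_k$ was the only colored edge at $d^u_k$, and you removed it in step 1. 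That one-line justification would make the verification airtight.
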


\begin{proof}  The proof is an unraveling of definitions. A full presentation can be found in \cite{p12a}.  \qedhere
\end{proof}

\vskip15pt

\begin{df}{\label{D:Switch}} (See Figure \ref{fig:SwitchDiagram}) The \emph{switch} determined by a purple edge $[d^a_k, d_{(k,l)}]$ in $G_k$ is the generating triple $(g_k, G_{k-1}, G_k)$ for $\mathcal{G}$ satisfying:
~\\
\vspace{-\baselineskip}
\indent \begin{description}
\item[(swI):] $D^T(g_k)$ restricts to an isomorphism from $\mathcal{PI}(G_{k-1})$ to $\mathcal{PI}(G_k)$ defined by
    $$\mathcal{PI}(G_{k-1}) \xrightarrow{d^{pu}_{k-1} \mapsto d^a_k=d_{k, i_k}} \mathcal{PI}(G_k)$$
    ($d_{k-1,t} \mapsto d_{k,t}$ for $d_{k-1,t} \neq d^{pu}_{k-1}$) and extended linearly over edges. \\[-5mm]
\item[(swII):] $d^{pa}_{k-1} = d^u_{k-1}$. \\[-5mm]
\item[(swIII):] $\overline{d^a_{k-1}} = d_{k-1,l}$.
\end{description}
\end{df}

\vskip10pt

\begin{rk} (swII) implies that the red edge $e^R_{k-1} = [d^u_{k-1}, d^a_{k-1}]$ of $G_{k-1}$ can be written $[d^{pa}_{k-1}, \overline{d^a_{k-1}}]$, among other ways. (swIII) implies that $e^R_{k-1}$ can be written $[d_{(k-1,i_k)}, d_{(k-1,l)}]$.
\end{rk}

\vskip10pt

\noindent Explained in Section \ref{S:Peels}, but with this section's notation, a switch transforms ltt structures as follows:
~\\
\vspace{-7mm}
\begin{figure}[H]
\centering
\includegraphics[width=4.3in]{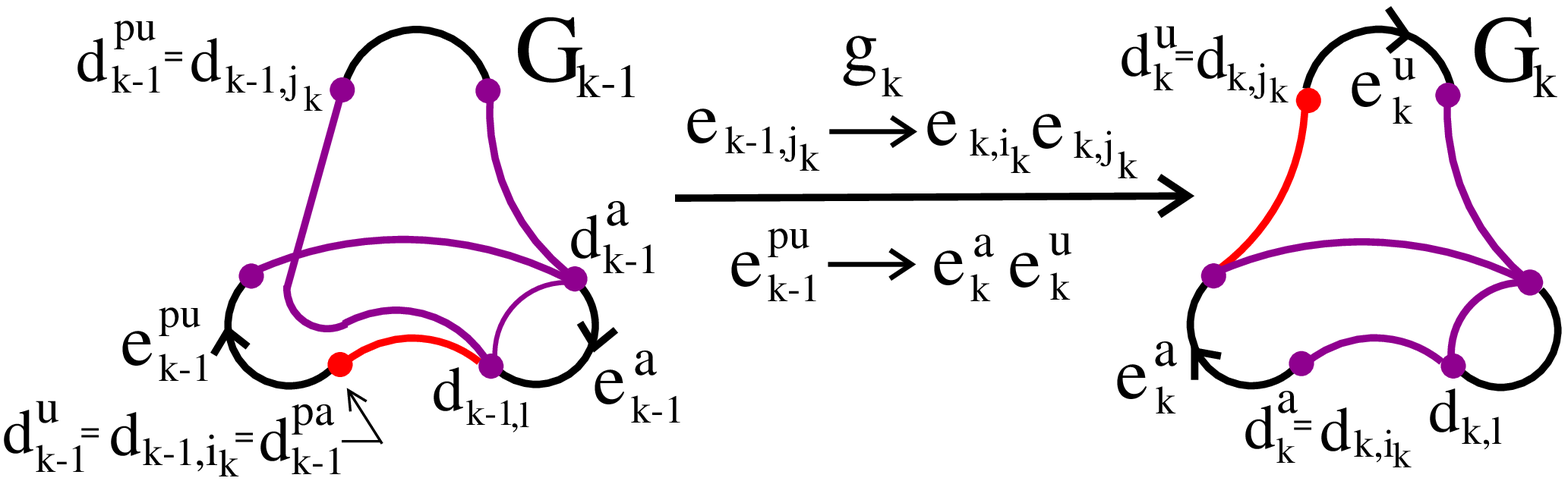}
~\\
\vspace{-5.5mm}
\caption[Switch]{{\small{Switch}}}
\label{fig:SwitchDiagram}
\end{figure}

\begin{lem}{\label{L:SwitchUniqueness}} Given an edge $[d^a_k, d_{k,l}]$ in $\mathcal{PI}(G_k)$, the switch $(g_k, G_{k-1}, G_k)$ determined by $[d^a_k, d_{k,l}]$ is unique.
~\\
\vspace{-\baselineskip}
\begin{description}
\item [I.] $G_{k-1}$ can be obtained from $G_k$ by the following steps:
~\\
\vspace{-6.5mm}
{\begin{enumerate}[itemsep=-1.5pt,parsep=3pt,topsep=3pt]
\item Start with $\mathcal{PI}(G_k)$. \\[-5.4mm]
\item Replace each vertex label $d_{k,i}$ with $d_{k-1,i}$. \\[-5.4mm]
\item Switch the attaching (purple) vertex of the red edge to be $d_{k-1,l}$. \\[-5.4mm]
\item Switch the labels $d_{(k-1,j_k)}$ and $d_{(k-1,i_k)}$, so that the red vertex of $G_{k-1}$ will be $d_{k-1,i_k}$ and the red edge of $G_{k-1}$ will be $[d_{(k-1,i_k)}, d_{(k-1,l)}]$. \\[-5.4mm]
\item Include black edges connecting inverse pair labeled vertices (there is a black edge $[d_{(k-1,i)}, d_{(k-1,j)}]$ in $G_{k-1}$ if and only if there is a black edge $[d_{(k,i)}, d_{(k,j)}]$ in $G_k$). \\[-5.5mm]
\end{enumerate}}
\item [II.] The fold is such that the corresponding homotopy equivalence maps the oriented $e_{k-1,j_k} \in \mathcal{E}_{k-1}$ over the path $e_{k,i_k} e_{k,j_k}$ in $\Gamma_k$ and then each oriented $e_{k-1,t} \in \mathcal{E}_{k-1}$ with $e_{k-1,t} \neq e_{k-1,j_k}^{\pm 1}$ over $e_{k,t}$.
\end{description}
\end{lem}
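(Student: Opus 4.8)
The plan is to mirror the unraveling used for Lemma~\ref{L:ExtensionUniqueness}, keeping careful track of which data in a generating triple $(g_k, G_{k-1}, G_k)$ satisfying (swI)--(swIII) is forced by $G_k$ alone and which is forced by the extra choice of the determining edge $[d^a_k, d_{k,l}]$. First I would pin down the fold. Since $G_k$ is a fixed indexed pair-labeled $(r;(\tfrac32-r))$ ltt structure, Notation~\ref{N:ltt} already extracts from $G_k$ its red vertex $d^u_k = d_{k,j_k}$ (hence the index $j_k$), its red edge $e^R_k = [d^u_k, \overline{d^a_k}]$, the twice-achieved black edge, and the latter's second endpoint $d^a_k = d_{k,i_k}$ (hence $i_k$), with $e_{k,i_k} \neq (e_{k,j_k})^{\pm1}$ since $e^R_k$ genuinely joins two distinct vertices. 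By (gtI), $g_k$ is then forced to be $g_k\colon e_{k-1,j_k} \mapsto e_{k,i_k}e_{k,j_k}$ and $e_{k-1,t}\mapsto e_{k,t}$ otherwise, which is exactly the homotopy equivalence of part~II; in particular $g_k$ is independent of the determining edge, so the only remaining freedom in the triple lies in $G_{k-1}$.

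Next I would reconstruct $G_{k-1}$ and match it to steps~(1)--(5) of part~I. By (gtII) the vertex set of $G_{k-1}$, and its black edges between inverse-pair labeled vertices, are forced, giving steps~(2) and~(5). By (swII) the unique red vertex of $G_{k-1}$ is $d^u_{k-1}=d^{pa}_{k-1}=d_{k-1,i_k}$, and by (swIII) $\overline{d^a_{k-1}}=d_{k-1,l}$, so the unique red edge is forced to be $[d_{k-1,i_k},d_{k-1,l}]$; this is steps~(3)--(4). Finally, (swI) prescribes that $D^C(g_k)$ restricts to the label isomorphism $d_{k-1,j_k}\mapsto d^a_k = d_{k,i_k}$, $d_{k-1,t}\mapsto d_{k,t}$ ($t\neq j_k$), of $\mathcal{PI}(G_{k-1})$ onto $\mathcal{PI}(G_k)$; since this isomorphism is fixed, the edge set of $\mathcal{PI}(G_{k-1})$ is forced (it is the image of that of $\mathcal{PI}(G_k)$ under the inverse label bijection), giving step~(1). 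Hence at most one $G_{k-1}$ can occur, and it is precisely the output of steps~(1)--(5).

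It then remains to verify that the triple produced by steps~(1)--(5) is genuinely a generating triple, i.e. that it satisfies (gtI)--(gtIII): (gtI) is the fold of part~II; for (gtII) one checks [ltt1]--[ltt3] and [ltt(*)4] for the reconstructed $G_{k-1}$ --- its purple subgraph is $\mathcal{G}$ by construction, every purple vertex still meets its black edge, and the reinserted red edge $[d_{k-1,i_k},d_{k-1,l}]$ creates no second colored edge on a vertex pair because its red endpoint $d_{k-1,i_k}$ (the relabeled, now-recolored former purple vertex $d^a_k$) carries no purple edge, all purple edges formerly at $d^a_k$ having been transferred to $d_{k-1,j_k}$ by (swI); and for (gtIII) one notes that $D^C(g_k)$ sends purple edges of $G_{k-1}$ to purple edges of $G_k$ and the red edge to the purple determining edge $[d^a_k, d_{k,l}]$ (both $d_{k-1,i_k}$ and $d_{k-1,j_k}$ mapping to $d^a_k$ under $Dg_k$), while $D^T(g_k)$ is well defined because $g_k(e_{k-1,j_k})=e_{k,i_k}e_{k,j_k}$ realizes only the turn $t^R_k=\{\overline{d^a_k},d^u_k\}$, whose edge $e^R_k$ is colored, so it lifts to the smooth path $[e_{k,i_k}]\,[\overline{d^a_k},d^u_k]\,[e_{k,j_k}]$ in $G_k$. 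I expect this last verification --- confirming that the reconstructed $G_{k-1}$ really is an $(r;(\tfrac32-r))$ ltt structure (the [ltt3] check, which when $d^a_k$ lies on several purple edges needs the ``preliminary step'' transfer of Section~\ref{S:Peels}, together with the exclusion of degenerate determining edges using admissibility of $G_k$) and that $D^T(g_k)$ exists --- to be the only non-formal part; the uniqueness itself is a formal consequence of (swI)--(swIII). A complete treatment is given in \cite{p12a}.
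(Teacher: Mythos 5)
Your proposal is correct and matches the paper's approach exactly: the paper's own proof is a one-liner ("an unraveling of definitions," deferring the full presentation to \cite{p12a}), and what you have written is precisely that unraveling — reading $j_k$ and $i_k$ off $G_k$ to pin down $g_k$, using (swI)--(swIII) together with (gtII) to force every piece of $G_{k-1}$ and match the five reconstruction steps, and then checking the reconstructed triple is a genuine generating triple. Your minor worries about [ltt3] and "degenerate determining edges" are in fact vacuous (the vertex map in (swI) is a bijection so simplicity of $\mathcal{PI}(G_{k-1})$ is automatic, and the red endpoint $d_{k-1,i_k}$ can carry no purple edge by [ltt2]), but flagging them does no harm.
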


\begin{proof} The proof is an unraveling of definitions. A full presentation can be found in \cite{p12a}. \qedhere
\end{proof}

\vskip10pt

Recall (Proposition \ref{P:am}) that each triple in an ideal decomposition satisfies the AM properties. Thus, to construct a diagram realizing any ideally decomposed $(r;(\frac{3}{2}-r))$-potential representative with ideal Whitehead graph $\mathcal{G}$, we want edges of the diagram to correspond to triples satisfying the AM properties. Proposition \ref{P:ExtensionsSwitches} tells us each such a triple is either an admissible switch or admissible extension.

\begin{prop}{\label{P:ExtensionsSwitches}} Suppose $(g_k, G_{k-1}, G_k)$ is a triple for $\mathcal{G}$ such that: \newline
\indent 1. $\mathcal{G} \in \mathcal{PI}_{(r;(\frac{3}{2}-r))}$ and \newline
\indent 2. $G_i$ is an indexed pair-labeled $(r;(\frac{3}{2}-r))$ ltt structure for $\mathcal{G}$ with base graph $\Gamma_i$, for $i=k,k-1$. \newline
\noindent Then $(g_k, G_{k-1}, G_k)$ satisfies AM properties I-VII if and only if it is either an admissible switch or an admissible extension. \newline
\indent In particular, in the circumstance where $d^u_{k-1}= d^{pa}_{k-1}$, the triple is a switch and, in the circumstance where $d^u_{k-1}= d^{pu}_{k-1}$, the triple is an extension.
\end{prop}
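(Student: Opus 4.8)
The plan is to prove both implications by matching each Admissible Map property of the triple against the defining conditions of an extension (Definition~\ref{D:Extension}) and a switch (Definition~\ref{D:Switch}), and then pinning down the triple itself via the uniqueness statements, Lemmas~\ref{L:ExtensionUniqueness} and~\ref{L:SwitchUniqueness}. I would dispatch the ``if'' direction first, since it is essentially bookkeeping. Suppose $(g_k, G_{k-1}, G_k)$ is an admissible extension or switch determined by a purple edge $[d^a_k, d_{k,l}]$ of $G_k$. Then AM property I is the birecurrency of $G_{k-1}$ and $G_k$, built into ``admissible''; AM property II is condition (extII) (resp.\ (swII)); AM property III holds because in both constructions the unique red vertex of $G_{k-1}$ is $d^u_{k-1}$ and its unique red edge is $e^R_{k-1}=[d^u_{k-1},\overline{d^a_{k-1}}]=[t^R_{k-1}]$; AM property V follows because $d^u_{k-1}$, being the unique red vertex of $G_{k-1}$, lies on no purple edge (axiom ltt2), so $[t^R_{k-1}]$ is the only colored edge through it; AM property VI is the fold description in part~II of Lemma~\ref{L:ExtensionUniqueness} (resp.\ \ref{L:SwitchUniqueness}); AM property VII is the defining isomorphism $\mathcal{PI}(G_{k-1})\to\mathcal{PI}(G_k)$ of (extI) (resp.\ (swI)); and AM property IV follows because that isomorphism sends the purple edges of $G_{k-1}$ to purple edges of $G_k$, while the single red edge $e^R_{k-1}$ is sent to $[Dg_k(d^u_{k-1}),Dg_k(\overline{d^a_{k-1}})]=[d^a_k,d_{k,l}]$, which is purple by hypothesis.

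For the ``only if'' direction I would argue as follows. Assume $(g_k, G_{k-1}, G_k)$ satisfies AM properties I--VII. AM property I makes $G_{k-1}$ and $G_k$ admissible. By Definition~\ref{D:Triple}, $g_k$ is already a proper full fold of roses, and AM property VI identifies it as the fold $e^{pu}_{k-1}\mapsto e^a_k e^u_k$ with $e_{k-1,t}\mapsto e_{k,t}$ for $t\neq j_k$; hence $Dg_k$ is index-preserving on every vertex of $G_{k-1}$ except $d_{k-1,j_k}=d^{pu}_{k-1}$, and $Dg_k(d^{pu}_{k-1})=Dg_k(d^{pa}_{k-1})=d^a_k=d_{k,i_k}$. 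AM property VII then upgrades $(g_k,G_{k-1},G_k)$ to a generating triple. Put $d:=Dg_k(\overline{d^a_{k-1}})$, where $\overline{d^a_{k-1}}$ is the purple endpoint of the red edge $e^R_{k-1}$ of $G_{k-1}$ (purple by axiom ltt(*)4, since there is a unique red vertex). Since $e^R_{k-1}$ is a colored edge of $G_{k-1}$, AM property IV forces its image $[Dg_k(d^u_{k-1}),d]=[d^a_k,d]$ (using $Dg_k(d^u_{k-1})=d^a_k$, which holds in either case below) to be a purple edge of $G_k$, so $[d^a_k,d]$ is a legitimate determining edge.

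Now AM property II says the illegal turn $T_{k-1}=\{d^{pa}_{k-1},d^{pu}_{k-1}\}$ contains $d^u_{k-1}$, so either $d^u_{k-1}=d^{pu}_{k-1}$ or $d^u_{k-1}=d^{pa}_{k-1}$. In the first case I claim the triple is the extension determined by $[d^a_k,d]$: condition (extII) is exactly $d^u_{k-1}=d^{pu}_{k-1}$; writing $[d^a_k,d]=[d^a_k,d_{k,l}]$, condition (extIII), namely $\overline{d^a_{k-1}}=d_{k-1,l}$, holds because $\overline{d^a_{k-1}}$ is purple and $Dg_k$ is index-preserving on purple vertices; and (extI), that $D^T(g_k)$ restricted to $\mathcal{PI}(G_{k-1})$ is $d_{k-1,j}\mapsto d_{k,j}$, is precisely that index-preserving behavior, valid since by AM property III the only non-index-preserving vertex $d^{pu}_{k-1}=d_{k-1,j_k}$ is the red vertex and hence not purple. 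Lemma~\ref{L:ExtensionUniqueness} then identifies $(g_k,G_{k-1},G_k)$ with the unique extension determined by $[d^a_k,d]$, and AM property I makes it admissible. In the second case $d^u_{k-1}=d^{pa}_{k-1}$, and the identical comparison with Definition~\ref{D:Switch} applies: here $d^{pu}_{k-1}=d_{k-1,j_k}$ is the exceptional purple vertex, sent by $Dg_k$ to $d^a_k=d_{k,i_k}$, matching (swI), while (swII) and (swIII) are checked as above; Lemma~\ref{L:SwitchUniqueness} then identifies the triple with the admissible switch determined by $[d^a_k,d]$. This simultaneously establishes the ``in particular'' clause.

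The step I expect to be the main obstacle is this forward bookkeeping: verifying that the index behavior of $D^T(g_k)$ on $\mathcal{PI}(G_{k-1})$ forced jointly by AM properties VI and III is exactly (extI) or (swI), and, more delicately, that the purple edge $[d^a_k,d]$ used as the determining edge is well defined and genuinely purple — which is precisely where AM property IV, applied to the red edge of $G_{k-1}$, is indispensable. Everything else is unwinding the definitions of extensions, switches, and generating triples, together with the ltt-structure axioms ltt2 and ltt(*)4.
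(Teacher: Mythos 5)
Your proof is correct and follows essentially the same route as the paper's: both directions are handled by matching each AM property against the defining conditions (extI)--(extIII), (swI)--(swIII), and (gtI)--(gtIII), using AM VI to pin down the fold, AM VII to get the $\mathcal{PI}$-isomorphism, AM IV applied to the red edge to identify the determining purple edge $[d^a_k,d]$, and AM II--III to split into the extension ($d^u_{k-1}=d^{pu}_{k-1}$) and switch ($d^u_{k-1}=d^{pa}_{k-1}$) cases. The only tiny slip is in the ``if'' direction for AM V, where you cite only ltt2: that rules out purple edges at $d^u_{k-1}$, but uniqueness of the colored edge through $d^u_{k-1}$ also needs ltt(*)4 (unique red edge), which you should mention alongside it.
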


\begin{proof} For the forward direction, assume $(g_k, G_{k-1}, G_k)$ satisfies AM properties I-VII and (1)-(2) in the proposition statement. We show the triple is either a switch or an extension (AM property I give birecurrency). Assumption (1) in the proposition statement implies (gtII).

By AM property VI, $g_k$ is defined by $g_k(e^{pu}_{k-1})=e^a_k e^u_k$ and $g_k(e_{k-1,i})=e_{k,i}$ for $e_{k-1,i} \neq (e^{pu}_{k-1})^{\pm 1}$, $D_0(e^u_k)= d^u_k$, $D_0(\overline{e^a_k})= \overline{d^a_k}$, and $e^{pu}_{k-1}= e_{(k-1,j)}$, where $e^u_k=e_{k,j}$. We have (gtI).

By AM property VII, $Dg_k$ induces on isomorphism from $SW(G_{k-1})$ to $SW(G_k)$. Since the only direction whose second index is not fixed by $Dg_k$ is $d^{pu}_{k-1}$, the only vertex label of $SW(G_{k-1})$ not determined by this isomorphism is the preimage of $d^a_k$ (which AM property IV dictates to be either $d^{pu}_{k-1}$ or $d^{pa}_{k-1}$).  When the preimage is $d^{pa}_{k-1}$, this gives (extI). When the preimage is $d^{pu}_{k-1}$, this gives (swI). For the isomorphism to extend linearly over edges, we need that images of edges in $G_{k-1}$ are edges in $G_k$, i.e. $[Dg_k(d_{k-1,i}), Dg_k(d_{k-1,j})]$ is an edge in $G_k$ for each edge $[d_{(k-1,i)}, d_{(k-1,j)}]$ in $G_{k-1}$. This follows from AM property IV. We have (gtIII).

AM property II gives either $d^u_{k-1}= d^{pa}_{k-1}$ or $d^u_{k-1}=d^{pu}_{k-1}$. In the switch case, the above arguments imply $d^{pu}_{k-1}$ labels a purple vertex. So $d^u_{k-1}=d^{pa}_{k-1}$ (since AM property III tells us $d^u_{k-1}$ is red). This gives (swII) once one appropriately coordinates notation. In the extension case, the above arguments give instead that $d^{pa}_{k-1}$ labels a purple vertex, meaning $d^u_{k-1}=d^{pu}_{k-1}$ (again since AM property III tells us $d^u_{k-1}$ is red). This gives us (extII). We are left with (extIII) and (swIII). What we need is that $[d^a_k, d_{k,l}]$ is a purple edge in $G_k$ where $\overline{d^a_{k-1}}= d_{k-1,l}$.

By AM property V, $G_{k-1}$ has a single red edge $[t^R_{k-1}] = [\overline{d^a_{k-1}}, d^u_{k-1}]$. By AM property IV, $D^Cg_k([t^R_{k-1}])$ is in $\mathcal{PI}(G_k)$. First consider what we established is the switch case, i.e. assume $d^u_{k-1}=d^{pa}_{k-1}$.  The goal is to determine $[t^R_{k-1}]= [d_{(k-1,i_k)}, d_{(k-1,l)}]$, where $d^a_k=d_{k,i_k}$ ($d_{k-1,i_k}=d^{pa}_{k-1}$) and $[d^a_k, d_{k,l}]$ is in $\mathcal{PI}(G_k)$ (making $(g_k, G_{k-1}, G_k)$ the switch determined by $[d^a_k, d_{k,l}]$). Since $d^u_{k-1}=d^{pa}_{k-1}$, we know $[t^R_{k-1}]= [\overline{d^a_{k-1}}, d^u_{k-1} ]=[\overline{d^a_{k-1}}, d^{pa}_{k-1}]$.  We know $\overline{d^a_{k-1}} \neq d^{pa}_{k-1}$ (since (tt2) implies $\overline{d^a_{k-1}} \neq d^u_{k-1}$, which equals $d^{pa}_{k-1}$). Thus, AM property VI says $D^Cg_k([t^R_{k-1}]) = D^Cg_k([\overline{d^a_{k-1}}, d^{pa}_{k-1}]) = [d_{(k,l)}, d^{a}_{k}]$ where $\overline{d^a_{k-1}}=e_{k-1,l}$. So $[d_{(k,l)}, d^{a}_{k}]$ is in $\mathcal{PI}(G_k)$. We thus have (swIII). Now consider what we established is the extension case, i.e. assume $d^u_{k-1}= d^{pu}_{k-1}$. We need $[t^R_{k-1}]=[d_{(k-1,j_k)}, d_{(k-1,l)}]$, where $d^u_{k-1}=d_{k-1,j_k}$ and $[d^a_k, d_{k,l}]$ is in $\mathcal{PI}(G_k)$ (making $(g_k, G_{k-1}, G_k)$ the extension determined by $[d^a_k, d_{k,l}]$). Since $d^u_{k-1}= d^{pu}_{k-1}$, we know $[t^R_{k-1}]= [\overline{d^a_{k-1}}, d^u_{k-1} ]= [\overline{d^a_{k-1}}, d^{pu}_{k-1}]$. We know $\overline{d^a_{k-1}} \neq d^{pu}_{k-1}$ (since (tt2) implies $\overline{d^a_{k-1}} \neq d^u_{k-1}$, which equals $d^{pu}_{k-1}$). Thus, by AM property VI, $D^Cg_k([t^R_{k-1}]) = D^Cg_k([\overline{d^a_{k-1}}, d^{pu}_{k-1}]) = [d_{(k,l)}, d^{a}_{k}]$, where $\overline{d^a_{k-1}}= e_{k-1,l}$. We have (extIII) and the forward direction.

For the converse, assume $(g_k, G_{k-1}, G_k)$ is either an admissible switch or extension. Since we required extensions and switches be admissible, $G_{k-1}$ and $G_k$ are birecurrent. We have AM property I.

The first and second parts of AM property II are equivalent and the second part holds by (extII) for an extension and (swII) for a switch. For AM property III note that there is only a single red vertex (labeled $d^u_k$) in $G_k$ and is only a single red vertex (labeled $d^u_{k-1}$) in $G_{k-1}$ because of the requirement in (gtII) that $G_k$ and $G_{k-1}$ are $(r;(\frac{3}{2}-r))$ ltt structures (see the standard notation for why this is notationally consistent with the AM properties). What is left of AM property III is that the edge $[t^R_k]= [d^u_k, \overline{d^a_k}]$ in $G_k$ and the edge $[t^R_{k-1}]= [d^u_{k-1}, \overline{d^a_{k-1}}]$ in $G_{k-1}$ are both red. This follows from (gtI) combined with (extII) for an extension and (swII) for a switch.

(gtIII) implies AM property IV. For AM property V, note: AM property III implies $e^R_k$ is a red edge containing the red vertex $d^u_k$. (ltt(*)4) implies the uniqueness of both the red edge and direction.

Since AM property VI follows from (gtI), combined with (extII) for an extension and (swII) for a switch, and AM property VII follows from (gtIII), we have proved the converse.  \qedhere
\end{proof}

\vskip8pt

\begin{df}
In light of Proposition \ref{P:ExtensionsSwitches}, an \emph{admissible map} will mean a triple for a $\mathcal{G} \in \mathcal{PI}_{(r;(\frac{3}{2}-r))}$ that is an admissible switch or admissible extension or (equivalently) satisfies AM properties I-VII.
\end{df}

\section{Ideal decomposition ($\mathcal{ID}$) diagrams}{\label{Ch:AMDiagrams}}

\emph{\textbf{Throughout this section $\mathcal{G} \in \mathcal{PI}_{(r;(\frac{3}{2}-r))}$.}} We define the ``ideal decomposition ($\mathcal{ID}$) diagram'' for $\mathcal{G}$, as well as prove that representatives with $(r;(\frac{3}{2}-r))$ potential are realized as loops in these diagrams. We use $\mathcal{ID}$ diagrams to prove Theorem \ref{T:MainTheorem}B and to construct examples in \cite{p12d}.

\begin{df} A \emph{preliminary ideal decomposition diagram for $\mathcal{G}$} is the directed graph where
~\\
\vspace{-5.8mm}
\begin{enumerate}[itemsep=-1.5pt,parsep=3pt,topsep=3pt]
\item the nodes correspond to equivalence classes of admissible indexed pair-labeled $(r;(\frac{3}{2}-r))$ ltt structures for $\mathcal{G}$ and \\[-5.5mm]
\item for each equivalence class of an admissible generator triple ($g_i$, $G_{i-1}$, $G_i$) for $\mathcal{G}$, there exists a directed edge $E(g_i, G_{i-1}, G_i)$ from the node [$G_{i-1}$] to the node [$G_i$].
\end{enumerate}
The disjoint union of the maximal strongly connected subgraphs of the preliminary ideal decomposition diagram for $\mathcal{G}$ will be called the \emph{ideal decomposition ($\mathcal{ID}$) diagram for $\mathcal{G}$} (or \emph{$\mathcal{ID}(\mathcal{G})$}).
\end{df}

\begin{rk} \cite{p12a} gives a procedure for constructing $\mathcal{ID}$ diagrams (there called ``AM Diagrams'').
\end{rk}

\indent We say an ideal decomposition $\Gamma_0 \xrightarrow{g_1} \Gamma_1 \xrightarrow{g_2} \cdots \xrightarrow{g_{k-1}}\Gamma_{k-1} \xrightarrow{g_k} \Gamma_k$ of a tt $g$ with indexed $(r;(\frac{3}{2}-r))$ ltt structures $G_0 \to G_1 \to \cdots \to G_{k-1} \to G_k$ for $\mathcal{G}$ is \emph{realized} by $E(g_1, G_{0}, G_1) * \dots * E(g_k, G_{k-1}, G_k)$ in $\mathcal{ID}(\mathcal{G})$ if the oriented path $E(g_1, G_{0}, G_1) * \dots * E(g_k, G_{k-1}, G_k)$ in $\mathcal{ID}(\mathcal{G})$ from [$G_0$] to [$G_k$], traversing the $E(g_i, G_{i-1}, G_i)$ in order of increasing $i$ (from $E(g_1, G_{0}, G_1)$ to $E(g_k, G_{k-1}, G_k)$), exists.

\begin{prop}{\label{P:ReferenceLoop}} If $g=g_{k} \circ \cdots \circ g_1$, with ltt structures $G_0 \to G_1 \to \cdots \to G_{k-1} \to G_k$, is an ideally decomposed representative of $\phi \in Out(F_r)$, with $(r;(\frac{3}{2}-r))$ potential, such that $\mathcal{IW}(\phi)=\mathcal{G}$, then $E(g_1, G_{0}, G_1) * \dots * E(g_k, G_{k-1}, G_k)$ exists in $\mathcal{ID}(\mathcal{G})$ and forms an oriented loop. \end{prop}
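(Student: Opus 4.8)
The plan is to assemble the machinery already developed: Proposition~\ref{P:am} shows every triple in an ideal decomposition satisfies the AM properties, Proposition~\ref{P:ExtensionsSwitches} upgrades this to ``admissible switch or admissible extension,'' and the cyclic nature of an ideal decomposition closes the resulting path into a loop, which then automatically descends into the strongly connected part of the preliminary diagram.

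First I would verify that each $G_j = G(f_j)$ (notation as in~\ref{N:IdealDecompositions}, with $f_j = g_j \circ \cdots \circ g_1 \circ g_k \circ \cdots \circ g_{j+1}$) is an admissible indexed pair-labeled $(r;(\frac{3}{2}-r))$ ltt structure for $\mathcal{G}$. By Lemma~\ref{L:fk}a and Lemma~\ref{L:pNpFreePreserved}, each $f_j$ is a pNp-free train track representative of $\phi$ with $(r;(\frac{3}{2}-r))$ potential, so $\mathcal{IW}(f_j) \cong \mathcal{IW}(\phi) \cong \mathcal{G}$; Lemma~\ref{L:PF} then gives that $G_j$ is an $(r;(\frac{3}{2}-r))$ ltt structure for $\mathcal{G}$, and AM Property~I of Proposition~\ref{P:am} (supplied by the Birecurrency Condition, Proposition~\ref{P:BC}) gives that $G_j$ is birecurrent, hence admissible. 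So each $[G_j]$ is a node of the preliminary ideal decomposition diagram for $\mathcal{G}$.

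Next, for each $1 \leq i \leq k$, Proposition~\ref{P:am} gives that $(g_i, G_{i-1}, G_i)$ satisfies AM Properties~I--VII, so Proposition~\ref{P:ExtensionsSwitches} identifies it as an admissible switch or an admissible extension --- in particular an admissible generating triple --- for $\mathcal{G}$. By the definition of the preliminary ideal decomposition diagram there is thus a directed edge $E(g_i, G_{i-1}, G_i)$ from $[G_{i-1}]$ to $[G_i]$, and since the terminal node of the $i$-th such edge is the initial node of the $(i+1)$-st, the concatenation $E(g_1, G_0, G_1) * \cdots * E(g_k, G_{k-1}, G_k)$ is a genuine oriented path in the preliminary diagram. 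Because an ideal decomposition is cyclic --- $\Gamma_0 = \Gamma_k = \Gamma$, and under the $\mathbf{Z}/k\mathbf{Z}$ indexing of Proposition~\ref{P:IdealDecomposition} one has $f_0 = f_k = g$, hence $G_0 = G(f_0) = G(f_k) = G_k$ --- we get $[G_0] = [G_k]$, so this oriented path is an oriented loop.

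Finally I would descend from the preliminary diagram to $\mathcal{ID}(\mathcal{G})$: the nodes $[G_0], \dots, [G_{k-1}]$ together with the edges $E(g_i, G_{i-1}, G_i)$ form a subgraph in which every node is reachable from every other by traveling around the loop, so this subgraph is strongly connected and hence lies inside a maximal strongly connected subgraph of the preliminary diagram, i.e.\ inside $\mathcal{ID}(\mathcal{G})$. Therefore all the edges $E(g_i, G_{i-1}, G_i)$ and all the nodes $[G_i]$ survive into $\mathcal{ID}(\mathcal{G})$, and the oriented loop exists there, as claimed. I expect the only non-routine point to be the first step --- checking that the $G_j$ really are admissible $(r;(\frac{3}{2}-r))$ ltt structures for one and the same graph $\mathcal{G}$ --- which is precisely where invariance of the ideal Whitehead graph along an ideal decomposition (Lemmas~\ref{L:pNpFreePreserved}, \ref{L:fk}, \ref{L:PF}) and birecurrency of ltt structures (Proposition~\ref{P:BC}) are used; once that is in hand, closing the path into a loop and passing to the strongly connected part are immediate.
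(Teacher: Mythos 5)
Your proposal is correct and follows the same route as the paper, which simply cites Propositions~\ref{P:am} and~\ref{P:ExtensionsSwitches}; you have merely unpacked that citation by checking that each $G_j$ is a node of the preliminary diagram (via Lemmas~\ref{L:fk}, \ref{L:pNpFreePreserved}, \ref{L:PF}, and Proposition~\ref{P:BC}), that each triple gives an edge, that $G_0 = G_k$ closes the path into a loop, and that a loop automatically lies in a maximal strongly connected component.
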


\begin{proof} This follows from Proposition \ref{P:ExtensionsSwitches} and Proposition \ref{P:am}.  \qedhere
\end{proof}

\vskip5pt

\begin{cor}{\label{C:ReferenceLoop}} \textbf{(of Proposition \ref{P:ReferenceLoop})} If no loop in $\mathcal{ID}(\mathcal{G})$ gives a potentially-$(r;(\frac{3}{2}-r))$ representative of a $\phi \in Out(F_r)$ with $\mathcal{IW}(\phi) = \mathcal{G}$, such a $\phi$ does not exist. In particular, any of the following $\mathcal{ID}(\mathcal{G})$ properties would prove such a representative does not exist:
~\\
\vspace{-5.6mm}
\begin{enumerate}[itemsep=-1.5pt,parsep=3pt,topsep=3pt]
\item For at least one edge pair $\{d_i, \overline{d_i}\}$, where $e_i \in \mathcal{E}(\Gamma)$, no red vertex in $\mathcal{ID}(\mathcal{G})$ is labeled by $d_i^{\pm 1}$.
\item The representative corresponding to each loop in $\mathcal{ID}(\mathcal{G})$ has a pNp.
\end{enumerate}
\end{cor}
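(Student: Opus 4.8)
The plan is to read this off from Propositions \ref{P:IdealDecomposition}, \ref{P:ReferenceLoop}, and \ref{P:am}; the corollary is essentially bookkeeping on top of them. For the first assertion I would argue by contraposition: assume some $\phi \in Out(F_r)$ has $\mathcal{IW}(\phi) \cong \mathcal{G}$. Since $\mathcal{IW}(\phi)$ is defined, $\phi$ is fully irreducible, and since $\mathcal{G} \in \mathcal{PI}_{(r;(\frac{3}{2}-r))}$ is connected with $2r-1$ vertices, $\phi$ is moreover ageometric (as explained in the introduction), so $\phi \in \mathcal{AFI}_r$. Proposition \ref{P:IdealDecomposition} then produces a power $\psi = \phi^R$ with an ideally decomposed, rotationless, pNp-free representative $g = g_n \circ \cdots \circ g_1$ of $\psi$ on the rose and with $\mathcal{IW}(\psi) \cong \mathcal{IW}(\phi) \cong \mathcal{G}$; since $g$ is ideally decomposed, $\psi \in \mathcal{AFI}_r$, and $\mathcal{IW}(\psi) \in \mathcal{PI}_{(r;(\frac{3}{2}-r))}$, it has $(r;(\frac{3}{2}-r))$ potential. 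By Lemma \ref{L:PF} each $G_k = G(f_k)$ is an $(r;(\frac{3}{2}-r))$ ltt structure for $\mathcal{G}$, so Proposition \ref{P:ReferenceLoop} applies and yields an oriented loop $E(g_1, G_0, G_1) * \cdots * E(g_n, G_{n-1}, G_n)$ in $\mathcal{ID}(\mathcal{G})$ whose associated composed representative is (equivalent to) $g$. That contradicts the hypothesis that no loop gives an $(r;(\frac{3}{2}-r))$-potential representative of a $\phi'$ with $\mathcal{IW}(\phi') \cong \mathcal{G}$, so no such $\phi$ exists.

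For the ``in particular'' clauses I would show each listed property forces that hypothesis, reusing the loop-realizing-$g$ argument. For (2): the representative $g$ furnished by Proposition \ref{P:IdealDecomposition} is pNp-free, whereas (2) says the representative associated to every loop in $\mathcal{ID}(\mathcal{G})$ has a periodic Nielsen path; hence no loop can realize $g$, and by the previous paragraph no such $\phi$ exists. For (1): let $\{d_i, \overline{d_i}\}$ be an edge pair that labels no red vertex of $\mathcal{ID}(\mathcal{G})$, with $j \in \{1,\dots,r\}$ its index, and apply AM Property VIII(a) of Proposition \ref{P:am} to the decomposition of $g$: some $e^u_k$ equals $E_{k,j}$ or $\overline{E_{k,j}}$, so the red vertex of $G_k$, which is labeled $d^u_k = D_0(e^u_k)$, carries a label in $\{d_j, \overline{d_j}\}$; since $[G_k]$ is a node of the loop it is a node of $\mathcal{ID}(\mathcal{G})$, so that diagram does have a red vertex labeled $d_j^{\pm 1}$ --- contradicting (1).

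I do not anticipate a genuine obstacle, since the substantive work is already contained in Propositions \ref{P:IdealDecomposition}, \ref{P:ReferenceLoop}, and \ref{P:am}. The only points that need a moment of care are: confirming that passing from $\phi$ to the power $\psi = \phi^R$ preserves both ``$\mathcal{IW}(\psi) \cong \mathcal{G}$'' and ``$\psi$ has $(r;(\frac{3}{2}-r))$ potential'' (immediate from Proposition \ref{P:IdealDecomposition} and its proof, where $\mathcal{IW}(\phi^R) \cong \mathcal{IW}(\phi)$ is used), and confirming that the ltt structures $G_k$ appearing in Proposition \ref{P:ReferenceLoop} are exactly the nodes traversed by the realizing loop, so that red-vertex labels occurring among the $G_k$ are genuinely labels occurring in $\mathcal{ID}(\mathcal{G})$ (immediate from the definitions of $\mathcal{ID}(\mathcal{G})$ and of ``realized'').
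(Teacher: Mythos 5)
Your proposal is correct and follows exactly the route the paper intends: Proposition \ref{P:IdealDecomposition} to get an ideally decomposed $(r;(\frac{3}{2}-r))$-potential representative of a power, Proposition \ref{P:ReferenceLoop} to realize it as a loop in $\mathcal{ID}(\mathcal{G})$, pNp-freeness for item (2), and AM Property VIII(a) of Proposition \ref{P:am} for item (1) (this is precisely the paper's Irreducibility Potential Test). The two points you flag for care — that $\mathcal{IW}(\phi^R)\cong\mathcal{IW}(\phi)$ and that the realizing loop's nodes are nodes of $\mathcal{ID}(\mathcal{G})$ — are handled exactly as you say.
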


\noindent As a result of Corollary \ref{C:ReferenceLoop}(1) we define:

\begin{df}
\textbf{Irreducibility Potential Test:} Check whether, in each connected component of $\mathcal{ID}(\mathcal{G})$, for each edge vertex pair $\{d_i, \overline{d_i}\}$, there is a node $N$ in the component such that either $d_i$ or $\overline{d_i}$ labels the red vertex in the structure $N$. If it holds for no component, $\mathcal{G}$ is unachieved.
\end{df}

\begin{rk}
Let $\{x_1, \overline{x_1}, \dots, x_{2r}, \overline{x_{2r}}\}$ be a rank-r edge pair labeling set. We call a permutation of the indices $1 \leq i \leq 2r$ combined with a permutation of the elements of each pair $\{x_i, \overline{x_i}\}$ an \emph{Edge Pair (EP) Permutation}. Edge-indexed graphs will be considered \emph{Edge Pair Permutation (EPP) isomorphic} if there is an EP permutation making the labelings identical (this still holds even if only a subset of $\{x_1, \overline{x_1}, \dots, x_{2r}, \overline{x_{2r}}\}$ is used to label the vertices, as with a graph in $\mathcal{PI}_{(r;(\frac{3}{2}-r))}$).

When checking for irreducibility, it is only necessary to look at one EPP isomorphism class of each component (where two components are in the same class if one can be obtained from the other by applying the same EPP isomorphism to each triple in the component).
\end{rk}

\section{Several unachieved ideal Whitehead graphs}{\label{Ch:UnachievableGraphs}}

\begin{thm}{\label{T:MainTheorem}} For each $r \geq 3$, let $\mathcal{G}_r$ be the graph consisting of $2r-2$ edges adjoined at a single vertex.
~\\
\vspace{-\baselineskip}
\begin{description}
\item [A.] For no fully irreducible $\phi \in Out(F_r)$ is $\mathcal{IW}(\phi) \cong \mathcal{G}_r$. \\[-5.5mm]
\item [B.] The following connected graphs are not the ideal Whitehead graph $\mathcal{IW}(\phi)$ for any fully irreducible $\phi \in Out(F_3)$: \\[-6.5mm]

\centering
\includegraphics[width=2.6in]{UnachievableGraphs.eps}
\end{description} \end{thm}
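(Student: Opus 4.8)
The two parts use the two tools developed above: part A via the Birecurrency Condition and part B via the ideal decomposition diagrams. For part~A, suppose for contradiction that some $\phi\in\mathcal{FI}_r$ had $\mathcal{IW}(\phi)\cong\mathcal{G}_r$. Since $\mathcal{G}_r$ is connected with $2r-1$ vertices it contributes index $1-\frac{2r-1}{2}=\frac{3}{2}-r>1-r$, so by \cite{gjll} (together with \cite{bf94}) $\phi$ is neither geometric nor parageometric; hence $\phi\in\mathcal{AFI}_r$ and $\mathcal{G}_r\in\mathcal{PI}_{(r;(\frac{3}{2}-r))}$. Proposition~\ref{P:IdealDecomposition} then produces a power $\psi=\phi^R$ with a pNp-free, ideally decomposed representative $g$ on the rose; Lemma~\ref{L:PF} makes $G(g)$ an $(r;(\frac{3}{2}-r))$ ltt structure with $\mathcal{PI}(G(g))\cong\mathcal{G}_r$; and Proposition~\ref{P:BC} (equivalently AM Property~I of Proposition~\ref{P:am}) forces $G(g)$ to be birecurrent. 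So it suffices to prove the purely combinatorial statement: \emph{no $(r;(\frac{3}{2}-r))$ ltt structure $G$ with $\mathcal{PI}(G)\cong\mathcal{G}_r$ is birecurrent.}

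To prove that claim I would track a hypothetical birecurrent smooth line as an alternating walk, using that after each colored edge the line is forced onto the unique black edge at the vertex it has just reached. Here $\mathcal{C}(G)$ is the star $\mathcal{G}_r$ (center $c$, $2r-2$ leaves) with the single red edge $e^R=[d^u,\overline{d^a}]$ attached; since $\mathcal{G}_r$ is a tree, $\mathcal{C}(G)$ is a tree on $2r$ vertices, and the black edges form the inverse-pairing perfect matching. The key point is that $c$ is incident only to colored edges running to leaves, and every leaf other than $\overline{d^a}$ has a single colored edge (to $c$); chasing the forced alternations shows that any line which ever crosses a ``peripheral'' colored edge $[c,\ell]$ (with $\ell$ a leaf, $\ell\notin\{d^a,\overline{d^a}\}$) is, within a bounded number of steps, driven into a short cycle built from $e^R$, the twice-achieved edge, a colored edge at $d^a$, and the black edge through $d^u$ (with possibly one additional ``detour'' edge), and thereafter never escapes it. Since for $r\ge3$ this trapped region omits $[c,\ell]$, the line crosses $[c,\ell]$ only finitely often, contradicting birecurrency. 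A short case split according to whether $\overline{d^u}$ and $\overline{d^a}$ are the center or a leaf of the star covers all configurations; I expect the only real annoyance to be making the ``bounded number of steps'' uniform across the cases — this is the combinatorial content alluded to in the remark that combinatorial proofs of Theorem~\ref{T:MainTheorem}A exist.

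For part~B, a fully irreducible $\phi$ with $\mathcal{IW}(\phi)\cong\mathcal{G}$ for one of the displayed graphs would, by Proposition~\ref{P:IdealDecomposition} and Proposition~\ref{P:ReferenceLoop}, yield an oriented loop in $\mathcal{ID}(\mathcal{G})$ realizing an ideally decomposed, pNp-free, irreducible representative of a power of $\phi$. So for each such $\mathcal{G}$ I would: enumerate the finitely many admissible indexed pair-labeled $(r;(\frac{3}{2}-r))$ ltt structures with purple subgraph $\mathcal{G}$ (the nodes), working up to the EPP-isomorphism reduction; use Lemmas~\ref{L:ExtensionUniqueness} and \ref{L:SwitchUniqueness} — each determining edge in a node gives a unique extension and a unique switch — to list all admissible generating triples as directed edges; pass to the maximal strongly connected subgraphs to obtain $\mathcal{ID}(\mathcal{G})$; and then invoke Corollary~\ref{C:ReferenceLoop}: either the Irreducibility Potential Test fails, i.e.\ in each component some edge pair $\{d_i,\overline{d_i}\}$ never labels a red vertex (so by Corollary~\ref{C:ReferenceLoop}(1) no loop realizes an irreducible representative), or else every loop's representative carries a periodic Nielsen path (Corollary~\ref{C:ReferenceLoop}(2)); in either case $\mathcal{G}$ is unachieved. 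The main obstacle for part~B is not conceptual but the finite yet substantial bookkeeping of constructing these diagrams for each graph and reading off the obstruction; I would expect at least one of the graphs to be ruled out cleanly by the Irreducibility Potential Test.
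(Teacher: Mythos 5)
Your strategy matches the paper's: Part A via the Birecurrency Condition (Propositions \ref{P:IdealDecomposition}, \ref{P:BC}, and Lemma \ref{L:PF} reduce to showing no admissible $(r;(\tfrac{3}{2}-r))$ ltt structure for $\mathcal{G}_r$ is birecurrent), and Part B via the ideal decomposition diagrams and Corollary \ref{C:ReferenceLoop}. For Part A your ``trap'' picture is the right one: once a smooth line crosses a colored edge from the star's center $c$ to a peripheral leaf $\ell$, the forced alternation pushes it (in both senses) into a short smooth cycle on $\{d^u, \overline{d^u}, d^a, \overline{d^a}\}$ from which it cannot escape, so $[c,\ell]$ is crossed at most once. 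The paper organizes the casework a bit differently --- it reduces up to EPP-isomorphism to two explicit structures (depending on whether $d^u$ is paired with the center or a leaf, and where the red edge attaches) and dispatches each --- but your case split on the positions of $\overline{d^u}$ and $\overline{d^a}$ lands in the same place. You do concede that the full casework isn't written out, and indeed that finite verification is the remaining content of Part A; note also that the ``combinatorial proofs'' the paper alludes to are an entirely different (non-BC) line of argument, not the case analysis you are sketching.

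The genuine gap is Part B, where what you write is a procedure rather than a proof. The substance of the paper's argument there is the explicit construction of $\mathcal{ID}(\mathcal{G})$ for the two non-$\mathcal{G}_3$ graphs (the leftmost displayed graph is $\mathcal{G}_3$ and falls under A) and the observation that, in every component, some edge pair never appears as a red-vertex label --- i.e.\ the Irreducibility Potential Test fails. Writing ``either the Irreducibility Potential Test fails $\dots$ or else every loop's representative carries a pNp; in either case $\mathcal{G}$ is unachieved'' reads as a dichotomy, but Corollary \ref{C:ReferenceLoop} only gives two \emph{sufficient} conditions; neither is guaranteed to hold, and one must actually be verified for each graph. ``I would expect at least one of the graphs to be ruled out cleanly'' is a hope, not an argument. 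Until the diagrams are built and the obstruction read off (both non-$\mathcal{G}_3$ graphs in fact fail the Irreducibility Potential Test), Part B remains unproved.
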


\begin{proof} We first prove (A). By Proposition \ref{P:BC}, it suffices to show that no admissible $(r;(\frac{3}{2}-r))$ ltt structure for $\mathcal{G}$ is birecurrent. Up to EPP-isomorphism, there are two such ltt structures to consider, neither birecurrent):
~\\
\vspace{-9.25mm}
\begin{figure}[H]
\centering
 \includegraphics[width=2in]{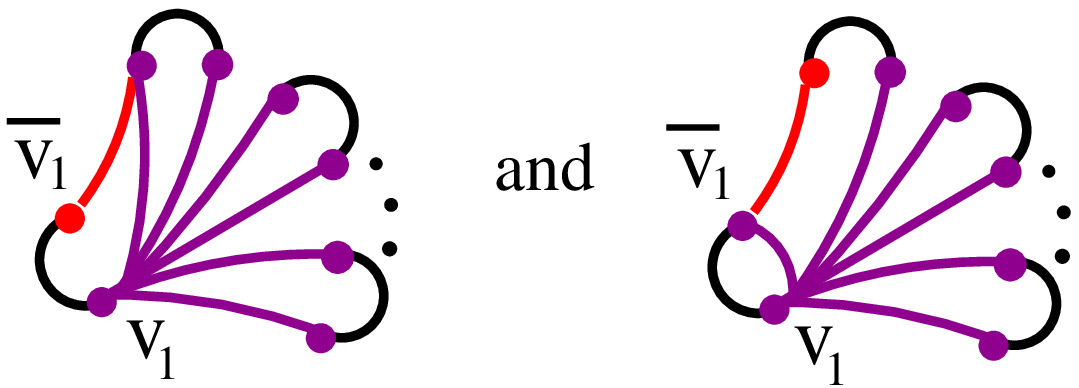}
\label{fig:NotBirecurrent} \\[-4.5mm]
\end{figure}

\noindent These are the only structures worth considering as follows: Call the valence-($2r-2$) vertex $v_1$. Either (1) some valence-1 vertex is labeled by $\overline{v_1}$ or (2) the set of valence-$1$ vertices $\{x_1, \overline{x_1}, \dots, x_{r-1}, \overline{x_{r-1}}\}$ consists of $r-1$ edge-pairs. Suppose (2) holds. The red edge cannot be attached in such a way that it is labeled with an edge-pair or is a loop and attaching it to any other vertex yields an EPP-isomorphic ltt structure to that on the left. Suppose (1) holds. Let $x_i$ label the red vertex. The valence-$1$ vertex labels will be $\{\overline{v_1}, x_2, \overline{x_2}, \dots, x_{i-1}, \overline{x_{i-1}}, \overline{x_i}, x_{i+1}, \overline{x_{i+1}}, \overline{x_i} \dots, x_{r}, \overline{x_{r}}\}$. The red edge cannot be attached at $\overline{x_i}$. So either it will be attached at $v_1$, $\overline{v_1}$, or some $x_j$ with $x_j \neq x_i^{\pm 1}$. Unless it is attached at $\overline{v_1}$, $\overline{v_1}$ is a valence-$1$ vertex of [$v_1, \overline{v_1}$] in the local Whitehead graph, making $[v_1, \overline{v_1}]$ an edge only traversable once by a smooth line. If the red edge is attached at $\overline{v_1}$, we have the structure on the right.

We prove (B). The left graph is covered by A. The following is a representative of the EPP isomorphism class of the only significant component of $\mathcal{ID}(\mathcal{G})$ where $\mathcal{G}$ is the right-most structure:
~\\
\vspace{-6.5mm}
\begin{figure}[H]
\centering
\includegraphics[width=3.7in]{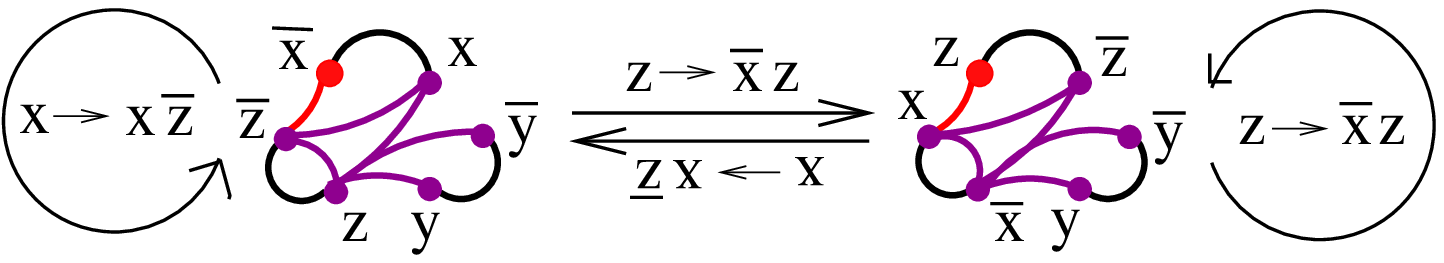}
\label{fig:IllustrativeAMDiagram} \\[-5mm]
\end{figure}

\smallskip

\indent Since $\mathcal{ID}(\mathcal{G})$ contains only red vertices labeled $z$ and $\bar{x}$ (leaving out $\{y, \overline{y}\}$), unless some other component contains all 3 edge vertex pairs ($\{x, \overline{x}\}$, $\{y, \overline{y}\}$, and $\{z, \overline{z}\}$), the middle graph would be unachieved. Since no other component does contain all 3 edge vertex pairs as vertex labels (all components are EPP-isomorphic), the middle graph is indeed unachieved.

Again, for the right-hand, the $\mathcal{ID}$ Diagram lacks irreducibility potential. A component of the $\mathcal{ID}$ diagram is given below (all components are EPP-isomorphic). The only edge pairs labeling red vertices of this component are $\{x, \overline{x} \}$ and $\{z, \overline{z}\}$:
~\\
\vspace{-6mm}
\begin{figure}[H]
\centering
\includegraphics[width=4in]{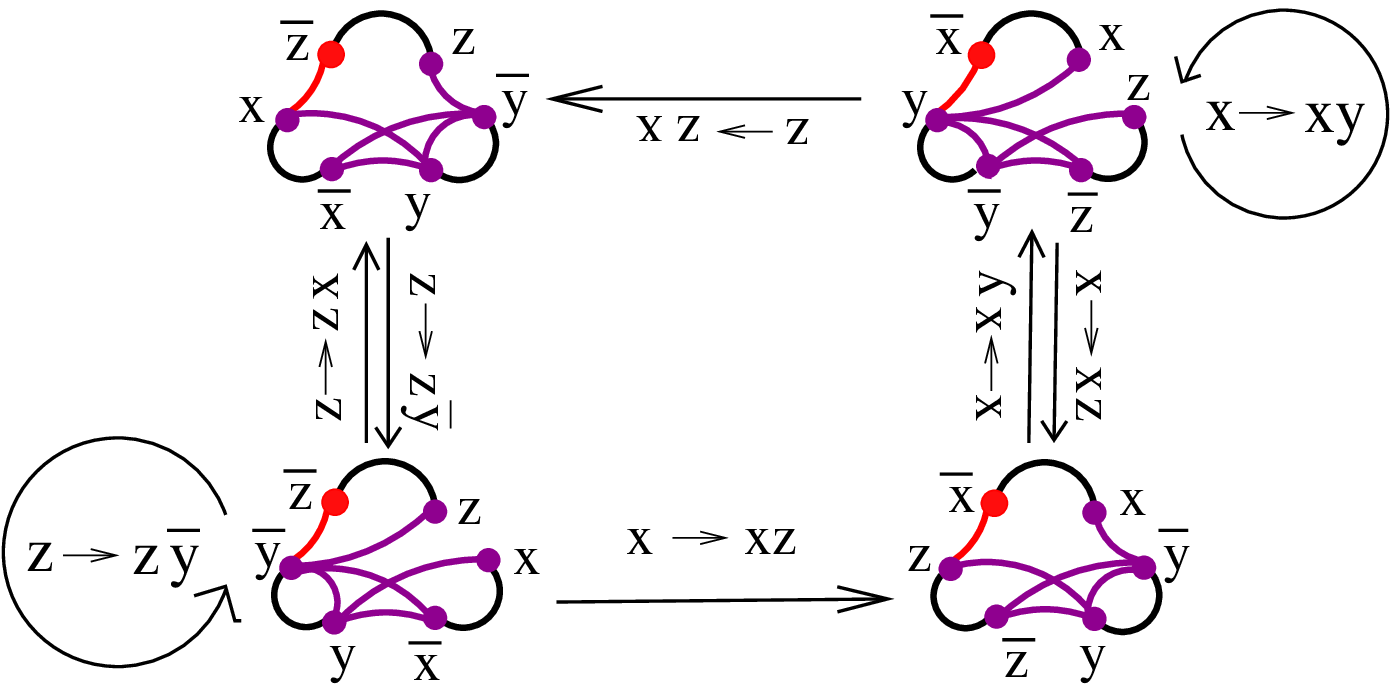}
\label{fig:IllustrativeAMDExample}
\end{figure}
\qedhere
\end{proof}

\newpage

\bibliographystyle{amsalpha}
\bibliography{BirecurrencyCondition}

\end{document}